\newtheoremstyle{mystyle_standard}{}{}{}{}{\bfseries}{.}{ }{\thmnumber{#2}.\,\thmname{#1}\thmnote{ (#3)}}
\theoremstyle{mystyle_standard}
\newtheorem{defn}{Definition}[section]
\newtheoremstyle{mystyle_italics}{}{}{\itshape}{}{\bfseries}{.}{ }{\thmnumber{#2}.\,\thmname{#1}\thmnote{ (#3)}}
\theoremstyle{mystyle_italics}
\newtheorem{thm}[defn]{Theorem}
\newtheorem{lem}[defn]{Lemma}
\newtheorem{prop}[defn]{Proposition}
\newtheorem{cor}[defn]{Corollary}
\newtheoremstyle{xxitstyle}{}{}{}{}{\bfseries}{.}{ }{\thmnumber{#2}.\!\!\thmname{#1}\thmnote{ (#3)}}
\newtheoremstyle{mystyle_tt}{}{}{}{}{\itshape}{.}{ }{\thmnumber{#2}.\,\thmname{#1}\thmnote{ (#3)}}
\theoremstyle{mystyle_tt}
\newtheorem{rem}[defn]{Remark}
\theoremstyle{definition}
\newtheorem*{thmnonumber}{Theorem}
\newtheorem*{cornonumber}{Corollary}
\DeclareMathOperator{\dist}{dist}
\DeclareMathOperator{\vol}{vol}
\DeclareMathOperator{\Osc}{Osc}
\DeclareMathOperator{\diam}{diam}
\DeclareMathOperator{\var}{var}
\DeclareMathOperator{\len}{len}
\DeclareMathOperator{\inn}{int}
\DeclareMathOperator{\Cube}{Cube}
\DeclareMathOperator{\BV}{BV}
\DeclareMathOperator{\essinf}{ess\,inf}
\newcommand{\neig}{M}
\newcommand{\e}{\epsilon}
\newcommand{\Rohde}{R(p)}
\newcommand{\Kochp}{K(p)}
\newcommand{\Koch}{K}
\renewcommand{\epsilon}{\varepsilon}
\numberwithin{equation}{section}
\begin{document}
\title{On bounds for the remainder term of counting functions of the Neumann Laplacian on domains with fractal boundary}

\author[1]{S. Kombrink\footnote{\url{s.kombrink@bham.ac.uk}}}
\author[1]{L. Schmidt\footnote{\url{l.schmidt@bham.ac.uk}\par~\  The second author was supported by EPSRC DTP and the University of Birmingham.}}

\affil[1]{School of Mathematics, University of Birmingham, Edgbaston, Birmingham, B15\,2TT, UK}
\makeatletter
\newcommand{\subjclass}[2][2020]{%
  \let\@oldtitle\@title%
  \gdef\@title{\@oldtitle\footnotetext{#1 \emph{Mathematics subject classification.} #2}}%
}
\newcommand{\keywords}[1]{%
  \let\@@oldtitle\@title%
  \gdef\@title{\@@oldtitle\footnotetext{\emph{Key words and phrases.} #1.}}%
}
\makeatother
\subjclass{\emph{Primary:} 28A80, \emph{secondary:} 35J20, 35P20}
\keywords{Eigenvalue counting function, fractal boundary, Rohde snowflakes}
\date{}
\maketitle
\vspace{-0.5cm}
\begin{abstract}
 We provide a new constructive method for obtaining explicit remainder estimates of eigenvalue counting functions of Neumann Laplacians on domains with fractal boundary. This is done by establishing estimates for first non-trivial eigenvalues through Rayleigh quotients. 
A main focus lies on domains whose boundary can locally be represented as a limit set of an IFS, with the classic Koch snowflake and certain Rohde snowflakes being prototypical examples, to which the new method is applied. 
 Central to our approach is  the construction of a novel foliation of the domain near its boundary.
 
\end{abstract}
\section{Introduction}
\paragraph{Historic remarks:} In the celebrated 1966 article \cite{kac1966}, Kac asked which geometric information can be inferred from the Laplace spectrum of a domain coining the famous question: \emph{Can one hear the shape of a drum?} This inverse problem, i.\,e.\ computing the \textquotedblleft{}drum\textquotedblright{} given its spectrum, was shown to be ultimately unsolvable in 1992 by Gordon, Webb and Wolpert \cite{gordonwebbwolpert1992} for domains with piecewise smooth boundary and subsequently in 2000 by Chen and Sleemann \cite{chenslee2000} for domains with fractal boundary. 
The focus of the present article lies on the complementary forward problem, namely to decipher the influence of a domain's geometry on its Laplace spectrum. This forward problem has attracted an immense amount of attention throughout the years (see for example \cite{ahrend2009,ivrii2019I} for a summary). Since the Laplace eigenvalue equation can be interpreted physically as the wave equation of a function that is periodic in time, an eigenvalue $\lambda$ can be understood as acoustic frequency of a wave mode. Heuristically, higher harmonics correspond to higher order eigenfunctions displaying similar geometric aspects of their domain. As a consequence, it is particularly interesting how the geometry of a domain dictates the asymptotic behaviour of eigenvalues of the Laplace operator. The idea goes back to 1911 when Weyl established a remarkable law for the eigenvalue counting function $N_D(\Omega,t)$ (i.\,e.\ the number of eigenvalues $\leq t$ for the Dirichlet Laplace problem counted with multiplicity) for open and bounded $\Omega \subset \mathbb{R}^2$ in \cite{weyl1911}, which had a tremendous impact in mathematics:
\begin{align}\label{eq:Weyl_2D}
 N_D(\Omega,t) = \frac{\vol_2(\Omega)}{4\pi}t + o(t)\,, \qquad \text{ as }t \to \infty.
\end{align}
In higher dimensions, the  classic Weyl law for an open $\Omega \subset \mathbb{R}^n$
\begin{align}
 N_D(\Omega,t) = C_W ^{(n)} \vol_n(\Omega) t^{n/2} + o\left(t^{n/2}\right)\,, \qquad \text{ as }t \to \infty \label{eq:weylclassic}
\end{align}
with $C_W ^{(n)}:= 2^{-n}\pi^{-n/2}/\Gamma(1+n/2)$
was shown to hold 
under the minimal hypothesis that $\vol_n(\Omega) < \infty$ in \cite{rozenbljum1972,rozenbljum1976}.  
Based on his result \eqref{eq:Weyl_2D}, Weyl conjectured that for sufficiently regular domains $\Omega \subset \mathbb{R}^n$, the asymptotic behaviour is given by
\begin{align}
 N_D(\Omega,t) 
 = C_W ^{(n)} \vol_n(\Omega) t^{n/2} 
 - \frac{1}{4}C_W ^{(n-1)} \vol_{n-1}(\partial \Omega)t^{(n-1)/2} + o\left( t^{(n-1)/2} \right) \text{ as }t \to \infty. \label{eq:weylconjecture}
\end{align}
A similar conjecture has been formulated for the Neumann counting function; it differs from the Dirichlet-case only by the sign of the second term. These conjectures can be motivated by Weyl's proof of approximating a planar domain $\Omega$ with unions of squares, which suggests that correction terms to the leading order are dictated by the deviation of $\Omega$ from the union of squares and thus by the geometry of the boundary $\partial\Omega$. 
Supposing that $\partial \Omega$ is smooth and additionally assuming that Billiard trajectories are almost never periodic, the Weyl conjecture was proved by Ivrii in 1980 (see \cite{ivrii1980,vassilievsafarov1996,safarovvassiliev1996} and the references therein) for several boundary conditions including Dirichlet and Neumann type. \par
Regularity assumptions in the Weyl conjecture are essential. Indeed, 
if the boundary is irregular such a result 
cannot be expected and is wrong in general. In particular, the remainder term in \eqref{eq:weylconjecture} 
is meaningless when the boundary shows fractal structure with Hausdorff-dimension $\dim_H \partial\Omega > n - 1$ as then $\vol_{n-1}\partial \Omega = \infty$. 
Notice that under Ivrii's conditions, the exponent in the second term coincides with half the topological dimension $\dim (\partial \Omega)/2 = (\dim(\Omega)-1)/2$ of the boundary $\partial\Omega$ 
and that the associated coefficient 
is linked to the domain's surface area.
It is therefore natural to expect analogue substitutes of these quantities in case of non-integer dimensions. \par
Motivated by physical observations, in a first step in this direction, Berry conjectured that the remainder term would be linked to the Hausdorff dimension and the Hausdorff measure of the boundary in \cite{berry1979,berry1980} (see also \cite{la1993}). However, this quickly turned out to be false as a number of counterexamples were found in \cite{brca1986,lapo1993} leading to the formulation of the modified Weyl-Berry conjecture in \cite{la1991}. The modified Weyl-Berry conjecture links the power law of the remainder term in the eigenvalue counting function to the Minkowski dimension and its coefficient to the Minkowski content (see Sec.~\ref{sec:defminkdim}).
 In case of Dirichlet boundary conditions, Lapidus and Pomerance verified the modified Weyl-Berry conjecture under the conditions that $n=1$ and that  $\partial\Omega$ is Minkowski measurable with Minkowski dimension $\delta \in (0,1)$ and Minkowski content $\mathcal M_{\delta}(\partial\Omega)\in(0,\infty)$ by proving $N_D(\Omega,t)=C_W ^{(1)}\vol_1(\Omega) t^{1/2}-c_{\delta}\mathcal M_{\delta}(\partial \Omega)t^{\delta/2}+o(t^{\delta/2})$ as $t\to\infty$. Moreover, they showed that the analogue is incorrect in higher dimensions in general \cite{lapo1993,lapo1996}. Here, $c_{\delta}$ denotes a constant only depending on $\delta$. 
 In arbitrary dimensions $n$, Lapidus showed an asymptotic law
 $N_N(\Omega,t) = C_W ^{(n)}\vol_n(\Omega) t^{n/2} + \mathcal{O}\left(t^{\delta/2}\right)$
 for domains $\Omega$ with an extension property which ensures that the essential spectrum is empty, and under the conditions that $\delta\in(n-1,n]$ and that the upper Minkowski content of $\partial \Omega$ exists in \cite{la1991}.
 In contrast, the existence of the upper inner Minkowski content was shown to be sufficient for estimates of the Dirichlet counting function in the same article.
 Of particular relevance to the present article is the work \cite{NetrusovSafarov2005}, where
 an exact rather than an asymptotic bound on $N_N(\Omega,t)-C_W ^{(n)}\vol_n(\Omega) t^{n/2}$ was obtained for domains whose boundary locally is a graph of a $BV$-function.
\paragraph{Overview of main results:} Motivated by the fact that among counting functions for mixed boundary conditions the Neumann case provides an upper bound, the present article focuses on remainder estimates of Neumann eigenvalue counting functions $N_N(\Omega,t)$.
To obtain these, we introduce the concept of well-covered domains (see Def.~\ref{def:wellcovered}). For such domains $\Omega$, a
main result, stated in the below theorem provides 
an explicit upper bound of $N_N(\Omega,t)-C_W ^{(n)}\vol_n (\Omega) t^{n/2}$ for all $t\geq t_0$ for a determined $t_0$.
\begin{thmnonumber}[Thm.~\ref{thm:satz1}]
Let $\Omega \subset \mathbb{R}^n$ be a domain whose boundary $\partial\Omega$ has  
upper inner Minkowski dimension $\delta > n-1$ (see Sec.~\ref{sec:defminkdim}). Suppose that $\Omega$ is well-covered (see Def.~\ref{def:wellcovered}), meaning that for all sufficiently small $\epsilon>0$ the set $\Omega_{-\epsilon}:=\{ x\in \Omega\ :\ \dist(x,\partial\Omega)< \epsilon\}$ admits a cover by domains with uniformly comparable diameter that satisfies the following. (i) The cardinality of the cover scales like $\epsilon^{-\delta}$ and (ii) each covering domain admits a well-behaved foliation (see Def.~\ref{def:wellfoliated}). 
Then there is an explicitly determined $M_{\Omega}$ and $t_0\in\mathbb R$ such that
 \begin{align}
    N_N(\Omega,t) - C_W ^{(n)} \vol_n(\Omega)t^{n/2} \leq   M_{\Omega} t^{\delta/2}\label{eq:intro:glsatz1}
\end{align}
for all $t\geq t_0$.
\end{thmnonumber}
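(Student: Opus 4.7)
The plan is a two-stage Dirichlet--Neumann bracketing combined with Rayleigh-quotient estimates on the covering domains provided by the hypothesised foliation. Fix a scale $\epsilon>0$, to be chosen $\asymp t^{-1/2}$ at the end. The standard monotonicity of Neumann counting functions under partitioning gives
\begin{align*}
N_N(\Omega,t) \leq N_N\bigl(\Omega\setminus\overline{\Omega_{-\epsilon}},t\bigr) + N_N(\Omega_{-\epsilon},t).
\end{align*}
The first term describes an interior core at distance at least $\epsilon$ from $\partial\Omega$; for this region a classical Weyl estimate, for example the Netrusov--Safarov bound cited in the introduction applied to the $\epsilon$-level set of $\dist(\cdot,\partial\Omega)$, yields
\begin{align*}
N_N\bigl(\Omega\setminus\overline{\Omega_{-\epsilon}},t\bigr) \leq C_W^{(n)}\vol_n\bigl(\Omega\setminus\overline{\Omega_{-\epsilon}}\bigr)t^{n/2} + R_1(\epsilon,t),
\end{align*}
where $R_1$ is controlled by $\epsilon^{n-1-\delta}t^{(n-1)/2}$ (using that the $(n-1)$-measure of the $\epsilon$-level set grows at most like $\epsilon^{n-1-\delta}$), which evaluates to $t^{\delta/2}$ at the optimal scale.

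For the collar $\Omega_{-\epsilon}$, I would extract from the well-cover a partition (up to null sets) $U_1,\dots,U_N$ with $N\leq C_\Omega\epsilon^{-\delta}$ and uniformly comparable diameters $\diam U_i\asymp\epsilon$. A second application of bracketing yields $N_N(\Omega_{-\epsilon},t)\leq\sum_{i=1}^{N}N_N(U_i,t)$. The well-behaved foliation of each $U_i$ is the tool needed to bound the individual terms: using the leaves as trial directions in the variational characterisation, one extracts a lower bound of order $c\epsilon^{-2}$ for the first non-trivial Neumann eigenvalue, and, via a separation-of-variables argument along the foliation, a uniform upper bound
\begin{align*}
N_N(U_i,t) \leq C_W^{(n)}\vol_n(U_i)\,t^{n/2} + C_1
\end{align*}
valid for all $t\geq c\epsilon^{-2}$ with constants independent of $i$. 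Summing over $i$ then gives $N_N(\Omega_{-\epsilon},t)\leq C_W^{(n)}\vol_n(\Omega_{-\epsilon})\,t^{n/2}+C_1 C_\Omega\epsilon^{-\delta}$.

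Combining the two estimates and using $\vol_n(\Omega\setminus\overline{\Omega_{-\epsilon}})+\vol_n(\Omega_{-\epsilon})=\vol_n(\Omega)$ yields
\begin{align*}
N_N(\Omega,t) - C_W^{(n)}\vol_n(\Omega)\,t^{n/2} \leq R_1(\epsilon,t) + C_1 C_\Omega\epsilon^{-\delta}.
\end{align*}
Setting $\epsilon = c_0 t^{-1/2}$, with $c_0$ chosen large enough to meet the threshold $t\geq c\epsilon^{-2}$, converts both terms on the right-hand side into quantities of order $t^{\delta/2}$; the hypothesis $\delta>n-1$ is what ensures the interior remainder is absorbed. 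The explicit values of $M_\Omega$ and $t_0$ can then be read off from the constants arising in the well-cover, the foliation, and the interior Weyl bound. The main obstacle will be the uniform Weyl-type bound on each $U_i$ with constants independent of $i$: this is precisely what the well-behaved foliation is engineered to deliver, by reducing the $n$-dimensional spectral problem on the potentially irregular $U_i$ to tractable one-dimensional problems along the leaves. Verifying that the resulting constants do not degenerate as the foliation geometry becomes more intricate near $\partial\Omega$ is where most of the technical work will be concentrated.
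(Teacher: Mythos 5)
Your outer architecture matches the paper's: split into the collar $\Omega_{-\epsilon}$ and the interior $\Omega\setminus\overline{\Omega_{-\epsilon}}$, use the well-foliated cover to show that each collar piece has first non-trivial Neumann eigenvalue $\gtrsim\epsilon^{-2}$ so the collar contributes only $O(\epsilon^{-\delta})$ eigenvalues below the threshold, apply a Weyl estimate to the interior, and optimise $\epsilon\asymp t^{-1/2}$. (The paper doesn't even need your uniform two-term bound $N_N(U_i,t)\leq C_W^{(n)}\vol_n(U_i)t^{n/2}+C_1$ for all $t\geq c\epsilon^{-2}$; it simply notes that at the chosen scale each $D_i^\epsilon$ contributes exactly $1$ via $\lambda_2^N(D_i^\epsilon)\geq C_1(\Omega_{-\epsilon})\epsilon^{-2}$, together with the multiplicity version of Dirichlet--Neumann bracketing. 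Your partition claim should be replaced by Prop.~\ref{prop:bracketing} since the cover has multiplicity $\mu$, not $1$.)

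The genuine gap is in your treatment of the interior region. You invoke ``a classical Weyl estimate, for example the Netrusov--Safarov bound \ldots applied to the $\epsilon$-level set of $\dist(\cdot,\partial\Omega)$'' and assert that the remainder is $\epsilon^{n-1-\delta}t^{(n-1)/2}$. Neither half of this is available off the shelf. The domain $\Omega\setminus\overline{\Omega_{-\epsilon}}$ is bounded by the level set $\{\dist(\cdot,\partial\Omega)=\epsilon\}$; for a fractal $\partial\Omega$ this level set need not be locally the graph of a $BV$ function (the Netrusov--Safarov hypothesis) and need not be smooth or have non-periodic billiards (the Ivrii/Vassiliev hypotheses for a two-term Weyl law), because the medial axis of a fractal set can be wild. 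Moreover, you have no a priori control on $\vol_{n-1}$ of that level set. This is exactly why the paper replaces $\Omega\setminus\overline{\Omega_{-\epsilon}}$ by the union of Whitney cubes $\overline{\bigcup_{Q\in\mathcal{W}_\epsilon}Q}$: Prop.~\ref{prop:whitneyumfang} then gives the explicit perimeter bound $A_\Omega\epsilon^{n-1-\delta}$ (derived from Prop.~\ref{prop:whitneycardinality} and the Minkowski-content assumption, not from an assumption on the level set), and Prop.~\ref{prop:whitneybillard} guarantees almost-never-periodic billiards so the sharp two-term Weyl law for polygons applies. Without this Whitney-cube regularisation step, your interior estimate $R_1(\epsilon,t)$ is unsupported, and with it the explicit $M_\Omega$ and $t_0$ the theorem asks for cannot be produced.
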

Examples of domains with fractal boundary to which the above theorem applies are Koch and certain Rohde snowflakes (see Sec.~\ref{subsec:application}). Rhode snowflakes are particularly important domains as, up to bi-Lipschitz transformation, every planar quasidisk (i.\,e.\ a domain that is bounded by the image of a circle under a quasi-conformal map) can be realised as such a snowflake (see \cite{rohde2001}).
In this context, it is worth noting that bi-Lipschitz maps preserve upper inner Minkowski dimension, the property of being well-covered and hence all prerequisites of Thm.~\ref{thm:satz1}.
Another significant subclass of domains $\Omega$ for which Thm.~\ref{thm:satz1} holds is the class of domains whose boundary locally is a graph of a $BV$-function (see Sec.~\ref{subsec:bdvar}). For this subclass an estimate of the form \eqref{eq:intro:glsatz1} is presented in \cite{NetrusovSafarov2005}. Indeed, the work of \cite{NetrusovSafarov2005} gave the motivation for developing the novel concept of well-covered domains in order to achieve the estimate \eqref{eq:intro:glsatz1} for snowflake domains. 
For showing that a snowflake domain is well-covered we develop a new way of explicitly constructing a  foliation of the domains covering the inner $\epsilon$-neighbourhood $\Omega_{-\epsilon}$, 
see Sec.~\ref{subsec:foliation}, addressing (ii) in the above theorem. Note that the existence of the upper Minkowski content is not necessary in Thm.~\ref{thm:satz1}, but that the assumptions of Thm.~\ref{thm:satz1} imply the existence of the upper inner Minkowski content.\par 
Under the assumptions of Thm.~\ref{thm:satz1}, we can also treat the case $\delta=n-1$, in which we obtain $N_N(\Omega,t) - C_W ^{(n)} \vol_n(\Omega)t^{n/2} =\mathcal O \left( t^{(n-1)/2}\right)$ as $t\to\infty$. This addresses a question in \cite{la1993}, where the weaker estimate $N_N(\Omega,t) - C_W ^{(n)} \vol_n(\Omega)t^{n/2} =\mathcal O \left( t^{(n-1)/2}\log(t)\right)$ is given, see Rem.~\ref{rem:countingfunctions}\ref{item:allelambda}.
\begin{rem}\label{rem:spray}
Having the estimate \eqref{eq:intro:glsatz1} for all $t\geq t_0$ with explicitly determined $M_{\Omega}$ and $t_0$ rather than asymptotically is key for obtaining Dirichlet Weyl laws for self-similar sprays $U$, whose generator is a finite union of pairwise disjoint well-covered domains. (Loosely, a self-similar spray is a countable union of scaled copies of its generator.)
Indeed, in the companion paper \cite{DOKUMENT2} by the authors, Thm.~\ref{thm:satz1}, in conjunction with an estimate by van den Berg and Lianantonakis from \cite{vandenBerg2001}, is used as a pivotal tool to achieve the asymptotics
\begin{equation}
    N_D(U,t)
    = C_W ^{(2)} \vol_2(U)t + \sum_{j\in J} A_j(t) t^j + \mathcal O(t^{\frac{\log 4}{2\log 3}}) 
    \label{eq:companion}
\end{equation}
for a family of self-similar sprays $U$ whose generator is a finite union of pairwise disjoint Koch snowflakes. Here, the $A_j$ are bounded and $J$ is of finite cardinality. 
Cor.~\ref{cor:RundQ}, which allows to combine Thm.~\ref{thm:satz1} with the estimate from \cite{vandenBerg2001}, hinges on common conditions for the existence of estimates of Neumann and Dirichlet counting functions.
Depending on the structure of $U$, the asymptotic \eqref{eq:companion} can exhibit several asymptotic terms of faster growth than the error term $O(t^{\frac{\log 4}{2\log 3}})$. Explicit examples for $U$ for which \eqref{eq:companion} gives two, three and four summands of faster growth than $t^{\frac{\log 4}{2\log 3}}$ are presented in \cite{DOKUMENT2}, and hints given for which sprays Thm.~\ref{thm:satz1} leads to even more asymptotic terms of faster growth than the error term.
Notably, the exponents in \eqref{eq:companion}, i.\,e.\ the elements of $J$, are shown to be in one-to-one correspondence with the exponents in the volume expansion of $\e\mapsto\vol_2\left( U_{-\e} \right)$, providing valuable insights as to how the fine geometric structure of a self-similar spray influences the distribution of the Dirichlet Laplace eigenvalues.
\end{rem}
The rough outline for obtaining the explicit bound on $N_N(\Omega,t)$ for well-covered domains $\Omega$ in Thm.~\ref{thm:satz1} is to partition $\Omega$ into $\Omega_{-\epsilon}$ and $\Omega\setminus \Omega_{-\epsilon}$.
We use a Whitney cover of $\Omega$ and restrict it to a cover of $\Omega \backslash \Omega_{-\epsilon}$ by Whitney cubes $\{Q\,:\, Q\in \mathcal{W}_{\e}\}$. For this cover we show that $\vol_{n-1}\left(\partial\overline{\bigcup_{Q \in \mathcal{W}_{\e} } Q }\right)=\mathcal{O}\left(\epsilon^{(n-1)-\delta}\right)$ as $\e\to 0$ (see Prop.~\ref{prop:whitneycardinality}). This is used for an asymptotic law of $N_N\left( \overline{\bigcup_{Q \in \mathcal{W}_\epsilon} Q} , t \right)$.
For handling $\Omega_{-\epsilon}$ we use that $\Omega$ is well-covered, denoting the covering domains of $\Omega_{-\e}$ by $D_i^{\e}$. The well-behaved foliation of $D_i^{\e}$ is used to show a Poincar\'{e}-Wirtinger-like inequality for all non-constant $u \in H^1(D_i^{\e})$ by separating $D_i^{\e} = E \cup D_i^{\e}\backslash E$ where $E$ has a known first non-trivial eigenvalue $\lambda_2 ^N(E)$. The inverse of the first non-trivial eigenvalue of a domain $D_i^{\e}$ with respect to Neumann boundary conditions, denoted by $\lambda_2 ^N(D_i^{\e})$, is the best Poincar\'{e} constant for non-constant functions $u \in H^1(D_i^{\e})$. We use a version of the variational technique introduced in \cite{NetrusovSafarov2005} and thus avoid the explicit need for existence of bounded extensions $W^{1,2}(\Omega) \hookrightarrow W^{1,2}(\mathbb{R}^n)$ of the Sobolev spaces involved.
 \begin{cornonumber}[cf.\ Cor.~\ref{cor:ewestimate}]
 Let $\Omega$ be well-covered and let $\{D^\epsilon _i \}_{i \in I_\epsilon}$ be a cover of $\Omega_{-\epsilon}$ consisting of well-foliated domains as in Def.~\ref{def:wellcovered}. Then by Lem.~\ref{lem:ewestimate}, there exists $C_1(\Omega_{-\epsilon})$ with $\inf_{\epsilon>0} C_1(\Omega_{-\epsilon})>0$ such that
 \begin{align*}
  \lambda_2 ^N(D^\epsilon _i) \geq C_1(\Omega_{-\epsilon}) \epsilon^{-2}.
 \end{align*}
\end{cornonumber}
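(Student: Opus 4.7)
The plan is to deduce the corollary directly from Lem.~\ref{lem:ewestimate} by exploiting the two quantitative features embedded in the definition of a well-covered domain (Def.~\ref{def:wellcovered}): (i) the uniform comparability of the diameters of the covering domains $D_i^\epsilon$ to $\epsilon$, and (ii) the existence of a well-behaved foliation on each $D_i^\epsilon$ with parameters that do not degenerate as $\epsilon \to 0$. Since Lem.~\ref{lem:ewestimate} already produces a Poincaré--Wirtinger-type lower bound $\lambda_2^N(D)\geq c(D)\,\diam(D)^{-2}$ for an individual well-foliated domain $D$, the corollary reduces to controlling $c(D)$ and $\diam(D)$ uniformly over the family $\{D_i^\epsilon\}_{i\in I_\epsilon,\,\epsilon>0}$.

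First, I would unpack the well-covered hypothesis to extract constants $0<a\leq b<\infty$, independent of $i$ and $\epsilon$, such that $a\epsilon\leq\diam(D_i^\epsilon)\leq b\epsilon$ for all sufficiently small $\epsilon>0$. Inserting this into the conclusion of Lem.~\ref{lem:ewestimate} immediately yields an inequality of the form
\begin{equation*}
 \lambda_2^N(D_i^\epsilon) \;\geq\; \frac{c(D_i^\epsilon)}{b^2}\,\epsilon^{-2},
\end{equation*}
so the task is reduced to showing that $\inf_{\epsilon>0,\,i\in I_\epsilon} c(D_i^\epsilon) > 0$. Setting $C_1(\Omega_{-\epsilon}) := b^{-2}\inf_{i\in I_\epsilon} c(D_i^\epsilon)$ and then taking the infimum over $\epsilon$ gives the desired quantity.

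Next, I would trace through Lem.~\ref{lem:ewestimate} to identify precisely which data of the foliation the constant $c(D)$ depends on; by inspection of the construction sketched in the introduction (separating $D = E \cup (D\setminus E)$ with known $\lambda_2^N(E)$ and running a leaf-by-leaf Poincaré inequality), $c(D)$ will be a function of the quantitative parameters listed in Def.~\ref{def:wellfoliated} (such as the transverse thickness of leaves and Lipschitz constants of the foliation maps). The well-covered hypothesis is explicitly designed so that these parameters are comparable to a fixed reference profile after rescaling by $\epsilon$; hence a change-of-variables argument, using the $\epsilon^{-2}$ scaling of Neumann eigenvalues under uniform dilation, shows that $c(D_i^\epsilon)$ coincides, up to multiplicative constants, with the corresponding constant for the rescaled domain $\epsilon^{-1}D_i^\epsilon$, which is drawn from a uniformly controlled family.

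The main obstacle I anticipate is verifying that no hidden parameter in Lem.~\ref{lem:ewestimate} can degenerate as $\epsilon\to 0$ or as $i$ varies, i.e.\ that the uniform comparability in Def.~\ref{def:wellcovered} really does translate into uniform control of every geometric quantity fed into the lemma's bound. Once this accounting is done carefully, the estimate $\lambda_2^N(D_i^\epsilon)\geq C_1(\Omega_{-\epsilon})\epsilon^{-2}$ with $\inf_{\epsilon>0}C_1(\Omega_{-\epsilon})>0$ follows with no further work, and the corollary is established.
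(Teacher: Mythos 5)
Your plan matches the paper's: apply Lem.~\ref{lem:ewestimate} to each $D_i^\epsilon$, substitute the $\epsilon$-scaled bounds on $r$, $L$, $\mathcal{I}_\beta$, $\lambda_2^N(E)$ and $\beta_{\inf}$ supplied by Def.~\ref{def:wellcovered}, and read off a uniform lower bound $C_1(\Omega_{-\epsilon})\epsilon^{-2}$ --- which is exactly how the paper obtains the formula for $C_1(\Omega_{-\epsilon})$ recorded in Sec.~\ref{sec:constants} after fixing a specific $\alpha$. One small imprecision in your first paragraph: the conclusion of Lem.~\ref{lem:ewestimate} never mentions $\diam(D)$, so there is nothing to ``insert'' from the diameter comparability; what actually produces the $\epsilon^{-2}$ scaling is $r,L,\mathcal{I}_\beta\asymp\epsilon$ together with $\lambda_2^N(E)\geq c_E r^{-2}$ and the uniform positivity of $\beta_{\inf}$, which your second paragraph correctly identifies.
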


Notice that the uniform comparable geometry of the covering domains of $\Omega_{-\epsilon}$ as used in Thm.~\ref{thm:satz1} above is precisely the condition to ensure a scaling behaviour $\lambda_2 ^N(D_i ^\epsilon) \gtrsim \diam(D_i^{\e})^{-2}$. In particular for a family of such domains $\{D_i ^\epsilon\}_{i \in I_\epsilon}$, $\sum_{i \in I_\epsilon} N_N(D_i ^\epsilon,t) = \#\{I_\epsilon\}$ whenever $\e$ is small enough. 
 Choosing $\epsilon$ depending on $t$, these estimates allow for sufficient control of $N_N(\Omega,t) \leq \sum_{\{i \in I_\epsilon\}}N_N(D_i ^\epsilon,t) + \sum_{Q \in \mathcal{W}_\epsilon} N_N(Q,t)$ to prove Thm.~\ref{thm:satz1}. 
\paragraph{Outline:} The present article is structured as follows: We first introduce the relevant notions of Laplace eigenvalue counting functions, Minkowski content and Whitney covers in Sec.~\ref{sec:preliminaries}. In Sec.~\ref{subsec:foliation} we show how one can construct families of paths ending at the boundary of a domain $D$ and show how this structure on $D$ gives rise to a lower bound for the first non-trivial Neumann eigenvalue $\lambda_2 ^N(D)$ of the form $\lambda_2 ^N(D) \gtrsim \diam(D)^{-2}$ (see Lem.~\ref{lem:ewestimate}). In Sec.~\ref{subsec:bounds}, we use this bound to obtain explicit and asymptotic estimates for the remainder term of counting functions in Thm.~\ref{thm:satz1}, providing an estimate for an $M_{\Omega}$ such that \eqref{eq:intro:glsatz1} holds 
for all sufficiently large $t$.  Sec.~\ref{subsec:bdvar} is devoted to proving that sets of bounded variation are well-covered domains. Finally, in Sec.~\ref{subsec:application} we apply Thm.~\ref{thm:satz1} to the classic Koch snowflake and certain Rohde snowflakes, providing explicit constructions and results for these cases. 
In the final Sec.~\ref{sec:constants}, important constants, that are used throughout the article, are collected.
 \section{Preliminaries}\label{sec:preliminaries} 
 Let $\Omega\subset\mathbb R^n$ be a bounded domain, i.\,e.\ a bounded open connected subset of $\mathbb{R}^n$ and denote its boundary by $\partial \Omega$. 
 By $H^1(\Omega)=W^{1,2}(\Omega)$ we denote the usual Sobolev space, i.\,e.\ the set of all $u \in L^2(\Omega)$ with a weak derivative $\nabla u \in L^2(\Omega)$. We equip $H^1(\Omega)$ with the usual inner product $(u,v)_{H^1(\Omega)} := (u,v)_{L^2(\Omega)} + (\nabla u,\nabla v)_{L^2(\Omega)}$ so that $H^1(\Omega)$ becomes a Hilbert space. Further, $H^1 _0 (\Omega)\subset H^1(\Omega)$ shall denote the closure of the set of compactly supported $C^\infty(\Omega)$-functions in $H^1(\Omega)$.
 On $H^1(\Omega)$, resp.\ $H^1 _0(\Omega)$, we consider the Laplacian $\Delta := \sum_{i=1} ^n \partial_i ^2$ and focus on the eigenvalue equation of $-\Delta$ subject to Neumann \eqref{eq:ewgl} or Dirichlet \eqref{eq:ewgld} boundary conditions.
 
 \begin{minipage}{0.4\textwidth}
\begin{align}
\begin{cases}
    -\Delta u = \lambda u &\text{ in } \Omega\\
    \frac{\partial u}{\partial \mathbf{n}} = 0 & \text{ on } \partial\Omega
\end{cases} \label{eq:ewgl}
\end{align}
 \end{minipage}
 \hspace{0.1\textwidth}
 \begin{minipage}{0.4\textwidth}
\begin{align}
\begin{cases}
    -\Delta u = \lambda u & \text{ in } \Omega\\
    u = 0 & \text{ on } \partial\Omega
\end{cases} \label{eq:ewgld}
\end{align}
 \end{minipage}~\\
 
\noindent{where $\mathbf{n}$ denotes the exterior normal to $\partial \Omega$.}
The variational formulation of the problem \eqref{eq:ewgl} is stated as follows: Find $u \in H^1(\Omega)$ such that $(\nabla u,\nabla v)_{L^2{(\Omega)}} = \lambda (u,v)_{L^2{(\Omega)}}$ for all $v \in H^1(\Omega)$. Note that the space in which this problem is studied dictates the boundary condition and that the variational formulation of \eqref{eq:ewgld} is: Find $u \in H^1_0(\Omega)$ such that $(\nabla u,\nabla v)_{L^2{(\Omega)}} = \lambda (u,v)_{L^2{(\Omega)}}$ for all $v \in H^1_0(\Omega)$. 
Replacing $H^1(\Omega)$ resp.\ $H^1_0(\Omega)$ with any other closed space $V$ satisfying $H^1 _0(\Omega) \subset V \subset H^1(\Omega)$ gives rise to variational problems with more general boundary conditions.
\par
The corresponding non-negative spectrum will be denoted by $\sigma(-\Delta)$ and the essential spectrum by $\sigma_{\text{ess}}(-\Delta)$. 
 Supposing that the non-essential spectrum of the eigenvalue problem \eqref{eq:ewgl} contains at least two isolated points $0 = \lambda_1 ^N(\Omega) <\lambda_2 ^N(\Omega) < \inf \sigma_{\text{ess}}(-\Delta)$, it is well known that
  \begin{align}
  \lambda_2 ^N = \inf_{\substack{v \in H ^1(\Omega)\backslash\{0\}:\\ \int_{\Omega} vdx=0}} \frac{\|\nabla v\|^2 _{L^2(\Omega)}}{\| v \|^2 _{L^2(\Omega)}}. \label{eq:rayleigh}
 \end{align}
 Writing $1^\perp$ for the set of non-constant functions, we then have:
    \begin{cor}[cf.\ \cite{NetrusovSafarov2005}]\label{cor:minmax2}
   Let $\lambda_2 ^N(\Omega)$ be the first non-trivial Neumann eigenvalue of $\Omega$. Then
   \begin{align*}
    \lambda_2 ^N(\Omega) \geq L^{-1} \Leftrightarrow \left(\| u \|^2 _{L^2(\Omega)} \leq L \| \nabla u \|^2 _{L^2(\Omega)} \,\,\forall u \in H^1 (\Omega) \cap 1^\perp \right).
   \end{align*}
  \end{cor}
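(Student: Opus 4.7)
The plan is to reduce the equivalence directly to the variational characterization \eqref{eq:rayleigh} of $\lambda_2^N(\Omega)$ as the infimum of $\|\nabla v\|^2_{L^2(\Omega)}/\|v\|^2_{L^2(\Omega)}$ over nonzero mean-zero $v\in H^1(\Omega)$. Identifying $1^\perp$ with the $L^2$-orthogonal complement of the constant function $1$, i.e.\ the subspace of mean-zero functions in $L^2(\Omega)$, the corollary becomes a direct rearrangement: the bound $\lambda_2^N(\Omega)\geq L^{-1}$ is equivalent to saying that every admissible Rayleigh quotient is $\geq L^{-1}$, which in turn is the Poincar\'{e}--Wirtinger-type inequality $\|u\|^2_{L^2(\Omega)}\leq L\|\nabla u\|^2_{L^2(\Omega)}$ on that class.

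Concretely, for the forward implication I would take an arbitrary $u\in H^1(\Omega)\cap 1^\perp$; if $u=0$ the inequality is trivial, otherwise \eqref{eq:rayleigh} gives $\|\nabla u\|^2_{L^2(\Omega)}/\|u\|^2_{L^2(\Omega)}\geq\lambda_2^N(\Omega)\geq L^{-1}$, and multiplying through by $L\|u\|^2_{L^2(\Omega)}$ yields the claimed bound. For the converse I would fix any admissible test $v$ in the infimum defining $\lambda_2^N(\Omega)$, note that $v\in H^1(\Omega)\cap 1^\perp$, apply the hypothesis to obtain $\|\nabla v\|^2_{L^2(\Omega)}/\|v\|^2_{L^2(\Omega)}\geq L^{-1}$, and pass to the infimum via \eqref{eq:rayleigh}.

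The only delicate point is the reading of $1^\perp$: interpreted literally as all non-constant functions, the right-hand inequality would fail for, e.g., a large constant added to a small mean-zero eigenfunction, so I would read $1^\perp$ in its standard $L^2$ sense as the orthogonal complement of constants in $L^2(\Omega)$. Once this convention is fixed, the statement is a straightforward reformulation of \eqref{eq:rayleigh} and no deeper obstacle remains; in particular no additional compactness, extension, or regularity input about $\Omega$ or $\partial\Omega$ is required beyond what underlies \eqref{eq:rayleigh} itself.
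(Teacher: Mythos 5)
Your argument is correct and is essentially the only sensible route: the paper itself does not supply a proof of Cor.~\ref{cor:minmax2} (it is cited as classical, following \cite{NetrusovSafarov2005}), but the variational formula \eqref{eq:rayleigh} immediately preceding the corollary is clearly what it is meant to be read against, and your two directions are exactly the rearrangement of that infimum. Your caveat about the reading of $1^\perp$ is also well taken: although the paper's wording (\textquotedblleft set of non-constant functions\textquotedblright) is loose, the subsequent use of the corollary in the proof of Lem.~\ref{lem:ewestimate} (where it is applied to $u-\overline{u_E}$, which has zero mean over $E$) confirms that $1^\perp$ is to be understood as the $L^2$-orthogonal complement of the constants, i.e.\ the mean-zero subspace, exactly as you assume.
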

  This classic variational result will be crucial in proving lower bounds for the first non-trivial eigenvalue under Neumann conditions (see Lemma~\ref{lem:ewestimate}). The analogous statements to \eqref{eq:rayleigh} and Cor.~\ref{cor:minmax2} hold true for other boundary conditions by replacing $H^1(\Omega)$ 
  with any closed space $V$ satisfying $H^1 _0(\Omega) \subset V \subset H^1(\Omega)$ in both statements, and by replacing $1^\perp$ with 'orthogonal to the eigenfunction to $\lambda_1$' (see for example \cite{teplyaev2022,la1991}).
  \par
 In order to define a counting function $N(\Omega,t) := \#\{\lambda \in \sigma(-\Delta):\lambda \leq t\}$, it is necessary that $\sigma(-\Delta)$ is discrete with the only accumulation point being at $\infty$. While this is always satisfied in case of Dirichlet boundary conditions, there are several occasions where this may fail to be true under Neumann boundary conditions as the essential spectrum can be non-empty in this case. Indeed, it was shown in \cite{hesesi1990} that any closed subset of $\mathbb{R}_{\geq 0}$ can be realised as the essential spectrum of a Laplacian on a domain $\subset \mathbb{R}^2$ subject to Neumann boundary conditions. On the other side, several criteria have been found which ensure that the essential spectrum is empty, see for example \cite{netrusov2007}. In \cite{rellich1948} Rellich showed that the Neumann spectrum is discrete whenever the inclusion $\iota:H^2(\Omega) \hookrightarrow L^2(\Omega)$ is compact.\footnote{The original German publication uses the concept of completely continuous operators (\glqq{}vollstetige Operatoren\grqq{}). Whenever the domain is a Hilbert space, this notion coincides with the usual notion of a compact operator.} Maz'ya showed in \cite{ma1968} that $\iota$ is compact iff $\lim_{M \to 0} \inf_{t\in (0,M)} t^{-1} \inf_{\substack{\vol_n(F_1) \geq t\\ \vol_n(F_2) \leq M}} c(F_1,F_2) = 0$. Here, $c(F_1,F_2):=\inf_{f \in V(F_1,F_2)} \int_{\Omega} |\nabla f|^2dx$ is called \emph{relative capacity} where $V(F_1,F_2) :=\{f \in C^\infty(\Omega):f|_{F_1}=1, f|_{F_2}=0\}$ for any two disjoint $F_1,F_2 \subset \Omega$ that are closed in $\Omega$ giving possible descriptions of such domains.\par
 Any domain $\Omega$ that admits a bounded linear extension operator $E:H^1(\Omega) \to H^1(\mathbb{R}^n)$ (called an \emph{extension domain}) has a discrete Neumann spectrum, as the Neumann counting function exists in this case (see e.\,g.\ \cite{la1991}). In the planar case, a simply connected domain $\Omega$ is an extension domain iff $\partial \Omega$ is a \emph{quasicircle}, as was shown in \cite{vodo1979,jones1981}. Recall that a quasicircle is the image of the planar unit circle under a quasiconformal map. 
 It can be shown (see \cite{ahlfors1963} or \cite{mazya2011} and the references therein) that a Jordan curve $\gamma \subset \mathbb{R}^2$ is quasicircular if there is some $c \in \mathbb{R}$ such that for any two points $a,b \in \gamma$ one has $\diam \Gamma \leq c|a-b|$, where $\Gamma \subset \gamma \backslash \{a,b\}$ is the component with smaller diameter. Moreover, Rohde has shown in \cite{rohde2001} that any quasicircle is a snowflake up to a bi-Lipschitz map, in particular showing that any possible snowflake is also a quasicircle, see Rem.~\ref{rem:Rohde}. Higher dimensional domains that are bounded by quasispheres or the  $(\epsilon,\delta)$-domains introduced by Jones in \cite{jones1981} are also extension domains but this set is no longer exhaustive \cite{teplyaev2022}; see also \cite{hajlasz2008} for a complete description of such extension domains and \cite{ahlfors2006,rickman1966} for further characterisation of such maps.
\subsection{Dirichlet-Neumann bracketing} 
  For a domain $\Omega \subset \mathbb{R}^n$ a \emph{volume cover} $\{\Omega_i\}_{i \in I}$ of $\Omega$ consists of at most countably many open sets $\Omega_i \subset \Omega$ with $\vol_n(\Omega) = \vol_n\left( \bigcup_{i \in I}\Omega_i \right)$. We write $N_N(\Omega,t)$ (resp.\ $N_D(\Omega,t)$) for the number of eigenvalues (with multiplicity) of $-\Delta$ on $\Omega$ subject to Neumann (resp.\ Dirichlet) conditions on $\partial \Omega$. In addition to the classic Dirichlet-Neumann bracketing linking the counting functions of Dirichlet and Neumann eigenvalues, we mention a version that allows for non-disjoint covers as long as the elements of the cover do not intersect too often. More precisely one has the following result which also follows from the Min-Max-Principle. For any volume cover $\{\Omega_i\}_{i \in I}$ of $\Omega$, let $\mu:=\sup_{x \in \Omega}\#\{i \in I:x \in \Omega_i\}$ denote its \emph{multiplicity}.
  \begin{prop}[Dirichlet-Neumann bracketing with multiplicity, \cite{NetrusovSafarov2005}]\label{prop:bracketing}
   Let $\{\Omega_i\}_{i \in I}$ be a volume cover of $\Omega$. If the $\Omega_i$ are pairwise disjoint, then
   \begin{align*}
    \sum_{i \in I} N_D(\Omega_i,t) \leq N_D(\Omega,t) \leq N_N(\Omega,t) \leq \sum_{i \in I} N_N(\Omega_i,t)
   \end{align*}
   If the volume cover has finite multiplicity $\mu$, then
   \begin{align*}
    N_{N}(\Omega,t) \leq \sum_{i \in I} N_{N}(\Omega_i,\mu t).
   \end{align*}
  \end{prop}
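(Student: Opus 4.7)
The natural tool is the min-max characterisation of the eigenvalue counting function: writing $R_\Omega(u):=\|\nabla u\|^2_{L^2(\Omega)}/\|u\|^2_{L^2(\Omega)}$ for the Rayleigh quotient, one has $N_N(\Omega,t)=\max\{\dim V : V\subset H^1(\Omega),\ R_\Omega(u)\le t\text{ for all }u\in V\setminus\{0\}\}$, and the analogous identity holds for $N_D$ with $H^1_0$ in place of $H^1$. For a disjoint union $\bigsqcup_i\Omega_i$ one has $N_{N/D}(\bigsqcup_i\Omega_i,t)=\sum_i N_{N/D}(\Omega_i,t)$ from the orthogonal decomposition of the Sobolev space. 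All three inequalities in the proposition will be consequences of suitable comparison maps between the test spaces on $\Omega$ and on $\bigsqcup_i\Omega_i$.

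\textbf{Disjoint case.} The middle inequality $N_D(\Omega,t)\le N_N(\Omega,t)$ follows from $H^1_0(\Omega)\subset H^1(\Omega)$ and min-max. For the Dirichlet lower bound, the extension-by-zero map $\bigoplus_i H^1_0(\Omega_i)\hookrightarrow H^1_0(\Omega)$ preserves the $L^2$-norm and gradient norm; feeding an admissible test space from each $\Omega_i$ gives an admissible test space in $\Omega$ of the same dimension, so $\sum_i N_D(\Omega_i,t)\le N_D(\Omega,t)$. For the Neumann upper bound, the restriction map $u\mapsto(u|_{\Omega_i})_i\colon H^1(\Omega)\to\bigoplus_i H^1(\Omega_i)$ is injective (since $\{\Omega_i\}$ is a volume cover) and, in the disjoint case, an isometry in both $L^2$- and gradient-norms. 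Hence any $t$-admissible $V\subset H^1(\Omega)$ maps to a $t$-admissible subspace of the same dimension in $\bigoplus_i H^1(\Omega_i)$, giving $N_N(\Omega,t)\le\sum_i N_N(\Omega_i,t)$.

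\textbf{Multiplicity case.} Here the $\Omega_i$ may overlap, but $\#\{i:x\in\Omega_i\}\in[1,\mu]$ for a.\,e.\ $x\in\Omega$ since $\{\Omega_i\}$ covers $\Omega$ in volume with multiplicity at most $\mu$. Integrating this pointwise bound yields
\begin{align*}
\|u\|^2_{L^2(\Omega)} \le \sum_{i\in I}\|u|_{\Omega_i}\|^2_{L^2(\Omega_i)} \le \mu\,\|u\|^2_{L^2(\Omega)}, \qquad \sum_{i\in I}\|\nabla u|_{\Omega_i}\|^2_{L^2(\Omega_i)} \le \mu\,\|\nabla u\|^2_{L^2(\Omega)},
\end{align*}
for every $u\in H^1(\Omega)$. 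Combining the first lower bound with the second upper bound, the Rayleigh quotient on the direct sum satisfies $R_{\bigsqcup\Omega_i}((u|_{\Omega_i})_i)\le \mu\,R_\Omega(u)$. Therefore the restriction map sends any $t$-admissible subspace $V\subset H^1(\Omega)$ to a $\mu t$-admissible subspace of $\bigoplus_i H^1(\Omega_i)$; injectivity of restriction (again from the volume-cover property) preserves dimension, and min-max on the direct sum gives $N_N(\Omega,t)\le\sum_i N_N(\Omega_i,\mu t)$.

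\textbf{Main obstacle.} The non-trivial step is setting up the restriction map correctly so that the overlaps contribute only a multiplicative factor of $\mu$ and not an additional boundary term: this hinges on the fact that pointwise restriction, unlike extension-by-zero, produces genuine $H^1$-functions on each piece without introducing singular derivatives along $\partial\Omega_i\cap\Omega$. Once this is observed, the quantitative comparison of $L^2$- and gradient-norms via the pointwise multiplicity bound is routine, and the min-max machinery converts it directly into the claimed counting-function inequality.
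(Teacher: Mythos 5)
The paper cites Prop.~\ref{prop:bracketing} from Netrusov--Safarov without reproducing a proof, so there is no in-paper argument to compare against; however, your min--max proof is correct and is exactly the standard variational route used in that source and throughout the literature. The key observations you make --- extension-by-zero is an isometric embedding $\bigoplus_i H^1_0(\Omega_i)\hookrightarrow H^1_0(\Omega)$ for the Dirichlet lower bound, and restriction $u\mapsto(u|_{\Omega_i})_i$ is injective (by the volume-cover property) with Rayleigh-quotient distortion at most $\mu$ (from the pointwise bound $1\le\#\{i:x\in\Omega_i\}\le\mu$ a.e.) for the Neumann upper bound --- are precisely the right ones, and the argument closes cleanly via Courant--Fischer on the direct sum.
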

   \begin{cor}\label{cor:RundQ}
  Let $\{\Omega_n\}$ be a finite volume cover of a domain $\Omega$. Define $Q(\Omega',t) := N_N(\Omega',t) - N_D(\Omega',t)$ for any bounded open set $\Omega'$ and let $R_D(t):=N_D(\Omega,t)-\sum_{n \in \mathbb{N}} N_D( \Omega_n,t)$. Then
  \begin{align*}
   R_D(t) \leq - Q(\Omega,t) + \sum_{n \in \mathbb{N}} Q(\Omega_n,t).
  \end{align*}
 \end{cor}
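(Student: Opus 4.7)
The plan is to observe that this corollary is a direct algebraic consequence of the Neumann bracketing inequality stated in Proposition~\ref{prop:bracketing}. I would start by substituting the definitions of $Q$ and $R_D$ into the claimed inequality
\[
 R_D(t) \leq -Q(\Omega,t)+\sum_{n}Q(\Omega_n,t)
\]
and then cancel the common terms on both sides. Specifically, the left-hand side is $N_D(\Omega,t)-\sum_n N_D(\Omega_n,t)$, while the right-hand side is
\[
 -N_N(\Omega,t)+N_D(\Omega,t)+\sum_{n}N_N(\Omega_n,t)-\sum_{n}N_D(\Omega_n,t).
\]
After cancelling $N_D(\Omega,t)$ on both sides and likewise $\sum_n N_D(\Omega_n,t)$, the inequality reduces to $N_N(\Omega,t)\leq \sum_n N_N(\Omega_n,t)$.

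This reduced statement is exactly the upper Neumann inequality from Proposition~\ref{prop:bracketing} applied to the finite (disjoint) volume cover $\{\Omega_n\}$ of $\Omega$, and so the corollary follows. Since the cover is assumed finite, there are no convergence issues in rearranging the finite sums. I would write the proof as three short lines: state the desired inequality, perform the cancellation, and invoke Proposition~\ref{prop:bracketing}.

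There is no serious obstacle here; the content of the corollary is really just a bookkeeping identity that repackages the Neumann bracketing into a form directly expressing the Dirichlet remainder $R_D$ in terms of the Neumann--Dirichlet gaps $Q$. This repackaging is what makes the estimate useful later, as it turns control of $N_N-C_W^{(n)}\vol_n(\cdot)t^{n/2}$ from Theorem~\ref{thm:satz1} into control on $Q$ and hence on $R_D$, enabling the combination with the bounds of van den Berg--Lianantonakis referenced in Remark~\ref{rem:spray}.
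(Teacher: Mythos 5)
Your proof is correct and is essentially the same algebraic manipulation the paper uses: both reduce the desired inequality, after substitution and cancellation, to the Neumann bracketing inequality $N_N(\Omega,t)\leq\sum_n N_N(\Omega_n,t)$ from Prop.~\ref{prop:bracketing}. You correctly flag (in the parenthetical ``disjoint'') that this step requires the cover to be pairwise disjoint, an assumption the corollary's statement leaves implicit but which is needed since the non-disjoint version of Prop.~\ref{prop:bracketing} would only give $N_N(\Omega,t)\leq\sum_n N_N(\Omega_n,\mu t)$.
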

 \begin{proof}
  Prop.~\ref{prop:bracketing} shows that $Q(\Omega',t)\geq 0$ and $N_N(\Omega,t) - \sum_n N_N( \Omega_n,t) \leq 0$. Moreover,
  \begin{align*}
 R_{D} (t) &= N_D (\Omega,t)-\sum_{n} N_D  (\Omega_n,t) =N_N (\Omega,t)-\sum_n N_D (\Omega_n,t)- Q(\Omega,t)\\
 &\leq \sum_n N_N (\Omega_n,t)- \sum_n N_D (\Omega_n,t)- Q(\Omega,t)=\sum_n Q(\Omega_n, t) -Q(\Omega,t).\qedhere
\end{align*}
 \end{proof}
 A simple estimate for $Q$ was found in \cite{SafarovFilonov2010} based on estimates on the Neumann counting function.
\subsection{Minkowski content and dimension}\label{sec:defminkdim} 
 Let $X$ be any non-empty open bounded subset of $\mathbb{R}^n$ with boundary $\partial X$. We write
 \begin{align*}
      X_\epsilon := \{x \in \mathbb{R}^n:\dist(x, X)<\epsilon\}\,\text{ and }X_{-\epsilon} := \{x \in X:\dist(x,\partial X)<\epsilon\} = (\partial X)_\epsilon \cap X.
  \end{align*}
 Then for any $s>0$ one defines the corresponding \emph{upper inner $s$-Minkowski content} as
  \begin{align*}
   \overline{\mathcal{M}} _s(\partial X,X) := \limsup_{\epsilon \searrow 0} \underbrace{\vol_n (X_{-\epsilon})\epsilon^{-(n-s)}}_{=:\overline{\mathcal{M}}_s ^\epsilon(\partial X,X)}
  \end{align*}
  whenever this exists. We call $\partial X$ \emph{upper inner Minkowski measurable} whenever $\overline{\mathcal{M}}_s(\partial X,X) \in (0,\infty)$ for some $s$. We observe that (similar to the case of Hausdorff dimensions), one has for $s,t>0$
  \begin{align*}
   \overline{\mathcal{M}}_t ^\epsilon(\partial X,X) = \epsilon^{t-s}\overline{\mathcal{M}}_s ^\epsilon(\partial X,X).
  \end{align*}
  It follows that $s \mapsto \overline{\mathcal{M}}_s (\partial X,X)$ has a critical \glqq{}jump\grqq{} from $+\infty$ to $0$ at exactly one $s \in \mathbb{R}_{\geq 0}$. This is defined as the \emph{upper inner Minkowski dimension}:
  \begin{align*}
   \overline{\dim_M} (\partial X,X) := \inf \left\{s: \overline{\mathcal{M}}_s (\partial X,X) = 0 \right\}. 
  \end{align*}
  Whenever $\partial X$ is upper inner Minkowski measurable, the upper inner $s$-Minkowski content for $s=\overline{\dim_M}(\partial X,X)$ is called the \emph{upper inner Minkowski content of $\partial X$} and is denoted by $\overline{\mathcal{M}}(\partial X,X)$. The classic upper (i.\,e.\ non-inner) Minkowski content and dimension are obtained by replacing $X_{-\epsilon}$ with $X_\epsilon$ in this definition.
  \subsection[Regularisation of Omega - Omega-epsilon by Whitney covers]{Regularisation of $\boldsymbol{\Omega \backslash \Omega_{-\epsilon}}$ by Whitney covers}\label{subsec:whitney}
  Whitney covers are a common construction used to prove estimates on counting functions by approximating a domain by a discrete union of cubes. Let $\Omega \subset \mathbb{R}^n$ be an arbitrary bounded domain. A Whitney cover of $\Omega$ is a volume cover of $\Omega$ by cubes of different sizes such that a cube containing $x \in \Omega$ has a diameter that is uniformly comparable to $\dist(x,\partial \Omega)$. The construction of such a cover is well known and sometimes attributed to Whitney, who used it to study extensions of functions in \cite{whitney1934}. However, the basic idea behind it had in fact already been published by Courant and Hilbert in \cite[pp.~355-356]{CourantHilbert1924} in 1924 when Whitney was only 17.  
  For the sake of completeness we include the construction from \cite[Chap.~VI§1]{stein1970}, as the notation introduced here will be used in later results: Consider the nested family of lattices $\left\{2^{-k}\mathbb{Z}^n\right\}_{k \in \mathbb{Z}}$. To each point $p=(p_1,\ldots,p_n)$ in the lattice $2^{-k}\mathbb{Z}^n$ we take the open cubes $\prod_{i=1} ^n \left(p_i,p_i+2^{-k}\right)$ with diagonal of length $2^{-k}\sqrt{n}$ in the positive quadrant of $p$. The set of these cubes is denoted by $\Cube(2^{-k}\mathbb{Z}^n)$. We then slice up $\Omega$ into sectors and cover them individually:
   \begin{align}
    \mathcal{W}_k &:= \left\{Q \in \Cube(2^{-k}\mathbb{Z}^n):Q \cap \left( \overline{\Omega_{-2^{-k+2}\sqrt{n}}} \backslash \Omega_{-2^{-k+1}\sqrt{n}} \right)\neq \emptyset \right\} \label{eq:constructionwhitneylocal} \\
    \mathcal{W}' &:= \bigcup_{k \in \mathbb{Z}} \mathcal{W}_k. \nonumber
   \end{align}
   Since $\Omega = \bigcup_{k \in \mathbb{Z}} \left(\overline{\Omega_{-2^{-k+2}\sqrt{n}}} \backslash \Omega_{-2^{-k+1}\sqrt{n}}\right)$, it follows that $\vol_n\Omega = \vol_n\left(\bigcup_{Q \in \mathcal{W}'} Q\right)$. For the following, let $Q \in \mathcal{W}'$ and let $k \in \mathbb{Z}$ be so that $Q \in \mathcal{W}_k$. Then by definition, $\diam(Q)=2^{-k}\sqrt{n}$ and there exists $x \in Q \cap \left( \overline{\Omega_{-2^{-k+2}\sqrt{n}}} \backslash \Omega_{-2^{-k+1}\sqrt{n}} \right)$. For this $x$ we have $2^{-k+1}\sqrt{n} \leq \dist(x,\partial \Omega) \leq 2^{-k+2}\sqrt{n}$. Furthermore, $\dist(Q,\partial \Omega) \leq 2^{-k+2}\sqrt{n} = 4\diam(Q)$. Moreover, we have $\diam(Q) = 2^{-k+1}\sqrt{n}-2^{-k}\sqrt{n} \leq \dist(x,\partial \Omega)-\diam(Q) \leq \dist(Q,\partial \Omega)$. However, $\mathcal{W}'$ might contain overlapping cubes. To rule out such cubes, observe that any two intersecting cubes are nested. Suppose two cubes $Q,Q'$ intersect with $Q \subset Q'$. Then by the above
   \begin{align*}
    \diam(Q') \leq \dist(Q',\partial \Omega) \leq \dist(Q,\partial \Omega) \leq 4\diam(Q).
   \end{align*}
   This shows that to any nested sequence of intersecting cubes in $\mathcal{W}'$ there is a unique maximal cube containing all others. We define $\mathcal{W} \subset \mathcal{W}'$ as the set of all those maximal cubes, which then are pairwise disjoint and $\mathcal{W}$ inherits the other properties from $\mathcal{W}'$. Such a volume cover will be called a \emph{Whitney cover}. This proves:
  \begin{lem}[Existence of Whitney covers, \cite{stein1970}]\label{lem:exwhitney}
   Let $\Omega \subset \mathbb{R}^n$ be a domain. Then there is a volume cover of $\Omega$ consisting of pairwise disjoint cubes $\{Q\}_{Q \in \mathcal{W}}$ and
  \begin{align}
   \diam(Q) \leq \dist(Q,\partial \Omega) \leq 4\diam(Q) \qquad \forall Q \in \mathcal{W}. \label{eq:whitney}
  \end{align}
  \end{lem}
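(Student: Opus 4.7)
The plan is to follow Whitney's classical dyadic construction, which the preceding narrative essentially lays out; I would organise the verification in three stages.

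First, I fix the nested dyadic lattices $\{2^{-k}\mathbb{Z}^n\}_{k\in\mathbb{Z}}$, associate to each $p\in 2^{-k}\mathbb{Z}^n$ the open cube $\prod_{i=1}^n(p_i,p_i+2^{-k})$ of diameter $2^{-k}\sqrt{n}$, and let $\mathcal{W}_k$ collect those cubes meeting the annular sector $\overline{\Omega_{-2^{-k+2}\sqrt{n}}}\setminus\Omega_{-2^{-k+1}\sqrt{n}}$, setting $\mathcal{W}':=\bigcup_{k\in\mathbb{Z}}\mathcal{W}_k$. Since $\Omega$ is the disjoint union of these sectors up to a set of measure zero, $\mathcal{W}'$ is automatically a volume cover of $\Omega$.

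Second, I verify \eqref{eq:whitney} for every $Q\in\mathcal{W}_k$: picking a witness $x\in Q$ lying in the relevant sector gives $2^{-k+1}\sqrt{n}\leq\dist(x,\partial\Omega)\leq 2^{-k+2}\sqrt{n}$; combined with $\diam(Q)=2^{-k}\sqrt{n}$ this yields both $\dist(Q,\partial\Omega)\leq\dist(x,\partial\Omega)\leq 4\diam(Q)$ and $\dist(Q,\partial\Omega)\geq\dist(x,\partial\Omega)-\diam(Q)\geq\diam(Q)$.

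Third, and this is the only non-routine step, I prune $\mathcal{W}'$ to a pairwise disjoint subfamily $\mathcal{W}$. Here I exploit the fundamental dyadic property that any two such cubes are either disjoint or nested. If $Q\subset Q'$ in $\mathcal{W}'$, then applying \eqref{eq:whitney} to $Q'$ together with $\dist(Q',\partial\Omega)\leq\dist(Q,\partial\Omega)$ forces $\diam(Q')\leq 4\diam(Q)$, so every intersecting chain has uniformly bounded diameters and thus admits a unique maximal cube. I define $\mathcal{W}$ to be the collection of these maximal cubes.

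The main obstacle, modest as it is, is checking that this pruning still produces a volume cover: every $x\in\Omega$ avoiding the dyadic hyperplanes lies in at least one $Q\in\mathcal{W}'$ and hence inside the maximal cube of its nesting chain, so no $n$-volume is lost. The resulting $\mathcal{W}$ is then pairwise disjoint, inherits \eqref{eq:whitney} from $\mathcal{W}'$, and covers $\Omega$ in volume, as required.
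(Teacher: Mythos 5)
Your proof is correct and follows essentially the same dyadic Whitney construction that the paper itself spells out in Sec.~\ref{subsec:whitney}: same lattice-annulus pairing, same witness-point argument for \eqref{eq:whitney}, and the same pruning-to-maximal-cubes step with the $\diam(Q')\leq 4\diam(Q)$ bound to justify that maximal cubes exist. No gaps; the only remark is that the claim ``$\dist(Q,\partial\Omega)\geq\dist(x,\partial\Omega)-\diam(Q)$'' deserves the one-line justification (triangle inequality for any $y\in Q$) that you implicitly use, exactly as the paper does.
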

  Based on this, we have an immediate estimate on the number of cubes of a certain size in a Whitney cover.
   \begin{prop}[Cardinality of slices of Whitney covers]\label{prop:whitneycardinality}
    Let $\mathcal{W}$ be a Whitney cover of a domain $\Omega \subset \mathbb{R}^n$, as constructed above. Suppose that there exists $\delta \in [n-1,n)$ such that $\overline{\mathcal{M}}_\delta(\partial \Omega,\Omega) \in (0,\infty)$. Then there is $\mathfrak{M}_\Omega \in \mathbb{R}$ such that $\#\mathcal{W}_k \leq \mathfrak{M}_\Omega 2^{k \delta}$.
    \end{prop}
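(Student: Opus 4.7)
The plan is to exploit the volumetric constraint that all cubes of $\mathcal{W}_k$ lie near the boundary at scale $2^{-k}$, combined with the growth estimate of $\vol_n(\Omega_{-\epsilon})$ coming from the Minkowski content hypothesis. First, observe that the cubes in $\mathcal{W}_k$ are pairwise disjoint: they are all drawn from the single lattice $2^{-k}\mathbb{Z}^n$, and distinct half-open lattice cubes are automatically disjoint. Each such cube has volume $2^{-kn}$.

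Next I would localize the cubes near the boundary. If $Q\in\mathcal{W}_k$, pick $x\in Q\cap(\overline{\Omega_{-2^{-k+2}\sqrt n}}\setminus \Omega_{-2^{-k+1}\sqrt n})$. Then for every $y\in Q$,
\[
\dist(y,\partial\Omega)\leq \dist(x,\partial\Omega)+\diam(Q)\leq 2^{-k+2}\sqrt n+2^{-k}\sqrt n=5\cdot 2^{-k}\sqrt n,
\]
so $Q\subset\Omega_{-\epsilon_k}$ with $\epsilon_k:=5\cdot 2^{-k}\sqrt n$. Combining disjointness with this containment yields
\[
\#\mathcal{W}_k\cdot 2^{-kn}\;=\;\sum_{Q\in\mathcal{W}_k}\vol_n(Q)\;\leq\;\vol_n\bigl(\Omega_{-\epsilon_k}\bigr).
\]

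Now I invoke the hypothesis $\overline{\mathcal{M}}_\delta(\partial\Omega,\Omega)\in(0,\infty)$. Since this is a $\limsup$, fixing any $c>\overline{\mathcal{M}}_\delta(\partial\Omega,\Omega)$ there is some $\epsilon_0>0$ such that $\vol_n(\Omega_{-\epsilon})\leq c\,\epsilon^{n-\delta}$ whenever $0<\epsilon<\epsilon_0$. Let $k_0$ be the smallest integer with $\epsilon_k<\epsilon_0$ for all $k\geq k_0$. For such $k$ we obtain
\[
\#\mathcal{W}_k\;\leq\;2^{kn}\,c\,\epsilon_k^{n-\delta}\;=\;c(5\sqrt n)^{n-\delta}\,2^{k\delta}.
\]

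It only remains to handle the finitely many indices $k<k_0$ for which cubes of side $2^{-k}$ still fit, i.e.\ those with $2^{-k}\leq \diam\Omega$. For each such $k$ the cubes of $\mathcal{W}_k$ are disjoint and lie in $\Omega$, so $\#\mathcal{W}_k\leq \vol_n(\Omega)\,2^{kn}$, which is bounded by a $k$-independent constant times $2^{k\delta}$ on the finite range considered. Taking
\[
\mathfrak{M}_\Omega\;:=\;\max\!\Bigl\{c(5\sqrt n)^{n-\delta},\ \max_{k<k_0}\#\mathcal{W}_k\cdot 2^{-k\delta}\Bigr\}
\]
produces a single constant that works for every $k\in\mathbb{Z}$. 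No step here is truly delicate; the only point requiring care is passing from the $\limsup$ definition of $\overline{\mathcal{M}}_\delta$ to a uniform bound on $\overline{\mathcal{M}}_\delta^{\epsilon}$ valid for all sufficiently small $\epsilon$, which is why the constant absorbs the finitely many exceptional scales rather than being the Minkowski content itself.
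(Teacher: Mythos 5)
Your proof is correct and follows the same core approach as the paper: bound $\#\mathcal{W}_k$ via disjointness of same-scale lattice cubes and their containment in $\Omega_{-5\sqrt{n}\,2^{-k}}$, then invoke the finiteness of $\overline{\mathcal{M}}_\delta(\partial\Omega,\Omega)$. The only cosmetic difference is that the paper compresses the case distinction into a single supremum over $k$ of a relative-error term $\epsilon'(\epsilon)=\epsilon^{\delta-n}\vol_n(\Omega_{-\epsilon})/\overline{\mathcal{M}}_\delta(\partial\Omega,\Omega)-1$, whereas you split the indices explicitly at a threshold $k_0$; both yield the same constant.
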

    Similar results concerning the Hausdorff measure were obtained for example by Käenmäki, Lehrbäck and Vuorinen in \cite{antti2013}.
    \begin{proof}
    Let $d_k:=2^{-k}\sqrt{n}$. By construction of $\mathcal{W}_k$ as in \eqref{eq:constructionwhitneylocal},
     \begin{align*}
      \# \mathcal{W}_k &\leq \frac{\vol_n \left( \Omega_{-\left( 4d_k+d_k \right)} \right) - \vol_n \left( \Omega_{-\left( 2d_k -d_k \right)} \right)}{2^{-kn}} 
      \leq \frac{ \vol_n (\Omega_{-5\cdot 2^{-k}\sqrt{n}}) }{2^{-kn}}.
     \end{align*}
     Defining $\epsilon'(\epsilon):=\epsilon^{\delta-n}\vol_n(\Omega_{-\epsilon})/\overline{\mathcal{M}}_\delta(\partial \Omega,\Omega)-1$, we have
     \begin{equation*}
      \#\mathcal{W}_k \leq \overline{\mathcal{M}}_\delta(\partial \Omega,\Omega) \left( 1 + \epsilon'\left( 5\sqrt{n} 2^{-k} \right) \right) \left( 5\sqrt{n} \right)^{n-\delta} 2^{k\delta}.
     \end{equation*}
      Then for $\mathfrak{M}_\Omega:= \sup_{k} \overline{\mathcal{M}}_\delta(\partial \Omega,\Omega) \left( 1 + \epsilon'\left( 5\sqrt{n} 2^{-k} \right) \right) \left( 5\sqrt{n} \right)^{n-\delta}$ and the assertion follows from \linebreak$\limsup_{\epsilon \to 0} \epsilon'(\epsilon)=0$.
    \end{proof}
    \begin{prop}\label{prop:whitneyumfang}
        Let $\Omega \subset \mathbb{R}^n$ be bounded with $\delta:=\overline{\dim_M}(\partial \Omega,\Omega)<\infty$ and upper inner Minkowski content $\overline{\mathcal{M}}_\delta(\partial \Omega,\Omega)\in (0,\infty)$. Let $\mathcal{W}$ be a Whitney cover of $\Omega$ and for any $\epsilon > 0$, let
        \begin{align*}
            \mathcal{W}_\epsilon := \left\{ Q \in \mathcal{W}: Q \cap \left( \Omega \backslash \Omega_{-\epsilon} \right) \neq \emptyset \right\},
        \end{align*}
        in other words, $\mathcal{W}_\epsilon$ is the smallest collection of Whitney cubes in $\mathcal W$ that covers $\{x \in \Omega:\dist(x,\partial \Omega) \geq \epsilon\}$. Then there is $A_\Omega \in \mathbb{R}$ such that
        \begin{align*}
            \vol_{n-1}  \partial \left( \overline{\bigcup_{Q \in \mathcal{W}_\epsilon} Q} \right) \leq A_\Omega \cdot \epsilon^{(n-1)-\delta}.
        \end{align*}
    \end{prop}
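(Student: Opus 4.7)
The plan is to write $U_\epsilon := \overline{\bigcup_{Q \in \mathcal{W}_\epsilon} Q}$ and to bound $\vol_{n-1}\partial U_\epsilon$ by summing over only those cubes of $\mathcal{W}_\epsilon$ that genuinely carry a face of $\partial U_\epsilon$. The first step is to check that no cube $Q \in \mathcal{W}_\epsilon$ touches $\partial\Omega$: since $Q$ meets $\Omega\setminus\Omega_{-\epsilon}$, there is $x \in Q$ with $\dist(x,\partial\Omega)\geq\epsilon$, and combined with $\dist(x,\partial\Omega)\leq \dist(Q,\partial\Omega)+\diam(Q)\leq 5\diam(Q)$ from \eqref{eq:whitney} this yields $\diam(Q)\geq\epsilon/5>0$. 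Hence $\partial U_\epsilon$ lies in the interior of $\Omega$ and consists entirely of faces of cubes $Q\in\mathcal{W}_\epsilon$ shared with neighbouring cubes $Q'\in\mathcal{W}\setminus\mathcal{W}_\epsilon$.

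The key geometric step is to show that any such boundary-contributing cube $Q$ satisfies $\diam(Q)<5\epsilon$. The adjacent $Q'\in\mathcal{W}\setminus\mathcal{W}_\epsilon$ satisfies $Q'\subset\Omega_{-\epsilon}$, so $\dist(Q',\partial\Omega)<\epsilon$, and by \eqref{eq:whitney}, $\diam(Q')\leq\dist(Q',\partial\Omega)<\epsilon$. Picking $p\in\overline{Q}\cap\overline{Q'}$ then gives
\[
 \diam(Q)\leq\dist(Q,\partial\Omega)\leq\dist(p,\partial\Omega)\leq\dist(Q',\partial\Omega)+\diam(Q')\leq 5\diam(Q'),
\]
so $\diam(Q)<5\epsilon$, i.e.\ $2^{-k(Q)}\sqrt{n}<5\epsilon$, where $k(Q)$ denotes the Whitney level of $Q$.

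The final step is a short geometric sum. Each cube $Q\in\mathcal{W}_k$ contributes at most $2n\cdot 2^{-k(n-1)}$ to $\vol_{n-1}\partial U_\epsilon$, and admissible $k$ satisfy both $2^{-k}\sqrt{n}<5\epsilon$ (previous step) and $2^{-k}\sqrt{n}\geq\epsilon/5$ (membership in $\mathcal{W}_\epsilon$, first paragraph), so the range of $k$ spans at most $\lceil\log_2 25\rceil$ consecutive integers. Together with Prop.~\ref{prop:whitneycardinality}, which yields $\#\mathcal{W}_k\leq\mathfrak{M}_\Omega 2^{k\delta}$, this gives
\[
 \vol_{n-1}\partial U_\epsilon \leq 2n\,\mathfrak{M}_\Omega \sum_{k\text{ in range}} 2^{k(\delta-n+1)}.
\]
Because the hypotheses force $\delta\in[n-1,n)$ (as required by Prop.~\ref{prop:whitneycardinality}), the summand is non-decreasing in $k$ and is dominated by its largest term at $2^k\leq 5\sqrt{n}/\epsilon$, yielding the claimed bound $A_\Omega\epsilon^{(n-1)-\delta}$ with an explicit $A_\Omega$ depending only on $n$, $\delta$ and $\mathfrak{M}_\Omega$. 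The main obstacle is the geometric bookkeeping in the second paragraph: correctly isolating the subset of Whitney cubes that actually carry surface area so that their diameters can be uniformly controlled by $\epsilon$. Once this is pinned down, the remaining cardinality estimate and the finite geometric sum are routine.
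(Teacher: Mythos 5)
Your argument is correct and takes essentially the same route as the paper: identify the Whitney cubes that actually contribute to $\partial\bigl(\overline{\bigcup_{Q\in\mathcal{W}_\epsilon}Q}\bigr)$, show their diameters are uniformly comparable to $\epsilon$, and close with Prop.~\ref{prop:whitneycardinality} and a short geometric sum. The only substantive (and harmless) difference is how the boundary-contributing cubes are isolated: the paper characterises them as cubes whose closure meets $\partial\Omega_{-\epsilon}$, obtaining $\epsilon/5\leq\diam(Q)\leq\epsilon$ (three levels), whereas you characterise them as cubes adjacent across a face to some $Q'\in\mathcal W\setminus\mathcal W_\epsilon$ and get $\epsilon/5\leq\diam(Q)<5\epsilon$ (five levels), so your constant $A_\Omega$ is a bit larger. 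You could in fact match the paper's bound within your own chain: since $p\in\overline{Q'}\subset\overline{\Omega_{-\epsilon}}$ you have $\dist(p,\partial\Omega)\leq\epsilon$ directly, hence $\diam(Q)\leq\dist(Q,\partial\Omega)\leq\dist(p,\partial\Omega)\leq\epsilon$, rather than the looser $\diam(Q)\leq 5\diam(Q')$. One small point worth pinning down explicitly is your assertion that $\partial U_\epsilon$ lies entirely on faces shared with Whitney cubes in $\mathcal{W}\setminus\mathcal{W}_\epsilon$: this relies on the fact that the closures of the Whitney cubes in $\mathcal{W}$ tile $\Omega$ (so that every interior face of $Q\in\mathcal{W}_\epsilon$ abuts, possibly across several sub-faces, cubes of $\mathcal{W}$), which holds for the construction in Sec.~\ref{subsec:whitney} but deserves a word.
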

    \begin{proof}
        Since the closure of the outermost cubes in $\mathcal{W}_\epsilon$ intersect $\partial \Omega_{-\epsilon}$, we have $\vol_{n-1} \partial\left(\overline{\bigcup_{Q \in \mathcal{W}_\epsilon} Q }\right) \leq \sum_{Q\in \mathcal{W}_{\epsilon}:\overline{Q} \cap \partial\Omega_{-\epsilon} \neq \emptyset} \vol_{n-1} \partial Q$. Suppose that $\overline{Q}$ has non-empty intersection with $\partial \Omega_{-\epsilon}$. Then there exists $x \in \overline{Q}$ such that $\dist(x,\partial \Omega) = \epsilon$. Now with \eqref{eq:whitney} and $\dist(Q,\partial \Omega) +\diam(Q) \geq \sup_{x \in Q}\dist(x,\partial \Omega) \geq \epsilon \geq \dist(Q,\partial \Omega)$, we observe that
        \begin{align*}
            5 \diam(Q) \geq \epsilon \geq \diam(Q) \geq \frac{\epsilon}{5}.
        \end{align*}
        With $k_{\min} := \min\{k \in \mathbb{Z}:2^{-k}\sqrt{n} \leq \epsilon\}$, the diameter of such a cube is therefore restricted to one of at most three possible sizes, $\diam(Q) \in \{ 2^{-k_{\min}}\sqrt{n},2^{-(k_{\min}+1)}\sqrt{n},2^{-(k_{\min}+2)}\sqrt{n} \}.$
        Hence, an upper bound of the circumference of $\overline{\bigcup_{Q \in \mathcal{W}_\epsilon} Q}$ is
        \begin{align*}
            \vol_{n-1}  \partial \left(\overline{\bigcup_{Q \in \mathcal{W}_\epsilon} Q }\right)  &\leq \sum_{k=k_{\min}} ^{k_{\min}+2} \sum_{Q \in \mathcal{W}_k} \vol_{n-1}\partial Q = 2n \sum_{k=k_{\min}} ^{k_{\min}+2} \#\mathcal{W}_k \cdot 2^{-k(n-1)}\\
            &\leq 2n\mathfrak{M}_\Omega \sum_{k=0} ^{2} 2^{-k((n-1)-\delta)} \cdot 2^{-k_{\min}((n-1)-\delta)} \leq A_\Omega \epsilon^{(n-1)-\delta},
        \end{align*}
        where $A_\Omega := 2n(2\sqrt{n})^{\delta-(n-1)} \mathfrak{M}_\Omega \sum_{k=0} ^{2} 2^{-k((n-1)-\delta)}$ since one has $2 \cdot 2^{-k_{\min}}\sqrt{n} \geq \epsilon$ and $n-1 \leq \delta$.
    \end{proof}
    In particular in the regular case (i.\,e.\ when $\delta = n-1$), Prop.~\ref{prop:whitneyumfang} shows that the circumference of the closure of the union of all large enough cubes of a Whitney cover is bounded. 
    \begin{prop}\label{prop:whitneybillard}
        In the setting of Prop.~\ref{prop:whitneyumfang} almost all Billard trajectories in $\overline{ \bigcup_{Q \in \mathcal{W}_\epsilon} Q}$ are non-periodic.
  \end{prop}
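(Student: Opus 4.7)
My plan is to apply the standard unfolding technique from the theory of rational billiards. Write $P_\epsilon := \overline{\bigcup_{Q \in \mathcal{W}_\epsilon} Q}$. Since $\Omega$ is bounded, $\mathcal{W}_\epsilon$ is finite, so $P_\epsilon$ is a compact rectilinear polytope whose boundary is contained in a finite union of hyperplanes of the form $\{x_i = c\}$ with $c$ a dyadic rational. A collision with such a hyperplane flips only the sign of the $i$-th velocity component, so a trajectory starting with velocity $v = (v_1,\ldots,v_n)$ retains a velocity in the finite set $\Sigma_v := \{(\eta_1 v_1,\ldots,\eta_n v_n) : \eta_i \in \{\pm 1\}\}$.

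Next I would unfold each reflection by replacing the future of the trajectory with its mirror image across the face just hit, equivalently by reflecting the polytope and continuing along a straight line in $\mathbb{R}^n$. Let $G$ be the group of Euclidean isometries generated by the reflections across all hyperplanes containing a face of $\partial P_\epsilon$; then $G$ is a subgroup of the signed-permutation-translation group $(\mathbb{Z}/2)^n \ltimes \mathbb{R}^n$. Its translation subgroup $T := \ker\bigl(G \to (\mathbb{Z}/2)^n\bigr)$ contains the translations $x \mapsto x + 2(c_2-c_1)e_i$ for every pair of parallel face-hyperplanes $\{x_i=c_1\}, \{x_i=c_2\}$ of $P_\epsilon$, and by picking a common dyadic denominator of all face coordinates one sees that $T$ is a lattice in $\mathbb{R}^n$. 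A billiard trajectory is periodic precisely when the unfolded straight line joins $x_0$ to $g(x_0)$ for some $g \in G$ whose linear part fixes $v$; for $v$ without vanishing components this forces $g \in T$, so $v$ must be parallel to a nonzero vector of the lattice $T$.

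The set of directions in $S^{n-1}$ parallel to some element of $T$ is countable and hence $(n-1)$-dimensional Lebesgue-null; together with the null set of $v$'s having a vanishing coordinate, this shows that the periodic directions form a measure-zero subset of $S^{n-1}$, and Fubini then delivers the claim for the Liouville measure on the unit tangent bundle $P_\epsilon \times S^{n-1}$. Trajectories that encounter the lower-dimensional singular strata of $\partial P_\epsilon$ where the reflection is ill-defined form a countable union of codimension-one immersed submanifolds and are therefore also negligible. The main technical obstacle is the careful bookkeeping of the reflection group in dimension $n>2$, in particular verifying rigorously that $T$ is a lattice rather than a dense subgroup; in dimension two this reduces to the classical analysis of rational polygonal billiards due to Katok--Zemlyakov, and the argument extends essentially verbatim to higher dimension because all face normals lie in the finite set $\{\pm e_1,\ldots,\pm e_n\}$.
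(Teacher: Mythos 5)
Your argument is correct and follows the same unfolding strategy as the paper: reduce to a rectilinear polytope with dyadic-rational face coordinates, observe that the group generated by reflections across the face hyperplanes has a full-rank lattice as its translation subgroup, and conclude that periodic directions must be parallel to a lattice vector (hence rational) and so form a null set of $S^{n-1}$. Your write-up is considerably more detailed than the paper's, which compresses the entire unfolding into the phrase \textquotedblleft{}up to lattice symmetry\textquotedblright{} and cites rationality of the direction as the periodicity criterion without spelling out the reflection group, the lattice $T$, or the treatment of singular trajectories.
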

    \begin{proof}
        We may assume that $\mathcal{W}_\epsilon$ consists only of cubes that have at least one face shared with another cube in $\mathcal{W}_\epsilon$, as the statement follows from known results for isolated cubes. Then there is $k_{\max} \in \mathbb{Z}$ such that $\mathcal{W}_\epsilon \subset \bigcup_{k \leq k_{\max}} \mathcal{W}_k$ and all cubes in $\mathcal{W}_\epsilon$ have vertices in the $2^{-k_{\max}}\mathbb{Z}^n$-lattice. Let $x \in \partial \overline{ \bigcup_{Q \in \mathcal{W}_\epsilon} Q}$ and $v$ be any direction of a Billard. Then, up to lattice symmetry,  the set of reflection points of such a trajectory in $ \overline{ \bigcup_{Q \in \mathcal{W}_\epsilon} Q}$ is contained in $\{x+tv:t \in \mathbb{R}\backslash\{0\}\}$. Up to lattice symmetry, this set contains the origin $x$ iff $v$ is rational up to normalisation.
    \end{proof}
 \section{Estimates of the first non-trivial eigenvalue and of counting functions}\label{sec:mainresults}
 From \cite{NetrusovSafarov2005} by Netrusov and Safarov one can conclude that for bounded variation domains $\Omega$ one has $N_N(\Omega,t) = C_W ^{(n)} t^{n/2} + \mathcal{O}(t^{\delta/2})$, where $\delta := \overline{\dim_M}(\partial \Omega,\Omega)$ is the upper inner Minkowski dimension. The aim of this section is to obtain an analogue asymptotic result on $N_{N}(\Omega,t)$ for more general domains, with focus on domains with self-similar boundary, thus generalising the results and partially resolving a question raised in \cite[Sec.~5.4.2]{NetrusovSafarov2005} in Thm.~\ref{thm:wellcovered-bv} and Rem.~\ref{rem:question}. Key for obtaining such an asymptotic result for $N_N(\Omega,t)$ is the construction of a foliation in $\Omega_{-\epsilon}$. 
\subsection{Foliations}\label{subsec:foliation}
The rough procedure for obtaining asymptotic results on $N_N(\Omega,t)$ is to partition $\Omega$ into $\Omega_{-\epsilon}$ and $\Omega\setminus \Omega_{-\epsilon}$. For $\Omega\setminus \Omega_{-\epsilon}$ we will use Whitney covers, and for $\Omega_{-\epsilon}$ we will use paths that connect points which are close to the boundary $\partial\Omega$ to points located in a more controlled region away from the boundary (close to $\partial \Omega_{-\epsilon}$). If a family of such paths partitions a domain, we will call it a \emph{foliation} of the domain. If the corresponding volume form is bounded in a certain sense (see Def.~\ref{def:wellfoliated}), such a family of paths allows an estimate of the second Neumann eigenvalue of the domain. Following an idea by Brolin in \cite{brolin1965}, we develop a method to construct foliations for domains whose boundary locally admits IFS structure. 
In the below, we will restrict ourselves to the two-dimensional setting and impose some assumptions for ease of exposition. Note that generalisations to higher dimensions as well as more general settings of IFS can be formulated (see for example Rem.~\ref{rem:conformal}).\par
To state our assumptions, let us recall some notions. By an \emph{iterated function system (IFS)} we understand a family $\Phi = \{\phi_i\}_{i \in \Sigma}$ of contracting maps $\phi_i\colon\mathbb R^n\to\mathbb R^n$ with finite index set $\Sigma$. The \emph{limit set} associated to the IFS $\Phi$ is the unique non-empty compact set satisfying $M=\bigcup_{i\in\Sigma}\phi_i(M)=:\Phi(M)$. The limit set is called \emph{self-similar} if all the $\phi_i$  are \emph{similarities}, i.\,e.\ $| \phi_i(x)-\phi_i(y) | = r_i| x-y|$ for any $x,y\in\mathbb R^2$ and some $r_i\in(0,1)$. We say that $\Phi$ satisfies the \emph{open set condition (OSC)} if there exists an open and bounded $O\subset\mathbb R^2$ such that $\phi_i(O)\subset O$ and $\phi_i(O)\cap \phi_j(O)=\emptyset$ for $i\neq j\in\Sigma$.\par
The domains $\Omega \subset \mathbb{R}^2$ that we consider here shall satisfy the following.
$\partial \Omega$ has a (local) self-similar IFS structure, i.\,e.\ $\partial \Omega$ is a finite union of self-similar sets. Suppose that $M \subset \partial \Omega$ is the limit set of an IFS $\Phi = \{\phi_i\}_{i \in \Sigma}$ with $\Sigma :=\{1,\cdots,m\}$ satisfying the open set condition with all $\phi_i$ being similarities.
There exists a closed interval $I_0$ embedded in some affine plane $\mathbb{R}^{2-1}$, such that $\Phi^k I_0$ converges to $\neig$ in the Hausdorff metric as $k\to\infty$. 
By applying translation, rotation and scaling we can assume without loss of generality that $I_0=[0,1]\times \{0\}\subset\mathbb R^2$. Suppose that $\Phi(I_0)\subset \mathbb R\times [0,\infty)$ 
and that $\phi_i(I_0)\cap \phi_j(I_0)\neq\emptyset$ if and only if $j\in\{i-1,i+1\}$. In other words the maps $\phi_i$ are numbered in such a way that only neighbouring $\phi_i(I_0)$ intersect and we assume that the intersection is a singleton. 
We introduce a foliation (called \emph{seed foliation}) of the domain $G_1$ bounded by $I_1:=\Phi(I_0)$ and $I_0$ for which any fibre runs from $I_0$ to $\Phi(I_0)$. This gives rise to a map $b_1\colon I_0 \to \Phi(I_0)$ that maps $x \in I_0$ to the corresponding point in $\Phi(I_0)$ that lies in the same fibre as $x$, see Fig.~\subref{fig:KochSeed} for an example.\par
\begin{figure}[h!]
    \centering
    \begin{subfigure}[t]{0.45\textwidth}
        \centering
        \begin{tikzpicture}[
 scale=2.3,decoration=Koch snowflake]
    \draw[dashed,line width=0.01mm] decorate{ (0,0) -- (3,0) };
    \draw[fill=black!20!white!40!,line width=0.01mm,dashed] decorate{ (0,0) -- (3,0) };
    \draw[line width=0.5mm] (0,0) -- (3,0);
    \node[scale=1,above] at (0.5,0) {\small$\phi_1(I_0)$};
    \node[scale=1,above] at (1.13,0.5) {\small$\phi_2(I_0)$};
    \node[scale=1,above] at (1.87,0.5) {\small$\phi_3(I_0)$};
    \node[scale=1,above] at (2.5,0) {\small$\phi_4(I_0)$};
    \foreach \k in 
    {0,5,10,15,20,25,30,35,40}{
        \draw[gray] (1.5+\k/80,0) -- (1.5+\k/80,{-sqrt(3)*(1.5+\k/80)+2*sqrt(3)});
        \draw[gray] (1.5-\k/80,0) -- (1.5-\k/80,{-sqrt(3)*(1.5+\k/80)+2*sqrt(3)});
        }
    \foreach \m in {5,10,...,240}{
        \draw[dotted] (\m/80,0) -- (\m/80,-0.2);
    }
    \draw (0,0) -- (0,-0.2) -- (3,-0.2) -- (3,0);
    \node at (0.75,-0.1) {\small$E$};
    \node at (1.5,-0.08) {\small$I_0$};
\end{tikzpicture}
        \subcaption{Example of a seed foliation in $G_1$.
It connects $I_0$ (a unit interval) and $\Phi(I_0)$ (the first iteration of the Koch curve, dashed) given by its classic four maps $\{\phi_i\}_{i=1,\ldots,4}$ in a pre-described manner. Additionally a box $E$ has been added underneath $I_0$ as needed in Lem.~\ref{lem:ewestimate} and the fibres have been extended (in dotted lines) through $E$.}\label{fig:KochSeed}
    \end{subfigure}%
    \qquad
    \begin{subfigure}[t]{0.45\textwidth}
        \centering
        \begin{tikzpicture}[scale=0.0245]
 
    \def\x{2};
    \def\y{5};
    \draw[thick] (\x*\x,0) -- (\x*\x+2*\x*\y+\y*\y,0);
    \draw[thick] (\x*\x+4*\x*\y+4*\y*\y,0) -- (\x*\x+6*\x*\y+9*\y*\y,0);
    \draw[domain=0:\y, smooth, variable=\t] plot ({(\x+\y+\t/2)*(\x+\y+\t/2)-\t*\t*3/4)}, {sqrt(3)*\t*(\x+\y+\t/2)});
    \draw[domain=0:\y, smooth, variable=\t] plot ({(\x+2*\y-\t/2)*(\x+2*\y-\t/2)-\t*\t*3/4)}, {sqrt(3)*\t*(\x+2*\y-\t/2)});
     \draw[thick] (\x*\x,0) -- (\x*\x,-3*\y) -- (\x*\x+6*\x*\y+9*\y*\y,-3*\y) -- (\x*\x+6*\x*\y+9*\y*\y,0);
     \foreach \m in {1,9,...,280}{
         \draw[dotted,  thick] (\x*\x+\m,0) -- (\x*\x+\m,-3*\y);}
     \foreach \m in {4,12,...,40}{
         \draw[thick, domain=0:sqrt(3)*(sqrt((\x+\y)*(\x+\y)+\m)-\x-\y), smooth, variable=\t] plot ({(\x+\y)*(\x+\y)+\m-\t*\t}, {2*sqrt((\x+\y)*(\x+\y)+\m)*\t});}
     \foreach \m in {44,52,...,92}{
         \draw[thick, domain=0:sqrt(3)*(\x+2*\y-sqrt((\x+\y)*(\x+\y)+\m)), smooth, variable=\t] plot ({(\x+\y)*(\x+\y)+\m-\t*\t}, {2*sqrt((\x+\y)*(\x+\y)+\m)*\t});}
\end{tikzpicture}
        \caption{Example of a seed foliation of the conformally distorted Koch curve described in Rem.~\ref{rem:conformal} with $I_0=[2,17]$ under the conformal map $f\colon\mathbb R^2\setminus B_1(0)\to \mathbb R^2\setminus B_1(0)$, $(x,y)\mapsto(x^2-y^2,2xy)$.}\label{fig:conf-Koch-seed}
    \end{subfigure}
\end{figure}
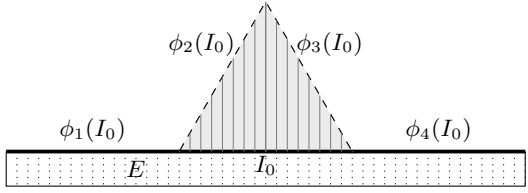
Next, we show how the IFS leads to a foliation of $\bigcup_{k \in \mathbb{N}} \overline{\Phi^k(G_1)}$. For $k\geq 1$, let $G_k$ be the domain bounded by $I_0$ and $\Phi^k(I_0)$ and let $H_k$ be the closure of the domain bounded by $\Phi^{k-1}(I_0)$ and $\Phi^{k}(I_0)$, i.\,e.\ $H_k = \overline{G_{k}\backslash G_{k-1}}$. Then $\Phi$ maps $H_k$ into $H_{k+1}$ and this mapping is surjective. Thus, any point $x \in \bigcup_k G_k$ lies in some $H_k$ and there is a fibre in $H_1$ whose image under some iteration of maps of the IFS runs through $x$. In order to describe the foliation, it is sufficient to describe the construction of such fibres through points in $\Phi^k(I_0)$. For such a point $x$ there is a finite word $w = (w_1w_2\cdots w_k) \in \Sigma^k$ with $x \in \phi_w(I_0) := \phi_{w_k}\phi_{w_{k-1}}\cdots \phi_{w_1}(I_0)$.\footnote{Note that this convention is useful here as it allows efficient use of the left-shift operator $\sigma$ instead of a right-shift.} This gives rise to a map $b_k ^{-1}:\Phi^k(I_0) \to \Phi^{k-1}(I_0)$ given by $\phi_w(I_0) \ni x \mapsto \phi_{\sigma w} b_1 ^{-1} \left(\phi_{\sigma w}\right)^{-1}(x) \in \phi_{\sigma w}(I_0)$ that maps each point $x \in \Phi^k(I_0)$ to the point at which the fibre through $x$ intersects $\Phi^{k-1}(I_0)$. The concatenation $b_k b_{k-1}\cdots b_1$ then constructs a fibre $\gamma_q$ from a unique $q \in I_0$ to $x$ showing that $\bigcup_k G_k$ is foliated.
\par
Note that each $\gamma_q$ is of finite length $\len\gamma_q$, which can be seen as follows.
Let $L$ denote the supremum over the lengths of all fibers that connect a point in $I_0$ to a point in $\Phi(I_0)$. The above construction ensures that the length of any fibre connecting a point in $I_0$ to a point in $\Phi^k(I_0)$ is bounded from above by $L\cdot\sum_{\ell=0}^{k-1}r^{\ell}\leq L/(1-r)$, where $r<1$ denotes the supremum over the contraction ratios of $\phi_1,\ldots,\phi_m$. As the upper bound is independent of $k$, it gives a uniform upper bound.
This observation implies that we can parameterise fibres $\gamma_q$ by $\gamma_q(t)$ for $t\in[0,\len\gamma_q]$. We let $\varphi:(q,t) \mapsto \gamma_q(t)$ denote the associated change of coordinates, let $q \in \text{int}(I_0)$ be fixed and $t>0$ be such that $x:=\gamma_q(t) \in \text{int}(H_{s+1})$ for some $s$. Further, we let $v_s$ denote the intersection point of $\Phi^s(I_0)$ with the fibre through $x$ and set $\widetilde{b_1}:= \varphi|_{\varphi^{-1}(G_1)}$. Analogously to the construction of $b_k$ we define $\widetilde{b_k}$ mapping $(q_{{k-1}},t)$ with $q_{k-1}\in\Phi^{k-1}(I_0)$ and $t$ small enough to the point in $H_k$ whose fibre runs through $q_{{k-1}}$ and has a length $t$ from $q_{{k-1}}$. Then $\varphi(q,t)=\widetilde{b_{s+1}}(b_s \cdots b_1 q,\tilde{t})$ where $\widetilde{t}$ is the length of the fibre through $q$ from $b_s\cdots b_1 q$ to $\varphi(q,t)$. Therefore $t-\widetilde{t}$ is constant for small enough variations of $t$. Supposing that $w \in \Sigma^s$ is such that $v_s\in\phi_{\omega}(I_0)\subset \Phi^s(I_0)$ the density function $\beta:=\lvert \det D\varphi \rvert$ of the change of coordinates given by $\varphi$ satisfies 
\begin{align*}
    \beta(q,t) &= \left| \det \left(  \prod_{\ell=1} ^{s} Db_{\ell}|_{b_{\ell-1}\cdots b_1(q)}\right) \right| \cdot \left|\det D\widetilde{b_{s+1}}|_{(b_s\cdots b_1 q,\tilde{t})}\right| \\
    &= \left| \det \left(  \prod_{\ell=1} ^{s} Db_1|_{(\phi_{w|_{\ell-1}}) ^{-1} b_{\ell-1}\cdots b_1(q)}\right) \right| \cdot \underbrace{\left|\det D\widetilde{b_{1}}|_{(\phi_w)^{-1}b_s\cdots b_1 q,\tilde{t}}\right|}_{\in \text{image}_{x \in G_1} \beta(x)},
\end{align*}
since all $\phi_i$ are similarities. This quantity is readily computed for triangles, which is the only relevant case here, see Fig.~\ref{fig:FasernDreieck}. 
The collection of fibres thus constructed gives a partition of the domain into rectified curves that are differentiable almost everywhere. We add a bounded domain $E$ with $I_0\subset\partial E$ (e.\,g.\ a rectangle) to which we can extend each fibre for some $t < 0$. Thus, every fibre will run through $E$ before passing $I_0$. 
We adjust the parametrisation so that any such extended path $\gamma$ is parameterised by some interval $(0,\len\gamma)$.
\begin{figure}[h!]
 \centering
 \begin{tikzpicture}
 \draw (0,0) -- (3,0) -- (4,1) -- (0,0);
 \foreach \k in {5,10,15,20,25,30,35,40,45,50,55}{
        \draw[gray] (3*\k/55,0) -- (3*\k/55+\k/55,\k/55);
 }
\node[below] at (1.5,0) {$a$};
\node[above] at (2,0.5) {$b$};
\node[right] at (3.5,0.4) {$c$};
\end{tikzpicture}
\caption{Basic case of a 'natural' foliation in a triangle. In such a situation an elementary calculation reveals that $|\det Db_1(q)| = \frac{b}{a}$.}\label{fig:FasernDreieck}
\end{figure}
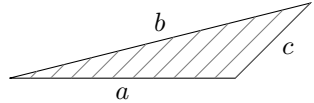
 \begin{defn}\label{def:wellfoliated}
  A domain $D$ is called \emph{well-foliated} if there exists a collection $\{\gamma\}_{\gamma \in \Gamma}$ of paths in $D$ as above with $0 < \inf_{\gamma \in \Gamma} \len(\gamma) \leq \sup_{\gamma \in \Gamma} \len(\gamma) < L$ such that
  the density $\beta$ is bounded in the sense that
  \begin{align} 
   &\essinf_{x \in D} \beta(x) > 0\ \text{and}\label{eq:betacond1}\\
   &\sup_{\gamma \in \Gamma} \int_0 ^{\len(\gamma)} \beta(\gamma,t)dt < \infty.\label{eq:betacond2}
  \end{align}
  In this setting, let $r \in (0,\inf_{\gamma \in \Gamma} \len\gamma)$ be fixed and define $E:=\{\gamma(t):\gamma \in \Gamma,0 < t < r\}$. For $S \subset D$, we write $\beta_{\inf}(S):=\essinf_{x \in S} \beta(x)$ and $\mathcal{I}_\beta(S):=\sup_{\gamma \in \Gamma} \int_0 ^{\len \gamma} 1_S(\gamma(t))\beta(\gamma(t))dt$.
 \end{defn}
 Note that we can identify a path $\gamma \in \Gamma$ with its intersection point $\gamma(0) = q \in I_0$. 
 \begin{lem}\label{lem:ewestimate}
  Let $D \subset \mathbb{R}^n$ be a domain that is well-foliated by $\{\gamma\}_{\gamma \in \Gamma}$ as in Def.~\ref{def:wellfoliated}. Then for all $\alpha>0$
  \begin{align*}
   \lambda_2 ^N(D) &\geq \left(\lambda_2 ^N (E)^{-1} \left(1  + \frac{1+\alpha}{r}\frac{ \mathcal{I}_\beta {(D\backslash E)} }{\beta_{\inf}{(E)} }\right) + \left(1+\alpha^{-1}\right) L \frac{\mathcal{I}_\beta (D\backslash E) }{\beta_{\inf}(D)} \right)^{-1}.
  \end{align*}
 \end{lem}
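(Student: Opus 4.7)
By Cor.~\ref{cor:minmax2} and the standard Rayleigh-quotient characterisation, it suffices to exhibit a constant $L_D$ equal to the bracketed expression and to show that $\|u-\bar u_E\|_{L^2(D)}^2\leq L_D\|\nabla u\|_{L^2(D)}^2$ for every $u\in H^1(D)$, where $\bar u_E:=\vol_n(E)^{-1}\int_E u\,dx$. Indeed, setting $u_0:=u-\bar u_E$ one has $\nabla u_0=\nabla u$ and $\int_E u_0=0$, and for any $u\in H^1(D)\cap 1^\perp$ the optimality of the mean gives $\|u\|_{L^2(D)}^2=\|u-\bar u_D\|_{L^2(D)}^2\leq\|u_0\|_{L^2(D)}^2$. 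The plan is therefore to prove the inequality for $u_0$ and recover the statement.

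The core step is to control $\|u_0\|_{L^2(D\setminus E)}^2$ by means of the foliation. For $x\in D\setminus E$ with fibre $\gamma=\gamma_x$ and parameter $t_x$, and any basepoint $t_0\in(0,r)$, the fundamental theorem of calculus along $\gamma$ yields
\begin{equation*}
u_0(x)=u_0(\gamma(t_0))+\int_{t_0}^{t_x}\nabla u_0(\gamma(s))\cdot\gamma'(s)\,ds.
\end{equation*}
Averaging this identity over $t_0\in(0,r)$ and applying the elementary inequality $(a+b)^2\leq(1+\alpha)a^2+(1+\alpha^{-1})b^2$ gives, after Jensen on the first summand and Cauchy--Schwarz together with $\len\gamma\leq L$ on the second,
\begin{equation*}
|u_0(x)|^2\leq\frac{1+\alpha}{r}\int_0^r|u_0(\gamma(t_0))|^2\,dt_0+(1+\alpha^{-1})L\int_0^{\len\gamma}|\nabla u_0(\gamma(s))|^2\,ds.
\end{equation*}

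The next step is to integrate this pointwise bound over $x\in D\setminus E$ and change variables via $x=\varphi(q,t_x)$, $dx=\beta(q,t_x)\,dq\,dt_x$. In the first summand, the integral over $t_x\in(r,\len\gamma)$ of $\beta(q,t_x)$ is bounded by $\mathcal I_\beta(D\setminus E)$, and the inner integral over $(0,r)$ is turned into $\|u_0\|_{L^2(E)}^2$ at the cost of a factor $\beta_{\inf}(E)^{-1}$. In the second summand, the same $\mathcal I_\beta(D\setminus E)$ factor appears from the $t_x$-integral, while the remaining integrand is raised to $\|\nabla u_0\|_{L^2(D)}^2$ at the cost of $\beta_{\inf}(D)^{-1}$. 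This yields
\begin{equation*}
\|u_0\|_{L^2(D\setminus E)}^2\leq\frac{(1+\alpha)\,\mathcal I_\beta(D\setminus E)}{r\,\beta_{\inf}(E)}\|u_0\|_{L^2(E)}^2+\frac{(1+\alpha^{-1})L\,\mathcal I_\beta(D\setminus E)}{\beta_{\inf}(D)}\|\nabla u_0\|_{L^2(D)}^2.
\end{equation*}

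Finally, one adds $\|u_0\|_{L^2(E)}^2$ to both sides and invokes the Poincar\'e--Wirtinger inequality on $E$: since $\int_E u_0=0$, Cor.~\ref{cor:minmax2} gives $\|u_0\|_{L^2(E)}^2\leq\lambda_2^N(E)^{-1}\|\nabla u_0\|_{L^2(E)}^2\leq\lambda_2^N(E)^{-1}\|\nabla u_0\|_{L^2(D)}^2$. Substituting produces exactly $\|u_0\|_{L^2(D)}^2\leq L_D\|\nabla u\|_{L^2(D)}^2$ with the claimed $L_D$, and Cor.~\ref{cor:minmax2} concludes the proof. The main subtlety is the choice to renormalise with respect to the mean on $E$ rather than on $D$, which is what makes the Poincar\'e step on $E$ applicable and is the reason the inequality $\|u\|_{L^2(D)}^2\leq\|u_0\|_{L^2(D)}^2$ for $u\in 1^\perp$ is needed at the end; once this is in place, the $\beta$-bounds \eqref{eq:betacond1} and \eqref{eq:betacond2} together with the uniform length bound $L$ make the change-of-variables calculation bookkeeping rather than analysis.
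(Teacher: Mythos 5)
Your proposal is correct and follows essentially the same route as the paper's own proof: both hinge on the Poincar\'e--Wirtinger reduction $\|u\|_{L^2(D)}^2\le\|u-\overline{u_E}\|_{L^2(D)}^2$ from Cor.~\ref{cor:minmax2}, split $D$ into $E$ and $D\setminus E$, pass from $D\setminus E$ back to $E$ by averaging along fibres over $t_0\in(0,r)$ with the $\alpha$-parametrised inequality and Jensen/Cauchy--Schwarz plus $\len\gamma\leq L$, use $\beta_{\inf}$ and $\mathcal{I}_\beta(D\setminus E)$ to convert the fibre integrals to $L^2$-norms, and conclude with the Neumann Poincar\'e inequality on $E$. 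The only cosmetic difference is bookkeeping: you estimate $\|u_0\|_{L^2(D\setminus E)}^2$ in terms of $\|u_0\|_{L^2(E)}^2$ first and then invoke $\lambda_2^N(E)$ once at the end, whereas the paper applies the Poincar\'e inequality on $E$ separately to $I_1$ and to $I_2'$; both yield the identical constant.
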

 \begin{proof}
 Let $u \in H^1(D) \cap 1^\perp$ and set $\overline{u_E} := \frac{1}{\vol E}\int_E udx$. Then $\|u \|^2 _{L^2(D)} \leq \|u - \overline{u_E}\|^2 _{L^2(D)}$. In particular, by Cor.~\ref{cor:minmax2}, it suffices to show the following Poincar\'{e}-Wirtinger inequality
  \begin{align}
  \|u-\overline{u_E}\|_{L^2(D)} ^2 \leq C\|\nabla u\|_{L^2(D)} ^2  , \label{eq:1n}
 \end{align}
 with $C^{-1}$ given by the right hand side of the claim. To this end, we subdivide the expression
 \begin{align*}
  \|u-\overline{u_E}\|_{L^2(D)} ^2 = \underbrace{\int_{E} |u(y) - \overline{u_E}|^2 dy}_{:=I_1} + \underbrace{\int_{D\backslash E} |u(x) - \overline{u_E}|^2 dx}_{:=I_2},
 \end{align*}
 and estimate $I_1$ and $I_2$ separately. By Cor.~\ref{cor:minmax2}, one has
 \begin{align}
  I_1 \leq \lambda^N _2(E) ^{-1}\|\nabla u\|_{L^2(E)} ^2 \leq \lambda^N _2 (E) ^{-1} \|\nabla u\|_{L^2(D)} ^2. \label{eq:esti1}
 \end{align}
 Recall that for any $a,b,c \in \mathbb{C}$ and $\alpha>0$, there is the following inequality: $|a-b|^2 \leq (1+\alpha)|a-c|^2 + (1+\alpha^{-1})|b-c|^2$. We now consider
 \begin{align*}
  I_2 &= \int_{D \backslash E} |u(x) - \overline{u_E}|^2dx = \int_\Gamma \int_{r} ^{\len \gamma} |u(\gamma(t)) - \overline{u_E}|^2 \beta(\gamma,t) dt d\gamma,\\
  &= \frac{1}{r} \int_\Gamma \int_{r} ^{\len \gamma} \int_0 ^{r} |u(\gamma(t)) - \overline{u_E}|^2\beta(\gamma,t) dt' dt d\gamma\\
  &\leq \underbrace{\frac{1+\alpha}{r} \int_\Gamma \int_{r} ^{\len \gamma} \int_0 ^{r} |u(\gamma(t')) - \overline{u_E}|^2\beta(\gamma,t) dt' dt d\gamma}_{=:I_2 '}\\
  &\qquad\qquad+ \underbrace{\frac{1+\alpha^{-1}}{r}\int_\Gamma \int_{r} ^{\len \gamma} \int_0 ^{r} |u(\gamma(t)) - u(\gamma(t'))|^2\beta(\gamma,t) dt' dt d\gamma}_{=:I_2 ''},
 \end{align*}
 which holds for any $\alpha >0$.  By assumption, $\int_{r} ^{\len \gamma} \beta(\gamma,t)dt \leq \mathcal{I}_\beta(D\backslash E)$ for all $\gamma \in \Gamma$. Hence
 \begin{align*}
  I_2 ' &= \frac{1+\alpha}{r} \int_\Gamma \int_{r} ^{\len \gamma} \int_0 ^{r} |u(\gamma(t')) - \overline{u_E}|^2\beta(\gamma,t) dt' dt d\gamma\\
  &\leq \frac{1+\alpha}{r}\mathcal{I}_\beta {(D\backslash E) } \int_\Gamma \int_{0} ^{r} |u(\gamma(t')) - \overline{u_E}|^2 dt' d\gamma \leq \frac{1+\alpha}{r}\frac{\mathcal{I}_\beta {(D\backslash E)} }{\beta_{\inf} {(E)}} \int_\Gamma \int_{0} ^{r} |u(\gamma(t')) - \overline{u_E}|^2 \beta(\gamma,t') dt' d\gamma\\
  &= \frac{1+\alpha}{r}\frac{\mathcal{I}_\beta {(D\backslash E)} }{\beta_{\inf} {(E)}} \|u - \overline{u_E} \|^2 _{L^2(E)} \leq \lambda_2 ^N(E) ^{-1} \frac{1+\alpha}{r}\frac{\mathcal{I}_\beta {(D\backslash E)} }{\beta_{\inf} {(E)}} \|\nabla u\|^2 _{L^2(D)},
 \end{align*}
 and
 \begin{align*}
  I_2 '' &= \frac{1+\alpha^{-1}}{r}\int_\Gamma \int_{r} ^{\len \gamma} \int_0 ^{r} |u(\gamma(t))-u(\gamma(t'))|^2\beta(\gamma,t) dt' dt d\gamma\\
  &=\frac{1+\alpha^{-1}}{r}\int_\Gamma \int_{r} ^{\len \gamma} \int_0 ^{r} \left| \int_{t'} ^t \partial_s u(\gamma(s))|_{s=\sigma}d\sigma \right|^2\beta(\gamma,t) dt' dt d\gamma \\
  &\stackrel{(\dagger)}{\leq} \frac{1+\alpha^{-1}}{r}\int_\Gamma \int_{r} ^{\len \gamma} \int_0 ^{r} (t-t') \int_{t'} ^t \left|\partial_s u(\gamma(s))|_{s=\sigma}\right|^2d\sigma \beta(\gamma,t) dt' dt d\gamma \\
  &\leq \frac{1+\alpha^{-1}}{r}\int_\Gamma \int_{r} ^{\len \gamma} \int_0 ^{r} (t-t') \int_{t'} ^t \left|\nabla u(x)|_{x=\gamma(\sigma)}\right|^2d\sigma \beta(\gamma,t) dt' dt d\gamma \\
  &\leq \frac{1+\alpha^{-1}}{r}\int_\Gamma \int_{r} ^{\len \gamma} \int_0 ^{r} L \int_{0} ^{\len \gamma} \left|\nabla u(x)|_{x=\gamma(\sigma)}\right|^2d\sigma \beta(\gamma,t) dt' dt d\gamma \,,\qquad \text{ as }t-t'\leq L\\
  &\leq (1+\alpha^{-1})L \mathcal{I}_\beta (D\backslash E) \int_\Gamma \int_{0} ^{\len \gamma} \left|\nabla u(x)|_{x=\gamma(\sigma)}\right|^2d\sigma d\gamma \\
  &\leq (1+\alpha^{-1})L \frac{\mathcal{I}_\beta (D\backslash E) }{\beta_{\inf}{(D)}} \int_\Gamma \int_{0} ^{\len \gamma} \left|\nabla u(x)|_{x=\gamma(\sigma)}\right|^2 \beta(\gamma,\sigma) d\sigma d\gamma = (1+\alpha^{-1})L \frac{\mathcal{I}_\beta (D\backslash E) }{\beta_{\inf}{(D)}} \| \nabla u \|^2 _{L^2(D)}.
 \end{align*}
 where we used an application of Jensen's Theorem in $(\dagger)$.\footnote{Using Jensen and $|\nabla \gamma_q(s)|^2 =1$, $|s|\leq 1$ and $|\partial_s u| = |\langle s,\nabla u\rangle| \leq |\nabla u|$,
 \begin{align*}
  |u(x)-u(y)|^2 &= \left|\int_{t_0} ^t  \partial_s u(\gamma_q(s))  ds\right|^2 \leq (t-t_0) \int_{t_0} ^t \left| \partial_s u(\gamma_q(s)) \right|^2 ds\\
  &\leq(t-t_0) \int_{t_0} ^t \left| \nabla u \right|^2 ds \leq (t-t_0) \int_0 ^{\len \gamma_q} \left|\nabla u|_{\gamma_q(s)}\right|^2 ds.
 \end{align*}}
 \end{proof}
 \begin{rem} 
 \begin{enumerate} \label{rem:conformal}
    \item Lem.~\ref{lem:ewestimate} generalises results of \cite{NetrusovSafarov2005}. A comparison with the results of \cite{NetrusovSafarov2005} will be presented in Sec.~\ref{subsec:bdvar}.
    \item Note that Lem.~\ref{lem:ewestimate} and all results below hold for the higher dimensional setting, where $\Omega\subset\mathbb R^n$ with arbitrary $n\in\mathbb N$. Merely the construction of a foliation is described only for the case $n=2$, for ease of exposition of the construction and formulation of suitable conditions.  
    \item The above construction is presented for IFS consisting of similarities for ease of exposition. However, the construction extends to more general IFS. For example, a seed foliation can be constructed for the image $f(V)$ of the Koch curve $V$ under a conformal map $f$ as shown in Fig.~\subref{fig:conf-Koch-seed}. Notice that $f(V)$ is a self-conformal set and that the corresponding conformal IFS can be used to transport the seed foliation to obtain a foliation of the region bounded between the $x$-axis and the conformal Koch curve. 
    In Fig.~\subref{fig:conf-Koch-seed}, we applied $f$ to the seed foliation of $V$ shown in Fig.~\subref{fig:KochSeed}, but note that other constructions for seed foliations are possible in more general settings.
    \end{enumerate}
 \end{rem}
 \begin{defn}\label{def:wellcovered}
  A domain $\Omega \subset \mathbb{R}^n$ is called \emph{well-covered} if there exists $\mu \in \mathbb{N}$ and $\epsilon_0 >0$ such that for any $\epsilon\in(0,\epsilon_0]$ there is a cover of $\Omega_{-\epsilon}$ by well-foliated domains $\{D_i^{\epsilon}\subset\Omega\}_{i\in I_{\epsilon}}$ satisfying $\vol_n(\Omega_{-\epsilon})=\vol_n(\Omega_{-\epsilon}\cap (\bigcup_i D_i^{\epsilon}))$ with corresponding quantities $r,L,\mathcal{I}_\beta$ (as in Def.~\ref{def:wellfoliated}) and uniformly bounded multiplicity $\leq \mu$ such that
  \begin{enumerate}
   \item \label{item:zweidef:1} There are $\delta \in \mathbb{R}$ and $C(\Omega) \in \mathbb{R}$ such that $\# I_\epsilon \leq C(\Omega) \epsilon^{-\delta}$ for all $\epsilon\in(0,\epsilon_0]$.\label{item:dreiindef}
   \item There exist constants $c_r ^\pm, c_L ^\pm, c_{\mathcal{I}} ^\pm , c_{\diam} ^\pm, c_{\vol} ^\pm >0$ such that for all $\epsilon\in(0,\epsilon_0]$ and for each $D_i ^\epsilon$ one has 
   \label{item:zweidef}
   \begin{equation*}
    \tfrac{r}{\epsilon} \in (c_r ^-, c_r ^+),\
    \tfrac{L}{\epsilon} \in (c_L ^-, c_L ^+),\
    \tfrac{\mathcal{I}_\beta}{\epsilon} \in (c_{\mathcal{I}} ^-, c_{\mathcal{I}} ^+),\
    \tfrac{\diam D_i ^\epsilon}{\epsilon} \in (c_{\diam} ^-, c_{\diam} ^+),\
    \tfrac{\vol_n(D_i ^\epsilon)}{\epsilon^n} \in (c_{\vol} ^-,c_{\vol} ^+).
   \end{equation*}
   \item \label{item:zweidef:3}$\inf_{\epsilon,i} \beta_{\inf}(D_i^{\e})>0$.
   \item \label{item:zweidef:4}For any $D_i ^\epsilon$ the corresponding $E$ must have first non-trivial Neumann eigenvalue $\lambda_2 ^N(E) \geq c_E r ^{-2}$ for a fixed $c_E$ independent of $\epsilon$.\label{item:NeumannEWvonB}
  \end{enumerate}
 \end{defn}
   To simplify notation, we sometimes use $\ll$, $\gg$ and $\asymp$ with the following meaning: For two real-valued functions $f,g$ with common domain of definition $D$ we write $f \ll g$ if there exists a constant $c\in\mathbb R$ such that $f(x) \leq cg(x)$ holds for all $x\in D$. Moreover, we write $f \asymp g$ if $f \ll g$ and $f \gg g$.
 \begin{rem}
    \begin{enumerate}
  \item Condition \ref{item:dreiindef} in Def.~\ref{def:wellcovered} is closely related to upper inner Minkowski measurabilty. To see this, notice that Def.~\ref{def:wellcovered}\ref{item:dreiindef} implies
  \begin{align*}
   \vol_n(\Omega_{-\epsilon}) \leq \sum_{i \in I_\epsilon} \vol_n D_i ^\epsilon \leq c_{\vol} ^+ \epsilon^n \# I_\epsilon \leq c_{\vol} ^+ C(\Omega) \epsilon^{n-\delta}.
  \end{align*}
  If $\delta$ coincides with the upper inner Minkowski dimension of $\partial \Omega$, the above equation implies that $\overline{\mathcal{M}}_\delta(\partial \Omega,\Omega)<\infty$. Moreover, if $\Omega$ is upper inner Minkowski measurable, then by definition of the inner upper Minkowski content, there is an error $\epsilon'(\epsilon)$ with $\limsup_\epsilon \epsilon'(\epsilon) = 0$ (cf. proof of Prop.~\ref{prop:whitneycardinality}) such that
  \begin{align*}
  \#I_\epsilon &\leq \frac{1}{\epsilon^n c_{\vol} ^-} \sum_{i \in I_\epsilon} \vol_n D_i ^\epsilon  
  \leq \frac{\mu}{c_{\vol} ^-} \vol_n \Omega_{-\epsilon {(c_{\diam} ^++1)}} \epsilon^{-n} 
  \\ 
  &\leq \frac{\mu}{c_{\vol} ^-} \overline{\mathcal{M}}_\delta(\partial \Omega,\Omega) \epsilon^{-\delta}(c_{\diam}^+ +1)^{n-\delta}\left(1+ \epsilon'(\epsilon (c_{\diam}^+ +1))\right),
  \end{align*}
  Notice that through $\epsilon'$, yet another geometric quantity of $\partial \Omega$, namely the speed of convergence of $\vol_n (\Omega_{-\epsilon}) \epsilon^{\delta -n}$ enters the final estimate.
  \item The reverse condition in Def.~\ref{def:wellcovered}.\ref{item:NeumannEWvonB} is automatically satisfied for all domains to which \eqref{eq:rayleigh} applies. More precisely, based on a result by Szeg\H{o} in \cite{szego1954}, Weinberger showed in \cite{weinberger1956} a Faber-Krahn-type isoperimetric inequality:
  \begin{align*}
   \lambda_{2} ^N(\Omega) \leq \lambda_{2} ^N(\Omega^\ast) = \frac{p_n ^2}{\left(\frac{1}{2}\diam \Omega^\ast\right) ^2} = p_n ^2 \left( \frac{\omega_n}{\vol_n \Omega} \right)^{\frac{2}{n}},
  \end{align*}
  where $\Omega^\ast$ is the ball with $\vol_n(\Omega^\ast)=\vol_n (\Omega)$, $\omega_n$ is the volume of the $n$-dimensional unit ball and $p_n$ is the first positive zero of $\left( x^{1-\frac{n}{2}}J_{\frac{n}{2}}(x) \right)'$, i.\,e.\ the smallest positive solution of $J_{\frac{n}{2}}(x) =xJ_{1+\frac{n}{2}}(x)$ where $J_m(x)$ is the $m$-th spherical Bessel function. Therefore one may replace the condition in Def.~\ref{def:wellcovered}.\ref{item:NeumannEWvonB} with $\lambda_2 ^N(E) \asymp \epsilon^{-2}$. 
  \item Bi-Lipschitz maps preserve the property of being well-covered: Let $f:\Omega \to \Omega'$ be a bi-Lipschitz map and let $\Omega$ be well-covered. Then $\Omega'$ is also well-covered. Indeed, if a family of covers $\{D_i ^\epsilon\}_{i \in I_\epsilon}$ satisfies the conditions of Def.~\ref{def:wellcovered}, then $\{f(D_i ^\epsilon)\}_{i \in I_\epsilon}$ can be used to cover $(f\Omega)_{-\epsilon'}$ with $\epsilon \asymp \epsilon'$.
  \end{enumerate}
  \end{rem}
 \begin{cor}[\textquotedblleft{}$\lambda_2 ^N(D) \asymp \diam(D)^{-2}$\textquotedblright{}]\label{cor:ewestimate}
 Let $\Omega$ be well-covered and let $\{D^\epsilon _i \}_{i \in I_\epsilon}$ be a cover of $\Omega_{-\epsilon}$ consisting of well-foliated domains as in Def.~\ref{def:wellcovered}. Then by Lem.~\ref{lem:ewestimate},
 \begin{align*}
  \lambda_2 ^N(D^\epsilon _i) \geq C_1(\Omega_{-\epsilon})\epsilon^{-2}
 \end{align*}
\end{cor}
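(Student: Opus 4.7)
The plan is to apply Lem.~\ref{lem:ewestimate} directly to each $D_i^{\epsilon}$ and to replace each of the five ingredients on the right-hand side, namely $\lambda_2^N(E)^{-1}$, $r$, $L$, $\mathcal{I}_\beta(D_i^{\epsilon}\setminus E)$ and $\beta_{\inf}$, by its uniform $\epsilon$-rescaling provided in Def.~\ref{def:wellcovered}. After fixing an arbitrary auxiliary parameter $\alpha>0$ (say $\alpha=1$) in Lem.~\ref{lem:ewestimate}, one sees that showing $\lambda_2^N(D_i^{\epsilon})^{-1}\ll \epsilon^{2}$ uniformly in $i\in I_{\epsilon}$ and $\epsilon\in(0,\epsilon_0]$ is all that is required.

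For the first summand in the upper bound given by Lem.~\ref{lem:ewestimate}, I would combine Def.~\ref{def:wellcovered}\ref{item:zweidef:4} with the upper bound $r\leq c_r^+\epsilon$ from Def.~\ref{def:wellcovered}\ref{item:zweidef} to obtain $\lambda_2^N(E)^{-1}\leq c_E^{-1}(c_r^+)^2\epsilon^2$. The bracketed factor is then handled using $r\geq c_r^-\epsilon$, $\mathcal{I}_\beta(D_i^{\epsilon}\setminus E)\leq c_\mathcal{I}^+\epsilon$, together with the lower bound $\beta_{\inf}(E)\geq \beta_{\inf}(D_i^{\epsilon})\geq \beta_0:=\inf_{\epsilon',j}\beta_{\inf}(D_j^{\epsilon'})>0$, where the first inequality follows from $E\subset D_i^{\epsilon}$ and the second from Def.~\ref{def:wellcovered}\ref{item:zweidef:3}. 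Together these estimates yield a uniform $O(\epsilon^2)$ bound for the first summand. The second summand is treated in the same spirit, via $L\leq c_L^+\epsilon$ and the bounds on $\mathcal{I}_\beta$ and $\beta_{\inf}$ just established, again producing an $O(\epsilon^2)$ contribution.

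Adding the two estimates gives $\lambda_2^N(D_i^{\epsilon})^{-1}\leq C(\Omega)\epsilon^2$ with a constant $C(\Omega)$ depending only on $c_E$, $c_r^\pm$, $c_L^+$, $c_\mathcal{I}^+$, $\beta_0$ and the fixed $\alpha$, all of which are independent of $i$ and $\epsilon$. Taking reciprocals yields the claim with $C_1(\Omega_{-\epsilon}):=C(\Omega)^{-1}$; in fact $C_1(\Omega_{-\epsilon})$ is independent of $\epsilon$, which in particular gives the strict positivity $\inf_{\epsilon>0}C_1(\Omega_{-\epsilon})>0$ needed for the introductory version of the statement. I do not expect any real obstacle: the whole argument is bookkeeping of uniform constants, with the only non-trivial observation being the uniform lower bound on $\beta_{\inf}(E)$, which is an immediate consequence of $E\subset D_i^{\epsilon}$ and Def.~\ref{def:wellcovered}\ref{item:zweidef:3}.
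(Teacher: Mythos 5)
Your plan is correct and follows exactly the paper's route: apply Lem.~\ref{lem:ewestimate} to each $D_i^\epsilon$, and control the five ingredients on the right-hand side by the uniform $\epsilon$-scalings in Def.~\ref{def:wellcovered}\ref{item:zweidef}--\ref{item:zweidef:4}, using $\beta_{\inf}(E)\geq\beta_{\inf}(D_i^\epsilon)$ since $E\subset D_i^\epsilon$. The only difference from the paper's explicit $C_1(\Omega_{-\epsilon})$ is that you fix $\alpha=1$, whereas the paper optimizes the auxiliary parameter in Lem.~\ref{lem:ewestimate} to $\alpha=\sqrt{c_E c_L^+ c_r^-}/c_r^+$ (and keeps $\inf_{i\in I_\epsilon}\beta_{\inf}$ rather than your global $\inf_{\epsilon',j}\beta_{\inf}$); both choices yield the same $\epsilon^{-2}$ scaling and $\inf_{\epsilon>0}C_1>0$, so this is a bookkeeping refinement, not a gap.
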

  \subsection{Sets of bounded variation as well-foliated domains}\label{subsec:bdvar}
  In this section we show that the class of well-covered domains contains the class of domains of bounded variation. For the class of domains of bounded variation bounds on Laplace eigenvalue counting functions have been found in \cite{NetrusovSafarov2005}.
  \begin{defn}[Def.~1.1-1.2 in \cite{NetrusovSafarov2005}]
   Let $\Omega' \subset \mathbb{R}^{n-1}$ be a domain and let $f \in C^0(\overline{\Omega'},\mathbb{R})$. We define the \emph{oscillation of $f$ on $\Omega'$} as
   \begin{align*}
    \Osc(f,\Omega') := \frac{1}{2}\left( \sup_{x \in \Omega'} f(x) - \inf_{x \in \Omega'} f(x) \right).
   \end{align*}
  Let $Q_n$ be an $n$-dimensional cube with arbitrary size whose edges are parallel to the axes.
  \begin{enumerate}
   \item Let $f:Q_n \to \mathbb{R}$ be bounded. Then for any $\delta > 0$, we define $\mathcal{V}_\delta(f,Q_n)$ as the maximal number of disjoint open cubes $Q_{n,i} \subset Q_n$ whose edges are parallel to coordinate axes with $\Osc(f,Q_{n,i}) \geq \delta$ for all $i$. If $\Osc(f,Q_n)< \delta$, we set $\mathcal{V}_\delta(f,Q_n):=1$.
   \item Let $\tau: \mathbb{R}_{>0} \to \mathbb{R}_{>0}$ be non-decreasing. The space spanned by the continuous functions $f:\overline{Q_n} \to \mathbb{R}$ defined on some cube $Q_n$ such that $\mathcal{V}_{1/t}(f,Q_n) \leq \tau(t)$ for all $t$, is called the set of functions with $(\tau,\infty)$-bounded variation and denoted by $\BV_{\tau,\infty}(Q_n)$.
  \end{enumerate}
  For a domain $\Omega \subset \mathbb{R}^n$, we write $\Omega \in \BV_{\tau,\infty}$ iff for any point $p \in \partial \Omega$ there is an open neighbourhood $U_p \subset \mathbb{R}^n$ such that (up to an $SO(n)$), $U_p \cap \Omega$ is the set of points below the graph of some $f \in BV_{\tau,\infty}(Q_{n-1})$ and a constant function $b< \inf f$. 
  \end{defn}
  \begin{thm}
   Let $\tau(t) \asymp t^\delta$ for some $\delta \geq 1$. Then the set of well-covered domains contains $\BV_{\tau,\infty}$ as a proper subset.
   \label{thm:wellcovered-bv}
  \end{thm}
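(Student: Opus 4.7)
The plan is to verify that every $\Omega \in \BV_{\tau,\infty}$ satisfies conditions~\ref{item:zweidef:1}--\ref{item:zweidef:4} of Def.~\ref{def:wellcovered}, and then exhibit a single well-covered domain whose boundary is not locally a hypograph. Compactness of $\partial\Omega$ yields a finite cover by neighbourhoods $U_{p_j}$ such that each $U_{p_j}\cap\Omega$, after an $SO(n)$-rotation (which is bi-Lipschitz and thus preserves being well-covered, by the preceding remark), equals a hypograph $H_j = \{(x',y) : x' \in Q_{n-1},\ b < y < f_j(x')\}$ with $f_j \in \BV_{\tau,\infty}(Q_{n-1})$. Gluing local covers with a multiplicity bounded by $\#\{U_{p_j}\}$, it suffices to handle one such $H = H_j$ in isolation.

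To cover $H_{-\epsilon}$, I would partition the base $Q_{n-1}$ into $(n-1)$-cubes of edge length $\epsilon$; for each such $q$, I slice the vertical strip $q\times\mathbb R$ intersected with $H_{-\epsilon}$ into a stack of $\lceil(\Osc(f,q)+\epsilon)/\epsilon\rceil$ axis-aligned $n$-boxes of side $\asymp\epsilon$. On each box $D_i^\epsilon$ the trivial vertical-line foliation (fibres of length $\asymp \epsilon$, density $\beta\equiv 1$) satisfies Def.~\ref{def:wellfoliated}. Taking $E\subset D_i^\epsilon$ to be a subbox of side $\asymp\epsilon$, separation of variables gives $\lambda_2^N(E)\asymp \epsilon^{-2}$. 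These choices immediately yield Def.~\ref{def:wellcovered}\ref{item:zweidef}--\ref{item:zweidef:4}, with all ratios $r/\epsilon$, $L/\epsilon$, $\mathcal{I}_\beta/\epsilon$, $\diam(D_i^\epsilon)/\epsilon$ and $\vol_n(D_i^\epsilon)/\epsilon^n$ uniformly bounded away from $0$ and $\infty$.

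The crux is the count condition~\ref{item:zweidef:1}. Using the distribution-function identity together with the bound $\#\{q : \Osc(f,q) > \alpha\} \leq \mathcal V_\alpha(f,Q_{n-1}) \leq \tau(1/\alpha) \ll \alpha^{-\delta}$ valid for $\alpha \geq \epsilon$, one estimates
\begin{align*}
\#I_\epsilon &\ll \epsilon^{-(n-1)} + \epsilon^{-1}\sum_q \Osc(f,q) \\
&\ll \epsilon^{-(n-1)} + \epsilon^{-1} \int_\epsilon^{\diam H} \alpha^{-\delta}\, d\alpha \ll \epsilon^{-\delta},
\end{align*}
using $\delta \geq n-1 \geq 1$; the borderline case $\delta = 1$ in $\mathbb R^2$ is handled by the classical BV bound that controls $\sum_q\Osc(f,q)$ by a constant. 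This step, which essentially recasts a covering lemma of \cite{NetrusovSafarov2005} in the language of well-covered domains, is the main technical obstacle.

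For strict containment, I would exhibit $\Omega := B_1(0)\setminus([0,\tfrac12]\times\{0\}) \subset \mathbb R^2$. Its inner $\epsilon$-neighbourhood $\Omega_{-\epsilon}$ admits a cover by $\asymp\epsilon^{-1}$ well-foliated boxes along $\partial B_1(0)$ together with $\asymp\epsilon^{-1}$ thin boxes on each side of and around the tip of the slit, so $\Omega$ is well-covered with $\delta=1$. However, any neighbourhood of an interior slit point meets $\partial\Omega$ in two arcs accessible from opposite sides, so $U_p \cap \Omega$ cannot equal the region between a hyperplane and a single continuous graph; hence $\Omega \notin \BV_{\tau,\infty}$. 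Analogous modifications, attaching a slit to a well-covered fractal boundary, produce proper examples at any prescribed $\delta\geq 1$.
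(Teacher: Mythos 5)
Your proof of the inclusion follows essentially the same route as the paper's: cover $\partial\Omega$ by finitely many hypograph neighbourhoods (compactness), foliate each hypograph by vertical lines so that $\beta\equiv1$ and conditions \eqref{eq:betacond1}--\eqref{eq:betacond2} are trivial, take $E$ a sub-box, and then establish the cardinality bound. The one substantive difference is in how you obtain the cardinality bound for Def.~\ref{def:wellcovered}\ref{item:zweidef:1}: the paper imports it directly from \cite[Thm.~3.5, Cor.~3.8]{NetrusovSafarov2005}, citing
$\#I_\epsilon \ll \mathcal{V}_{\epsilon/2}(f,Q_{n-1}) + \epsilon^{-n}\vol_n(\Omega_{-\epsilon}) + \epsilon^{-1}\int_1^{1/\epsilon}t^{-2}\tau(t)\,dt \ll \epsilon^{-\delta}$, whereas you re-derive the same bound via the distribution-function identity for $\sum_q\Osc(f,q)$ and the estimate $\#\{q:\Osc(f,q)\geq\alpha\}\leq\mathcal{V}_\alpha(f,Q_{n-1})\ll\alpha^{-\delta}$. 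Both routes hit the same borderline difficulty at $\delta=1$ (a spurious logarithm), and both resolve it by falling back on classical BV; your version is self-contained where the paper's is a black-box citation, so the trade-off is transparency versus length.

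The larger divergence is in the strict-containment example. The paper simply points to the Koch snowflake $K$: it is not locally a graph so $K\notin\BV_{\tau,\infty}$, and the full analysis in Sec.~\ref{subsec:application} already establishes that $K$ is well-covered, so no additional work is needed. You instead propose the slit disk $B_1(0)\setminus([0,\tfrac12]\times\{0\})$. The argument that this is not in $\BV_{\tau,\infty}$ is correct (no rotation makes a two-sided slit a hypograph). However, the claim that it is well-covered is asserted rather than verified and is not routine: near the slit endpoint $(0,0)$ the inner $\epsilon$-neighbourhood contains a box-with-interior-slit, and it is not clear from your sketch that such a region (or some overlapping decomposition of it into sectors/half-discs with bounded multiplicity) admits a foliation meeting Def.~\ref{def:wellfoliated} with uniformly bounded $\beta$, $\mathcal I_\beta$, and a comparable $E$. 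This gap is avoidable, but it means your example carries a non-trivial verification burden that the paper's Koch-snowflake example does not, since that domain's well-coveredness is already a byproduct of the later application section.
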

  \begin{proof}
   Let $\Omega \in \BV_{\tau,\infty}$ be a domain in $\mathbb{R}^n$ and let $p \in \partial \Omega$ with corresponding neighbourhood $U_p$. Then $U_p \cap \Omega$ is bounded by a graph $\Gamma_f$ of a function $f:\overline{Q_{n-1}} \to \mathbb{R}$ from above and a constant $b < \inf f$ from below. Hence there is a foliation $\{\gamma \in \Gamma\}$ by straight lines: For any $x \in Q_{n-1}$, let $\gamma_x:t \mapsto (x_1,x_2,\cdots,x_{n-1},b)+t(0,\cdots,0,1)$ be so that $\len \gamma_x = f(x)-b>0$. Such foliations satisfy conditions \eqref{eq:betacond1}-\eqref{eq:betacond2} trivially, as the corresponding density $\beta$ is $1$ everywhere. Then for some $r \in (b,\inf f)$, we set $E:= \{x \in U_p: \exists \gamma \text{ s.\,t.\ }x = \gamma(r)\}$ showing that $U_p$ is well-foliated for all $p \in \partial \Omega$.\par   
   Since $\partial \Omega$ is compact (being closed and bounded), there is a finite set of open neighbourhoods $U_p$ corresponding to some $f \in \BV_{\tau,\infty}$ that cover $\partial \Omega$. By \cite[Thm.~3.5, Cor.~3.8]{NetrusovSafarov2005}, for any $\epsilon>0$ small enough, each of these can be covered by a family of well-foliated domains  of size $\asymp \epsilon$ with $r=\epsilon$ and of cadinality $\ll \mathcal{V}_{\epsilon/2}(f,Q_{n-1}) + \epsilon^{-n}\vol_n(\Omega_{-\epsilon}) + \epsilon^{-1}\int_1 ^{1/\epsilon} t^{-2}\tau(t)dt \ll \tau(1/\epsilon) = \epsilon^{-\delta}$.
   \par
   Since the Koch snowflake $K$ is not locally a graph, $K \notin \BV_{\tau,\infty}$ while $K$ is well-covered as shown in Sec.~\ref{subsec:application}.
  \end{proof}
  \begin{rem}\label{rem:question}
   Suppose there is a bi-Lipschitz function $f:\Omega \to \Omega'$ with Lipschitz constant $L$, such that $\Omega' \in \BV_{\tau,\infty}$ with $\tau(t) \sim t^{-\delta}$ for some $\delta>1$. Then $\Omega$ is well-covered. Such domains are studied in \cite{NetrusovSafarov2005} but in many practical cases, it appears to be easier to show a domain is well-covered instead of showing it is $\BV_{\tau,\infty}$ up to a bi-Lipschitz transformation.
   Moreover, Thm.~\ref{thm:wellcovered-bv} shows that Thm.~\ref{thm:satz1} covers a larger class than $\BV_{\tau,\infty}$ addressing the open problem raised in \cite[Sec.~5.4.2]{NetrusovSafarov2005}.
  \end{rem}
 \subsection{Bounds on eigenvalue counting functions}\label{subsec:bounds}
  \begin{thm}\label{thm:satz1}
  Let $\Omega \subset \mathbb{R}^n$ be a domain. Suppose $\Omega$ is well-covered and that the upper inner Minkowski dimension $\delta$ of $\Omega$ satisfies $\delta\in [n-1,n)$. Then there exists $M_\Omega$ given in Sec.~\ref{sec:constants} such that 
   \begin{align*}
    N_N(\Omega,t) \leq C_W ^{(n)} \vol_n(\Omega)t^{n/2} + M_\Omega t^{\delta/2},
   \end{align*}
   for all sufficiently large $t$.
  \end{thm}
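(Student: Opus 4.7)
The plan is to split $\Omega$ into the inner $\epsilon$-neighbourhood $\Omega_{-\epsilon}$ and its complement, estimate the Neumann counting function of each piece separately via a suitable cover, and then optimise $\epsilon$ as a function of $t$.

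First I combine the Whitney cover $\mathcal{W}_{\epsilon}$ of Sec.~\ref{subsec:whitney} (which covers $\Omega\setminus\Omega_{-\epsilon}$ by pairwise disjoint cubes) with the well-foliated cover $\{D_i^{\epsilon}\}_{i\in I_{\epsilon}}$ of $\Omega_{-\epsilon}$ supplied by the well-covered hypothesis (Def.~\ref{def:wellcovered}). Since the $D_i^{\epsilon}$ have multiplicity bounded by $\mu$, the combined family is a volume cover of $\Omega$ of multiplicity at most $\mu+1$, and the Dirichlet--Neumann bracketing of Prop.~\ref{prop:bracketing} yields
\begin{align*}
    N_N(\Omega,t)\leq \sum_{Q\in\mathcal{W}_{\epsilon}} N_N\bigl(Q,(\mu+1)t\bigr) + \sum_{i\in I_{\epsilon}} N_N\bigl(D_i^{\epsilon},(\mu+1)t\bigr).
\end{align*}
For the second sum I invoke Cor.~\ref{cor:ewestimate}: the first non-trivial Neumann eigenvalue satisfies $\lambda_2^N(D_i^{\epsilon})\geq C_1(\Omega_{-\epsilon})\epsilon^{-2}$ with $\inf_{\epsilon}C_1(\Omega_{-\epsilon})>0$. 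Setting $\epsilon:=c t^{-1/2}$ with $c>0$ chosen small enough that $(\mu+1)t<C_1(\Omega_{-\epsilon})\epsilon^{-2}$, each summand equals $1$ and Def.~\ref{def:wellcovered}\ref{item:dreiindef} gives
\begin{align*}
    \sum_{i\in I_{\epsilon}} N_N\bigl(D_i^{\epsilon},(\mu+1)t\bigr)\leq \#I_{\epsilon}\leq C(\Omega)\epsilon^{-\delta}=C(\Omega)c^{-\delta}t^{\delta/2},
\end{align*}
which contributes precisely the desired $t^{\delta/2}$ rate.

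For the first sum I plan to invoke a sharp Weyl upper bound on the Neumann spectrum separating the leading volume term from a boundary correction of order $(n-1)$-dimensional surface area times $t^{(n-1)/2}$. Summed over $\mathcal{W}_{\epsilon}$, the leading contributions combine to at most $C_W^{(n)}\vol_n(\Omega)(\mu+1)^{n/2}t^{n/2}$, while the boundary correction should be controlled by $\vol_{n-1}(\partial\overline{\bigcup_{Q\in\mathcal{W}_{\epsilon}} Q})\leq A_{\Omega}\epsilon^{(n-1)-\delta}$ from Prop.~\ref{prop:whitneyumfang}, which under $\epsilon\asymp t^{-1/2}$ is exactly of order $t^{\delta/2}$. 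The main obstacle here is precisely that naive cube-by-cube summation $\sum_Q\vol_{n-1}(\partial Q)$ over $\mathcal{W}_{\epsilon}$ is strictly larger than $\vol_{n-1}(\partial\overline{\bigcup Q})$ because shared faces get double counted. I expect to circumvent this either by decomposing $\mathcal{W}_{\epsilon}$ dyadically into the slices $\mathcal{W}_k$ and invoking Prop.~\ref{prop:whitneycardinality} to bound the cardinalities, or by applying a Netrusov--Safarov-type two-term estimate directly to the whole union $\overline{\bigcup_{Q\in\mathcal{W}_{\epsilon}} Q}$, whose boundary is piecewise axis-parallel and whose billiard flow is generically non-periodic by Prop.~\ref{prop:whitneybillard}, so that Ivrii-type results apply.

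Combining both contributions, absorbing the multiplicity factor $(\mu+1)^{n/2}$ into the prefactor, and choosing $c$ to balance the trade-off between $c^{(n-1)-\delta}$ (from the Whitney boundary term) and $c^{-\delta}$ (from the well-foliated cover cardinality) produces an explicit constant $M_{\Omega}$ of the form $M_{\Omega}=K_1\cdot A_{\Omega}c^{(n-1)-\delta}+C(\Omega)c^{-\delta}$, where $K_1$ collects dimensional constants and $c$ is the calibration parameter. The resulting inequality holds for all $t$ large enough that $\epsilon(t)\leq\epsilon_0$ and the spectral-gap condition $(\mu+1)t<C_1(\Omega_{-\epsilon})\epsilon^{-2}$ is simultaneously satisfied, yielding the theorem with the constants recorded in Sec.~\ref{sec:constants}.
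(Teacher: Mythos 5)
Your decomposition into $\Omega_{-\epsilon}$ and $\Omega\setminus\Omega_{-\epsilon}$, the scaling $\epsilon \asymp t^{-1/2}$, the use of Cor.~\ref{cor:ewestimate} to kill the boundary sum, and the appeal to a two-term estimate on $\overline{\bigcup Q}$ via Prop.~\ref{prop:whitneyumfang} and Prop.~\ref{prop:whitneybillard} all mirror the paper's strategy. However, there is a genuine gap at the very first bracketing step.

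You treat $\mathcal{W}_\epsilon \cup \{D_i^\epsilon\}_{i\in I_\epsilon}$ as a single cover of multiplicity $\mu+1$ and apply Prop.~\ref{prop:bracketing} uniformly, giving $N_N(\Omega,t)\leq \sum_{Q}N_N(Q,(\mu+1)t)+\sum_i N_N(D_i^\epsilon,(\mu+1)t)$. The factor $\mu+1$ then contaminates every Whitney cube, so the leading-order contribution from $\sum_Q N_N(Q,(\mu+1)t)$ is $C_W^{(n)}\vol_n(\Omega)(\mu+1)^{n/2}t^{n/2}(1+o(1))$. Since $(\mu+1)^{n/2}>1$, the excess $C_W^{(n)}\vol_n(\Omega)\bigl((\mu+1)^{n/2}-1\bigr)t^{n/2}$ is of order $t^{n/2}$ and cannot be ``absorbed into the prefactor'' $M_\Omega$, which multiplies the strictly slower term $t^{\delta/2}$ (recall $\delta<n$). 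Your conclusion therefore gives an upper bound with a strictly larger leading Weyl coefficient than claimed, so the theorem does not follow.

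The paper avoids this by refusing to apply the multiplicity penalty to the bulk of the Whitney cover. It first partitions the cover into \emph{two pairwise disjoint} subcovers: (i) a boundary collection $U_\epsilon^\mathcal{W}$ consisting of the chosen $D_i^\epsilon$ together with the (few) Whitney cubes meeting them, and (ii) the remaining Whitney cubes $\widetilde{\mathcal{W}}_\epsilon$, which by construction meet no $D_i^\epsilon$ and therefore form a multiplicity-one cover of the interior. Prop.~\ref{prop:bracketing} is applied once with the disjoint split to get $N_N(\Omega,t)\leq N_N(\bigcup_{U_\epsilon^\mathcal{W}}U,t)+N_N(\mathrm{int}\bigcup_{\widetilde{\mathcal{W}}_\epsilon}\overline Q,t)$, and only inside the first term is the multiplicity-$(\mu+1)$ version invoked. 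There the penalty is harmless because $\epsilon$ is calibrated exactly so that each piece contributes at most one eigenvalue, and only the cardinality (which is $\asymp\epsilon^{-\delta}\asymp t^{\delta/2}$) matters. The clean $t^{n/2}$ Weyl volume term comes entirely from the disjoint polygon $\mathrm{int}\bigcup_{\widetilde{\mathcal{W}}_\epsilon}\overline Q$, where no multiplicity correction appears. To repair your argument you must separate out, before bracketing, those Whitney cubes that actually overlap some $D_i^\epsilon$, and apply the multiplicity correction only inside that boundary shell.

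Your remaining steps — bounding $\#I_\epsilon\leq C(\Omega)\epsilon^{-\delta}$, using the eigenvalue lower bound of Cor.~\ref{cor:ewestimate} to trivialise the boundary sum, handling the face overcounting by applying a two-term Weyl law to the whole polygonal union rather than summing $\vol_{n-1}\partial Q$ cube by cube, and invoking Prop.~\ref{prop:whitneyumfang} and Prop.~\ref{prop:whitneybillard} — all align with the paper and are sound once the bracketing step above is corrected.
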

  \begin{proof} 
    Let $\{Q\}_{Q \in \mathcal{W}}$ denote a Whitney cover of $\Omega$ (see Sec.~\ref{subsec:whitney}). For $\epsilon>0$ we restrict $\{Q\}_{Q \in \mathcal{W}}$ to the smallest subset $\mathcal{W}_\epsilon \subset \mathcal{W}$ of cubes volume covering $\Omega\backslash\Omega_{-\epsilon}$, i.\,e.\ those cubes that are sufficiently far away from $\partial \Omega$ and hence are sufficiently large. Suppose that $\epsilon$ is small enough so that $\Omega_{-\epsilon}$ is well-covered by well-foliated domains $\{D^\epsilon _i\}_{i\in I_\epsilon}$ with $L_i ^\epsilon \sim \epsilon$. Let $J_{\epsilon}\subset I_{\epsilon}$ be such that $D_i^{\epsilon}\cap \Omega\backslash \bigcup_{\mathcal{W}_\epsilon} Q \neq\emptyset$ for $i\in J_{\epsilon}$ and set 
    $U_\epsilon := \{D^\epsilon _i\}_{i\in J_\epsilon}$. Then $\Omega = \bigcup_{i \in J_\epsilon} D^\epsilon _i \cup \bigcup_{\mathcal{W}_\epsilon} Q$ and this volume cover can be split into two disjoint volume subcovers. Let $T_\epsilon := \{Q \in \mathcal{W}_\epsilon:  Q \cap \bigcup_{i \in J_\epsilon} D_i ^\epsilon \neq \emptyset\}$ and $U_\epsilon ^\mathcal{W}:=U_\epsilon \cup T_\epsilon$ be the set of all well-foliated domains $D_i ^\epsilon$ together with all cubes in $\mathcal{W}_\epsilon$ that interset some $D_i ^\epsilon$. Next, we restrict the Whitney cover further to the minimal set of cubes necessary to volume cover $\Omega$ given the volume cover $\{U\}_{U \in U_\epsilon ^\mathcal{W}}$ by defining $\widetilde{\mathcal{W}}_\epsilon:=\{Q \in \mathcal{W}_\epsilon: Q \cap U = \emptyset\,\,\forall U \in U_\epsilon ^\mathcal{W}\}$. These volume covers are disjoint and have multiplicity $\mu( U_\epsilon ^\mathcal{W} )\leq \mu+1$ and $\mu(\widetilde{\mathcal{W}}_\epsilon)=1$ so that $\vol_n\Omega = \vol_n \left[ \left( \bigcup_{ U_\epsilon ^\mathcal{W} }U \right) \sqcup \left(\inn\bigcup_{\widetilde{\mathcal{W}}_\epsilon} \overline{Q}\right)\right]$. Therefore
 \begin{align*}
  N_N(\Omega,t) \leq \underbrace{N_N \bigg( \bigcup_{ U_\epsilon ^\mathcal{W} } U , t \bigg)}_{:=S^\epsilon _1 (t)} + \underbrace{N_N\bigg( \inn\bigcup_{\widetilde{\mathcal{W}}_\epsilon} \overline{Q},t \bigg)}_{:=S^\epsilon _2(t)}
 \end{align*}
 and we estimate both terms separately.
 \begin{enumerate}
  \item[$S_1 ^\epsilon(t)$.] By construction one can estimate the first non-trivial eigenvalue of any Whitney cube $Q \in U_\epsilon ^\mathcal{W}$ as,
 \begin{align*}
  \lambda_2 ^N(Q) = \left(\frac{\sqrt{n}\pi}{\diam Q}\right)^2 \geq \left(\frac{\sqrt{n}\pi}{\dist(Q,\partial \Omega)}\right)^2 \geq \left( \frac{\sqrt{n}\pi}{c_{\diam} ^+ \epsilon} \right)^2.
 \end{align*} 
For large enough $\lambda$ we can choose $\epsilon$ s.\,t. $(\mu+1)\lambda = C_2 \epsilon^{-2}$. We define the auxiliary constant $1 \leq s_{\sup} := \sup_{\epsilon} \frac{1}{\epsilon}\inf \left( \nu: \bigcup_{i \in I_\epsilon} D_i ^\epsilon \subset \Omega_{-\nu} \right) \leq c_{\diam} ^+$. Any $Q \in T_\epsilon$ satisfies $\dist(Q,\partial \Omega) \geq \epsilon - \diam(Q)$ (i.\,e.\ $\diam Q \geq \frac{\epsilon}{5}$) and $\diam Q \leq \dist(Q,\partial \Omega) \leq s_{\sup}\epsilon$. Then, by Prop.~\ref{prop:whitneycardinality},
 \begin{align*}
  S_1 ^\epsilon(\lambda) &\leq \sum_{U \in U_\epsilon ^\mathcal{W}} N_N\left( U,(\mu+1) \lambda \right) \leq \#I_\epsilon + \sum_{k_- \leq k \leq k_+} \#\mathcal{W}_k \leq \# I_\epsilon + 2^{k_-\delta}\sum_{k=0} ^{\lfloor 2+\log_2(5s_{\sup}) \rfloor } {\mathfrak{M}_{\Omega}}2^{k\delta} \\
  &\leq C(\Omega)\epsilon^{-\delta} + {\mathfrak{M}_{\Omega}}\left( \frac{\sqrt{n}}{s_{\sup}}\right)^\delta \frac{(8\cdot 5 s_{\sup})^\delta -1}{2^\delta-1}\epsilon^{-\delta} \leq C_3(\Omega) \epsilon^{-\delta} = \frac{C_3(\Omega)(\mu+1)^{\delta/2}}{C_2(\Omega)^{\delta/2}} \lambda^{\delta/2} ,
 \end{align*}
 where $k_-:=\left\lfloor \log_2 \frac{\sqrt{n}}{s_{\sup}\epsilon} \right\rfloor$ and $k_+ := \left\lceil \log_2 \frac{5\sqrt{n}}{\epsilon} \right\rceil$ and hence $k_+-k_- \leq 2+\log_2 (5s_{\sup})$.
 \item[$S_2 ^\epsilon(t)$.]
 Notice that $\inn\bigcup_{Q \in \widetilde{\mathcal{W}}_\epsilon} \overline{Q}$ is a polygon whose volume is bounded by $\vol_n \Omega$. By Prop.~\ref{prop:whitneyumfang}, Prop.~\ref{prop:whitneybillard} and a result on two-term asymptotics for polygons (see Sec.~4.5 and Thm.~7.4.11 in \cite{ivrii2019I} or \cite{vassiliev86}),
 \begin{align*}
     \limsup_{t \to \infty} \frac{S_2 ^\epsilon(t)}{C_W ^{(n)} \vol_n (\Omega) t^{n/2} + \frac{C_W ^{(n-1)}}{4}A_\Omega \epsilon^{n-1-\delta} t^{(n-1)/2}} \leq 1.
 \end{align*}
 \end{enumerate}
 The estimates for $S_1 ^\epsilon(t)$ and $S_2 ^\epsilon(t)$ together show that for large enough $t$
 \begin{equation*}
  N_N(\Omega,t) \leq C_W ^{(n)} \vol_n (\Omega)t^{n/2} + M_{\Omega}t^{\delta/2}.\qedhere
 \end{equation*}
  \end{proof}
  \begin{rem}\label{rem:countingfunctions}\ 
  \begin{enumerate}
     \item The minimal value of $t$ for which the estimate of Thm.~\ref{thm:satz1} holds true depends entirely on the maximal value of $\epsilon$ in Def.~\ref{def:wellcovered}.\ref{item:dreiindef}. More precisely, if $\Omega$ is well-covered for all $\epsilon < \epsilon_0$, an upper bound for $N_N(\Omega,t)$ for all $t > t_0 :=\frac{C_2 \epsilon_0 ^{-2}}{\mu+1}$ is presented below. This variant of the above estimate involves estimating $S_2 ^\epsilon(t) \leq \sum_{Q \in \widetilde{\mathcal{W}}_\epsilon} N_N(Q,t)$.
     More precisely, setting $k' _- := \lfloor -\log_2 \frac{\diam \Omega}{\sqrt{n}} \rfloor$, one has
     \begin{align*}
         S_2 ^\epsilon(t) &\leq \sum_{Q \in \widetilde{\mathcal{W}}_\epsilon} N_N(Q,t) \leq \sum_{k = k' _-} ^{k _+} \#\mathcal{W}_{k} N_N(2^{-k}I^n,t)
     \end{align*}
     While more accurate estimates are possible, this is sufficient to provide an upper bound in terms of counting functions of cubes.
     Based on a simple lattice counting argument and a result by P\'olya in \cite{polya1960}, the Neumann counting function of a unit cube $I^n = (0,1)^n \subset \mathbb{R}^n$ has an upper bound of $N_N(I^n,t) \leq N_D(I^n, (\sqrt{t}+ 2\pi\sqrt{n})^2) \leq C_W ^{(n)} (\sqrt{t}+ 2\pi\sqrt{n})^n$.\footnote{Proof: $N_N(I^n,t) = \#\{k=(k_1,\cdots,k_n) \in \mathbb{N}_0 ^n :|k|_2 ^2 \leq t/\pi^2\}$ and $N_D(I^n,t) = \#\{k=(k_1,\cdots,k_n) \in \mathbb{N} ^n:|k|_2 ^2 \leq t/\pi^2\}$. Defining $\widetilde{N_N}(r):=\#\{k \in \mathbb{N}_0 ^n: |k|_2 ^2 \leq r^2\}$ one observes $N_N(I^n,t) = \widetilde{N_N}(\sqrt{t}/\pi)$ and analogously for $N_D$. Notice that $\widetilde{N_N}(r) \leq \widetilde{N_D}(r + 2\sqrt{n})$ since one increases the radius sufficiently. Thus $N_N(I^n,t) = \widetilde{N_N}(\sqrt{t}/\pi) \leq \widetilde{N_D}(\sqrt{t}/\pi + 2\sqrt{n}) = N_D(I^n,(\pi(\sqrt{t}/\pi + 2\sqrt{n})^2)$. Finally $N_D(I^n,t) \leq C_W ^{(n)} t^{n/2}$ by P\'olya's result.} Applying this to the cubes in $\mathcal{W}_\epsilon$ yields an explicit upper bound of $S_2 ^\epsilon(t)$ that is not only asymptotic but holds for all $t > t_0$.
     \begin{align*}
         S_2 ^\epsilon (t) &\leq C_W ^{(n)} \sum_{k=k_- '} ^{k_+} \#\mathcal{W}_{k} \left( 2^{-k}\sqrt{t}+2\pi\sqrt{n} \right)^n \\
         &\leq C_W ^{(n)} \left( \vol(\Omega)t^{n/2} + \mathfrak{M}_\Omega \sum_{k=k_- '} ^{k_+} 2^{k\delta} \left[\left( 2^{-k}\sqrt{t}+2\pi\sqrt{n} \right)^n - \left(2^{-k}\sqrt{t} \right)^n \right] \right).
     \end{align*}
     With $S_2 ^{\epsilon} (t) \leq S_2 ^{\epsilon} (t_0)$ if $t \leq t_0$ this extends to a global estimate of $N_N(\Omega,t)$ and the coefficient of such an error term estimate will be denoted by $M_K ^{\text{abs}}$. 
     Since the estimate of $S_1 ^\epsilon$ is also correct for all $t$, this gives an absolute estimate of the remainder term of the Neumann counting function. In the regular case of $\delta = n-1$, this estimate then only provides a weaker estimate of $t^{(n-1)/2}\log t$ instead of the boundary term $t^{(n-1)/2}$. Similarly such an asymptotic term is also obtained if one uses the regular asymptotic law of the counting functions $N_N(2^{-k}I^n,t)$ as shown for example by Lapidus in \cite{la1991} 
     whenever the upper Minkowski content agrees the upper inner Minkowski content. 
     \label{item:allelambda}
      \item Let $\Omega \subset \mathbb{R}^n$ be any open set with finite volume such that $\vol_n(\Omega_{-\epsilon}) \leq \widetilde{C} \epsilon^{n-\delta}$ for some $\delta \in (n-1,n)$ and all sufficiently small $\epsilon>0$. Then, using Whitney covers and an estimate of the form of Prop.~\ref{prop:whitneycardinality}, one can find lower bounds for $N_D(\Omega,t)$ as was shown by van den Berg and Lianantonakis in \cite{vandenBerg2001}. Indeed, for any such domain $\Omega$ one has
   \begin{align*}
    N_D(\Omega,t) \geq 
    \begin{cases}
        C_W ^{(n)} \vol_n(\Omega) t^{n/2} - \frac{5\widetilde{C}}{(n-\delta)(\delta+1-n)}t^{\delta/2} &\colon\delta > n-1,\ t>0\\
        C_W ^{(n)} \vol_n(\Omega) t^{n/2} - 3\widetilde{C}t^{(n-1)/2}\left( \log \left((2\vol_n\Omega)^{2/n} t \right) \right) &\colon \delta=n-1,\ t>\frac{4}{\sqrt[n]{\vol_n(\Omega)^{2}}}.
    \end{cases}
   \end{align*}
  Together with Thm.~\ref{thm:satz1}, this gives explicit upper and lower bounds for the remainder term of counting functions (for Neumann, Dirichlet and mixed boundary conditions) of well-covered domains whenever $\delta > n-1$: Since $\vol_n (\Omega_{-\epsilon}) \leq \vol_n \left(\bigcup_{i \in I_\epsilon} D_i ^\epsilon \right) \leq c_{\vol} ^+ C(\Omega) \epsilon^{n-\delta}$, we may set $\widetilde{C} := c_{\vol} ^+ C(\Omega)$ to obtain an estimate for all $t > t_0$ for $|N_N(\Omega,t) - C_W ^{(n)} \vol_n \Omega t^{n/2}| \leq \max\{ M_K ^{\text{abs}},\frac{5\widetilde{C}}{((n-\delta)(\delta+1-n))} \}t^{\delta/2}$ with $M_K ^{\text{abs}}$ taken from \ref{item:allelambda} above. This constant will be denoted by $\widetilde{M_K}$. It plays a central role in the companion paper \cite{DOKUMENT2}. In \cite{DOKUMENT2} these estimates are used to deduce high order asymptotic terms for $N_D(U,t)$ for self-similar sprays $U$ whose generators are finite unions of pairwise disjoint Koch snowflakes.
  \label{item:vandenBerg}
  \item By Thm.~\ref{thm:satz1} there is an $M_\Omega \in \mathcal{O}(1)$ such that $N_N(\Omega,t)= C_W ^{(n)} \vol \Omega t^{n/2} + M_\Omega(t) t^{\delta/2}$. Based on the fact that for any rescaling $\alpha>0$ one has $N_N(\alpha \Omega,t) = N_N(\Omega,\alpha^2 t)$, it is clear that $M_{\alpha \Omega}(t) = \alpha^\delta M_\Omega(\alpha^2t)$.
  \item Combining Thm.~1.10 of \cite{NetrusovSafarov2005} with Thm.~\ref{thm:wellcovered-bv} it follows that for any $n$ there is a well-covered domain $\Omega \subset \mathbb{R}^n$ and a $t_\Omega\in \mathbb{R}_+$ for which $N_N(\Omega,t) - C_W ^{(n)} \vol_n \Omega t^{n/2} \geq t_\Omega ^{-1} t^{\delta/2}$ for all $t> t_\Omega$. In this sense, the result of Thm.~\ref{thm:satz1} is order-sharp.
  \end{enumerate}
  \end{rem}
  \subsection{Application to snowflakes}\label{subsec:application}
  For snowflake domains and more generally any domain whose boundary has a self-similar structure, the values of $\beta(\gamma,t)$ as introduced in Def.~\ref{def:wellfoliated} can often be easily understood: They increase in discrete steps according to a power-law as $t$ approaches $\len \gamma$ (cf.~Fig.~\ref{fig:foliationNEWp}). For the Koch snowflake for example, one might say, that $\beta$ doubles in value each time a fibre passes through $\Phi^k(I_0)$ for a $k\in\mathbb N$.
  \par
  The remainder of this section is devoted to finding an explicit estimate for the remainder term of the eigenvalue counting function of the Neumann Laplacian for a family of snowflakes constructed below.
  \par
  \paragraph{Construction of $p$-snowflakes of Rohde type.} We summarise the construction of a class of snowflakes introduced by Rohde in \cite{rohde2001}. Let $p \in \left(\frac{1}{4},\frac{\sqrt{3}-1}{2}\right)$. For this $p$, we consider the IFS of four contractions, each of contraction ratio $p$, whose action on the unit interval is depicted in Fig.~\ref{fig:Rohde-p}. The limit set of this IFS will be called $p$-Koch curve. Note that the classic Koch curve arises when $p=1/3$. 
   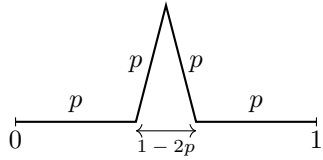
\begin{figure}[h!]
   \centering
     \begin{tikzpicture}[scale=4]
     \def\p{2/5};
     \node[below] at (0,0) {$0$};
     \draw (0,0.015) -- (0,-0.015);
     \node[below] at (1,0) {$1$};
     \draw (1,0.015) -- (1,-0.015);
     \draw[thick] (0,0) -- (\p,0) -- (1/2,{sqrt(3/20)}) -- (1-\p,0)-- (1,0);
     \node[above] at (\p/2,0) {$p$};
     \node[above] at (1-\p/2,0) {$p$};
     \node at ({\p},{sqrt(3/20)/2}) {$p$};
     \node at ({1-\p},{sqrt(3/20)/2}) {$p$};
     \draw[<->] (\p,-0.03) -- (1-\p,-0.03);
     \node[below] at (1/2,-0.03) {\scriptsize$1-2p$};
     \end{tikzpicture}
    \caption{Illustration of the action of the IFS dependent on $p$ on the unit interval, which is used in \cite{rohde2001,goldstein2018}. The resulting curve depicted here shall be denoted by $A_p$.}
     \label{fig:Rohde-p}
   \end{figure}
  Replacing each of the four sides of a unit square with an outwards pointing copy of the $p$-Koch curve 
  produces a snowflake-like domain $\Rohde$. Each thus constructed $\Rohde$ belongs to the class of homogeneous snowflake domains considered by Rohde in \cite{rohde2001}. We will discuss the more general construction of Rohde and its significance in Rem.~\ref{rem:Rohde}. We will also consider snowflakes $\Kochp$, where one replaces each side of an equilateral triangle with an outwards pointing copy of the $p$-Koch curve and note that $\Koch:=\Koch(1/3)$ is the classic Koch snowflake.\par
 \paragraph{Bounds on the Neumann eigenvalue counting function for $p$-snowflakes of Rohde type.}
 Fix $p\in\left(\frac14,\frac{\sqrt{3}-1}{2} \right)$ and let $\delta := \overline{\dim_M} \partial \Rohde= \overline{\dim_M} \partial \Kochp = -\log_p 4$ denote the upper inner Minkowski dimension of the boundary of $\Rohde$ and $\Kochp$. 
  In order to apply Thm.~\ref{thm:satz1}, we need to show that $\Kochp$ and $\Rohde$ are well-covered. To this end, let $\epsilon \in J_k ^{(p)} := \left( p^{k+1}\frac{1-2p}{\sqrt{4p-1}} \right.,\left. p^k \frac{1-2p}{\sqrt{4p-1}}\right]$. Based on a modification of the geometric approximation of $\Koch(1/3)_{-\epsilon}$ in \cite{LapidusPearse}, we construct a cover of $\Kochp_{-\epsilon}$ and $\Rohde_{-\epsilon}$ by well-foliated domains as in the proof of Thm.~\ref{thm:satz1}. This cover has multiplicity $\mu = 2$. Such covers for the classic Koch snowflake and $\Rohde$ are depicted in Fig.~\ref{fig:KochSnowflakeCover} and Fig.~\ref{fig:KochSnowflakeCoverRohde}. 
  The cover consists of three types of objects:
  \begin{enumerate}[label=(\Alph*)]
   \item Fringed rectangles (fR) $(1-2p) p^{k-1} \times \epsilon$ with fractal top of height $p^{k-1}\frac{\sqrt{4p-1}}{2}$. \label{item:1}
   \item Short rectangles (sR) $p^k \times \epsilon$ with a fractal top of height $p^k \frac{\sqrt{4p-1}}{2}$.
   \item Long rectangles (lR) $\left(p^k+\frac{\epsilon\sqrt{4p-1}}{2p}\right)\times \epsilon$ and a fractal top of height $p^k\frac{\sqrt{4p-1}}{2}$.\label{item:3}
  \end{enumerate}

  \begin{figure}[h!]
      \centering
\includegraphics{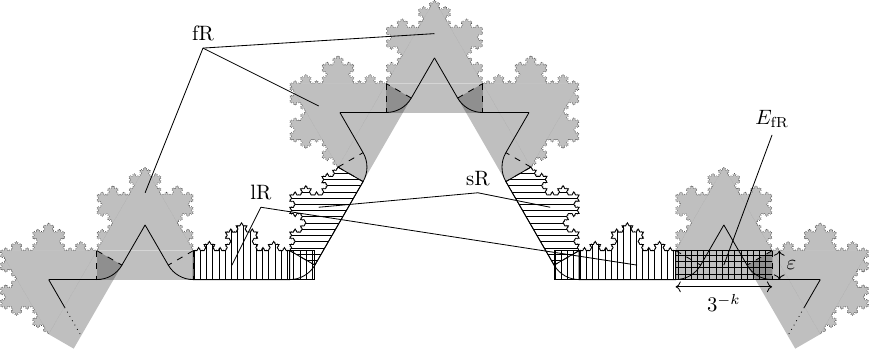}
      \caption{Instance of the cover of an inner $\epsilon$-neighbourhood $K_{-\epsilon}$ of the classic Koch snowflake $K$ and $\epsilon \in J_2 ^{(1/3)}$. There are three types of covering domains used here: fringed Rectangles (fR, coloured gray), short rectangles (sR, horizontal lines) and long rectangles (lR, vertical lines). An example of the underlying domain $E_{\text{fR}}$ is marked with a grid -- the other underlying domains are analogous. Both end segments of this curve are parts of additional fR's, as the snowflake is build out of three equilateral Koch curves. 
      }\label{fig:KochSnowflakeCover}
  \end{figure}
    \begin{figure}[h!]
      \centering
\includegraphics{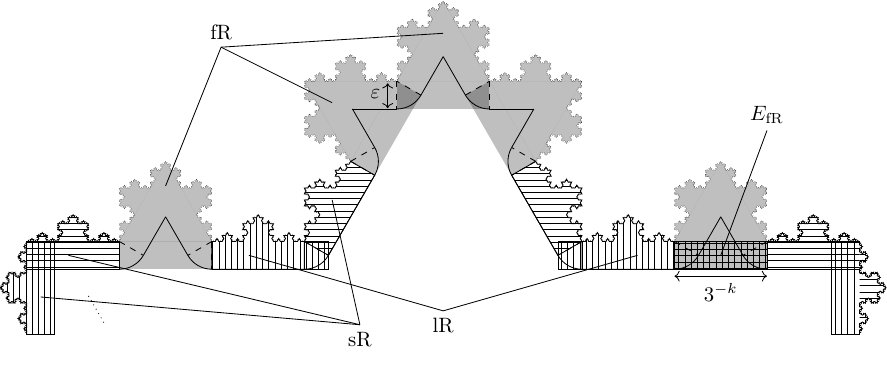}
      \caption{Cover of an instance of $\Rohde$ by domains as in \ref{item:1}-\ref{item:3}. The only difference to the Koch snowflake (see Fig.~\ref{fig:KochSnowflakeCover}) is located at the four edges of the unit square where one now uses two copies of sR instead of one fR. To better illustrate the overlap, the leftmost and rightmost copies of sR are marked by vertical lines instead of horizontal lines.
      }
      \label{fig:KochSnowflakeCoverRohde}
  \end{figure}

    \begin{figure}[h!]
 \centering
 \begin{tikzpicture}[
 declare function = {h(\g) = -sqrt(3)*\g + sqrt(3)/3; h2(\var) = -sqrt(3)/(9*6)*\var + 36*sqrt(3)/(9*6); hx(\var) = -sqrt(3)*\var + sqrt(3)/6; },
			scale=7,decoration=Koch snowflake]
\clip (0.6,-0.3) rectangle (2.6,0.9);
    \draw[line width=0.01mm] decorate{decorate{decorate{ decorate{ decorate{ decorate{ (0,0) -- (3,0) }}}}}};
    \draw[fill=black!20!white!40!,line width=0.01mm,dashed] decorate{ (0,0) -- (3,0) };
    \draw[white] (0,0) -- (3,0);
    \draw[fill=black!80!white!40!] decorate{decorate{decorate{ decorate{(1,0) -- (3/2,0.866025) }}}};
    \draw[fill=black!80!white!40!] decorate{decorate{decorate{ decorate{(3-3/2,0.866025) -- (3-1,0) }}}};
    \draw[fill=black!40!white!40!,line width=0.01mm,dashed] decorate{ decorate{(1,0) -- (3/2,0.866025) }};
    \draw[fill=black!40!white!40!,line width=0.01mm,dashed] decorate{ decorate{(3-3/2,0.866025) -- (3-1,0) }};
    \draw[fill=black!40!white!40!,line width=0.01mm] decorate{decorate{ (1,0) -- (1+1/18,0.866025/9) }};
    \draw[fill=black!40!white!40!,line width=0.01mm] decorate{decorate{ (3-1-1/18,0.866025/9) -- (3-1,0) }};
    \draw[fill=black!40!white!40!,line width=0.01mm] decorate{decorate{ (1+0.111111,0.19245) -- (1+1/18+0.111111,0.866025/9+0.19245) }};
    \draw[fill=black!40!white!40!,line width=0.01mm] decorate{decorate{ (3-1-1/18-0.111111,0.866025/9+0.19245) -- (3-1-0.111111,0.19245) }};
    \draw[fill=black!40!white!40!,line width=0.01mm] decorate{decorate{ (1+0.333333, 0.57735) -- (1+1/18+0.333333,0.866025/9+0.57735) }};
    \draw[fill=black!40!white!40!,line width=0.01mm] decorate{decorate{ (3-1-1/18-0.333333,0.866025/9+0.57735) -- (3-1-0.333333, 0.57735) }};
    \draw[fill=black!40!white!40!,line width=0.01mm] decorate{decorate{ (1+0.444444, 0.7698) -- (1+1/18+0.444444,0.866025/9+0.7698) }};
    \draw[fill=black!40!white!40!,line width=0.01mm] decorate{decorate{ (3-1-1/18-0.444444,0.866025/9+0.7698) -- (3-1-0.444444, 0.7698)}};
    \draw[fill=black!80!white!40!,line width=0.01mm] decorate{decorate{ decorate{ (1.16667, 0.288675) -- (1, 0.57735) }}};
    \draw[fill=black!80!white!40!,line width=0.01mm] decorate{decorate{ decorate{ (3-1, 0.57735) -- (3-1.16667, 0.288675)}}};
    \draw[fill=black!80!white!40!,line width=0.01mm] decorate{decorate{ decorate{ (1, 0.57735) -- (1.33333, 0.57735) }}};
    \draw[fill=black!80!white!40!,line width=0.01mm] decorate{decorate{ decorate{ (3-1.33333, 0.57735) -- (3-1, 0.57735) }}};%
    \draw[fill=black!120!white!40!,line width=0.01mm] decorate{decorate{ decorate{ (1.11111, 0.3849) -- (1, 0.3849) }}};
    \draw[fill=black!120!white!40!,line width=0.01mm] decorate{decorate{ decorate{ (3-1, 0.3849) -- (3-1.11111, 0.3849)}}};
    \draw[fill=black!120!white!40!,line width=0.01mm] decorate{decorate{ decorate{ (1, 0.3849) -- (1.05556, 0.481125) }}};
    \draw[fill=black!120!white!40!,line width=0.01mm] decorate{decorate{ decorate{ (3-1.05556, 0.481125) -- (3-1, 0.3849) }}};
    \draw[fill=black!120!white!40!,line width=0.01mm] decorate{decorate{ decorate{ (1+0.0555556+0.0555556, 0.3849+0.096225+0.096225) -- (1.05556+0.0555556+0.0555556, 0.481125+0.096225+0.096225) }}};
    \draw[fill=black!120!white!40!,line width=0.01mm] decorate{decorate{ decorate{ (3-1.05556-0.0555556-0.0555556, 0.481125+0.096225+0.096225) -- (3-1-0.0555556-0.0555556, 0.3849+0.096225+0.096225) }}};
    \draw[fill=black!120!white!40!,line width=0.01mm] decorate{decorate{ decorate{ (1.05556+0.0555556+0.0555556, 0.481125+0.096225+0.096225) -- (1.05556+0.0555556+0.0555556+0.0555556, 0.481125+0.096225+0.096225-0.096225)}}};
    \draw[fill=black!120!white!40!,line width=0.01mm] decorate{decorate{ decorate{ (3-1.05556-0.0555556-0.0555556-0.0555556, 0.481125+0.096225+0.096225-0.096225) -- (3-1.05556-0.0555556-0.0555556, 0.481125+0.096225+0.096225)}}};
	\draw[pattern=dots]
    (1,0) -- (2,0) -- (2,-0.2) -- (1,-0.2) -- (1,0);
	\node at (1.35,0.35) {\tiny$\beta=1$};
	\node at (1.15,0.525) {\tiny$\beta=\frac{2p}{1-2p}$};
	\node[left] at (0.85,0.425) {\tiny$\beta=\left(\frac{2p}{1-2p}\right)^2$};\draw[line width=0.05mm] (0.85,0.425) -- (1.05,0.415);
	\node[left] at (0.85,0.325) {\tiny$\beta=\left(\frac{2p}{1-2p}\right)^3$};\draw[line width=0.05mm] (0.85,0.325) -- (1.05,0.375);
	\node[scale=1] at (1.2,-0.1) {$E$};
	\draw[line width=0.4mm] (1,-0.2) -- (2,-0.2);
	\node[scale=1,above] at (1.1,-0.2) {$Q$};
	\node[scale=1] at (1.4,0.065) {$D$};
    \draw[<->] (2.025,0) -- (2.025,-0.2);
    \node[rotate=90,below] at (2.025,-0.1) {$r=\epsilon$};
    \draw[<->] (1,-0.225) -- (2,-0.225);
    \node[below] at (1.5,-0.225) {$(1-2p)p^{k-1}$};
	\foreach \k in {0,1,2,4,5,6,7,8,9,10,11,12,13,14,15,16,17,18,19,20,21,22,23,24,25,26,27,28,29,30,31,33,34,35,36,37,38,39,40,41,42,43,44,45,46,47,48,49,50,51,52,53,54}{.
        \draw (1.5+\k/108,-0.2) -- (1.5+\k/108,{-sqrt(3)*(1.5+\k/108)+2*sqrt(3)});
        }
        \foreach \k in {32}{
        \draw (1.5+\k/108,-0.2) -- (1.5+\k/108,{-sqrt(3)*(1.5+\k/108)+2*sqrt(3)});
        }
        \foreach \k in {1}{
        \draw (1.5+\k/108,{-sqrt(3)*(1.5+\k/108)+2*sqrt(3)}) -- ({1.5+\k/108+sqrt(3)/2*(1/27)*(h2(27+1-\k))},{-sqrt(3)*(1.5+\k/108)+2*sqrt(3)+1/2*(1/27)*(h2(27-1+\k))});
        }
        \foreach \k in {3}{
        \draw (1.5+\k/108,{-sqrt(3)*(1.5+\k/108)+2*sqrt(3)}) -- ({1.5+\k/108+sqrt(3)/2*(1/3)*(1/3)*(h2(27+3-\k))},{-sqrt(3)*(1.5+\k/108)+2*sqrt(3)+1/2*(1/3)*(1/3)*(h2(27-3+\k))});
        }
        \foreach \k in {5}{
        \draw (1.5+\k/108,{-sqrt(3)*(1.5+\k/108)+2*sqrt(3)}) -- ({1.5+\k/108+sqrt(3)/2*(1/27)*(h2(27+5-\k))},{-sqrt(3)*(1.5+\k/108)+2*sqrt(3)+1/2*(1/27)*(h2(27-5+\k))});
        }
        \foreach \k in {6,7,8,9}{
        \draw (1.5+\k/108,{-sqrt(3)*(1.5+\k/108)+2*sqrt(3)}) -- ({1.5+\k/108+sqrt(3)/2*(1/3)*(h2(54-3*\k))},{-sqrt(3)*(1.5+\k/108)+2*sqrt(3)+1/2*(1/3)*(h2(54-3*\k))});
        }
        \foreach \k in {9,10,11,12}{
        \draw (1.5+\k/108,{-sqrt(3)*(1.5+\k/108)+2*sqrt(3)}) -- ({1.5+\k/108+sqrt(3)/2*(1/3)*(h2(3*\k))},{-sqrt(3)*(1.5+\k/108)+2*sqrt(3)+1/2*(1/3)*(h2(3*\k))});
        }
        \foreach \k in {13}{
        \draw (1.5+\k/108,{-sqrt(3)*(1.5+\k/108)+2*sqrt(3)}) -- ({1.5+\k/108+sqrt(3)/2*(1/27)*(h2(27))},{-sqrt(3)*(1.5+\k/108)+2*sqrt(3)+1/2*(1/27)*(h2(27))});
        }
        \foreach \k in {15}{
        \draw (1.5+\k/108,{-sqrt(3)*(1.5+\k/108)+2*sqrt(3)}) -- ({1.5+\k/108+sqrt(3)/2*(1/3)*(1/3)*(h2(27+15-\k))},{-sqrt(3)*(1.5+\k/108)+2*sqrt(3)+1/2*(1/3)*(1/3)*(h2(27-15+\k))});
        }
        \foreach \k in {17}{
        \draw (1.5+\k/108,{-sqrt(3)*(1.5+\k/108)+2*sqrt(3)}) -- ({1.5+\k/108+sqrt(3)/2*(1/27)*(h2(27))},{-sqrt(3)*(1.5+\k/108)+2*sqrt(3)+1/2*(1/27)*(h2(27))});
        }
        \foreach \k in {18,19,20,21,22,23,24,25,26,27}{
        \draw (1.5+\k/108,{-sqrt(3)*(1.5+\k/108)+2*sqrt(3)}) -- ({1.5+\k/108+sqrt(3)/2*(h2(54-\k))},{-sqrt(3)*(1.5+\k/108)+2*sqrt(3)+1/2*(h2(54-\k))});
        }
        \foreach \k in {22,23}{
        \draw ({1.5+\k/108+sqrt(3)/2*(h2(54-\k))},{-sqrt(3)*(1.5+\k/108)+2*sqrt(3)+1/2*(h2(54-\k))}) -- ({1.5+\k/108+sqrt(3)/2*(h2(54-\k))+0},{-sqrt(3)*(1.5+\k/108)+2*sqrt(3)+1/2*(h2(54-\k))+1/9*sqrt(3)/3});
        }
	\foreach \k in {27,28,29,30,31,33,34,35,36}{
        \draw (1.5+\k/108,{-sqrt(3)*(1.5+\k/108)+2*sqrt(3)}) -- ({1.5+\k/108+sqrt(3)/2*(h2(\k))},{-sqrt(3)*(1.5+\k/108)+2*sqrt(3)+1/2*(h2(\k))});
        }
        \foreach \k in {32}{
        \draw (1.5+\k/108,{-sqrt(3)*(1.5+\k/108)+2*sqrt(3)}) -- ({1.5+\k/108+sqrt(3)/2*(h2(\k))},{-sqrt(3)*(1.5+\k/108)+2*sqrt(3)+1/2*(h2(\k))});
        }
        \foreach \k in {31}{
        \draw ({1.5+\k/108+sqrt(3)/2*(h2(\k))},{-sqrt(3)*(1.5+\k/108)+2*sqrt(3)+1/2*(h2(\k))}) -- ({1.5+\k/108+sqrt(3)/2*(h2(\k))+sqrt(3)/2*1/9*sqrt(3)/3},{-sqrt(3)*(1.5+\k/108)+2*sqrt(3)+1/2*(h2(\k))-1/2*1/9*sqrt(3)/3});
        }
        \foreach \k in {32}{
        \draw ({1.5+\k/108+sqrt(3)/2*(h2(\k))},{-sqrt(3)*(1.5+\k/108)+2*sqrt(3)+1/2*(h2(\k))}) -- ({1.5+\k/108+sqrt(3)/2*(h2(\k))+sqrt(3)/2*1/9*sqrt(3)/3},{-sqrt(3)*(1.5+\k/108)+2*sqrt(3)+1/2*(h2(\k))-1/2*1/9*sqrt(3)/3});
        }
        \foreach \k in {37}{
        \draw (1.5+\k/108,{-sqrt(3)*(1.5+\k/108)+2*sqrt(3)}) -- ({1.5+\k/108+sqrt(3)/2*(1/27)*(h2(27))},{-sqrt(3)*(1.5+\k/108)+2*sqrt(3)+1/2*(1/27)*(h2(27))});
        }
        \foreach \k in {39}{
        \draw (1.5+\k/108,{-sqrt(3)*(1.5+\k/108)+2*sqrt(3)}) -- ({1.5+\k/108+sqrt(3)/2*(1/3)*(1/3)*(h2(27+39-\k))},{-sqrt(3)*(1.5+\k/108)+2*sqrt(3)+1/2*(1/3)*(1/3)*(h2(27-39+\k))});
        }
        \foreach \k in {41}{
        \draw (1.5+\k/108,{-sqrt(3)*(1.5+\k/108)+2*sqrt(3)}) -- ({1.5+\k/108+sqrt(3)/2*(1/27)*(h2(27))},{-sqrt(3)*(1.5+\k/108)+2*sqrt(3)+1/2*(1/27)*(h2(27))});
        }
        \foreach \k in {42,43,44,45}{
        \draw (1.5+\k/108,{-sqrt(3)*(1.5+\k/108)+2*sqrt(3)}) -- ({1.5+\k/108+sqrt(3)/2*(1/3)*(h2(162-3*\k))},{-sqrt(3)*(1.5+\k/108)+2*sqrt(3)+1/2*(1/3)*(h2(162-3*\k))});
        }
        \foreach \k in {45,46,47,48}{
        \draw (1.5+\k/108,{-sqrt(3)*(1.5+\k/108)+2*sqrt(3)}) -- ({1.5+\k/108+sqrt(3)/2*(1/3)*(h2(3*\k-108))},{-sqrt(3)*(1.5+\k/108)+2*sqrt(3)+1/2*(1/3)*(h2(3*\k-108))});
        }
        \foreach \k in {49}{
        \draw (1.5+\k/108,{-sqrt(3)*(1.5+\k/108)+2*sqrt(3)}) -- ({1.5+\k/108+sqrt(3)/2*(1/27)*(h2(27))},{-sqrt(3)*(1.5+\k/108)+2*sqrt(3)+1/2*(1/27)*(h2(27))});
        }
        \foreach \k in {51}{
        \draw (1.5+\k/108,{-sqrt(3)*(1.5+\k/108)+2*sqrt(3)}) -- ({1.5+\k/108+sqrt(3)/2*(1/3)*(1/3)*(h2(27+51-\k))},{-sqrt(3)*(1.5+\k/108)+2*sqrt(3)+1/2*(1/3)*(1/3)*(h2(27-51+\k))});
        }
        \foreach \k in {53}{
        \draw (1.5+\k/108,{-sqrt(3)*(1.5+\k/108)+2*sqrt(3)}) -- ({1.5+\k/108+sqrt(3)/2*(1/27)*(h2(27))},{-sqrt(3)*(1.5+\k/108)+2*sqrt(3)+1/2*(1/27)*(h2(27))});
        }
	\foreach \k in {3}{
        \draw (1.5+\k/108,-0.2) -- (1.5+\k/108,{-sqrt(3)*(1.5+\k/108)+2*sqrt(3)});
        }
\end{tikzpicture}
\caption{Systematic construction of the foliation of sets near the boundary of the $p$-snowflake of Rohde type in the case of a \textquotedblleft{}fringed rectangle (fR)\textquotedblright{} (see Sec.~\ref{subsec:application}\ref{item:1}). The well-foliated domain is the dotted rectangle (the set $E$) and the section of the snowflake above it. The gray areas within the snowflake indicate areas of constant $\beta(\gamma,t)$. One notices that in the particular case of $p=1/3$, a connection between the ternary expansion of $\gamma(0) \in \mathbb{R}$ and the shape of the curve (and in particular its number of turns) is evident.}\label{fig:foliationNEWp}
\end{figure}
We will now show that the fringed rectangles are well-foliated. Analoguously it can be shown that the short rectangles and the long rectangles are well-foliated, too. The foliation $\Gamma$, as constructed in Sec.~\ref{subsec:foliation}, is shown in Fig.~\ref{fig:foliationNEWp}. For every $\gamma =\gamma_q \in \Gamma$ with $q\in Q$ the integral $\int_0 ^{\len \gamma} \beta(\gamma,t)dt$ is bounded by the sum of integrals over $\beta(\gamma,t)$ along the longest fibre within each iteration of triangles. More precisely, we have a partition $0=t_{-1} < t_0 < t_1 \cdots$ of $[0,\len \gamma]$ such that $\gamma_q(t_{\ell})\in\Phi^{\ell}(I_0)$, yielding
\begin{align}
\begin{aligned}\label{eq:beta}
 \int_0 ^{\len \gamma} \beta(\gamma,t) dt
 = \sum_{\ell=0}^{\infty} \int_{t_{\ell-1}} ^{t_{\ell}} \beta(\gamma,t) dt
 &< r + \sum_{\ell=1}^{\infty} \frac{\sqrt{4p-1}}{2}\cdot p^{k+\ell-2} \left( \frac{2p}{1-2p} \right)^{\ell-1}
 \\
 &= r + p^{k-1} \frac{(1-2p)\sqrt{4p-1}}{2(1-2p-2p^2)} < \infty.
 \end{aligned}
\end{align}
With $r=\epsilon$, this implies that $\mathcal{I}_\beta < \epsilon \left( 1+\frac{4p-1}{2p^2(1-2p-2p^2)} \right)$. Moreover, in all of the cases of covering domains (fR, sR and lR), the foliation is constructed as shown in Fig.~\ref{fig:foliationNEWp} so that $\inf \beta = 1$, showing that fR, sR and lR are well-foliated. It also provides an intuition of how to construct a suitable family of foliations in similar cases.\par
To show that a snowflake as constructed above is a well-covered domain, we now need to check parts \ref{item:zweidef:1}-\ref{item:zweidef:4} of Def.~\ref{def:wellcovered}.\par
  To \ref{item:zweidef:1}: By the iteration dynamics, $\Rohde_{-\epsilon}$ is covered by $\frac{4}{3}(4^k - 1)$ instances of fR and $\frac{4}{3}(4^k+2)$ copies of sR or lR. In total this amounts to a cover of cardinality $\#I_\epsilon = \frac{4}{3}(2\cdot 4^k +1) \leq C(\Rohde) \epsilon^{-\delta}$ with $C(\Rohde) = 3\left( \frac{1-2p}{\sqrt{4p-1}} \right)^{\delta}$. Analogously one can chose $C(\Kochp) = \frac{3}{4}C(\Rohde)$. For the classic Koch snowflake $\Koch:=\Koch(1/3)$, the cardinality of the cover is $2\cdot 4^k-2 \leq C(\Koch) \epsilon^{-\delta}$ with $C(\Koch)=1$.\par
  To \ref{item:zweidef}: By construction $r=\epsilon$, so that $c_r^{\pm}=1$. The length $L$ of a fibre satisfies $\epsilon=r\leq L< r+\sum_{\ell=0}^{\infty} p^{\ell+k-1}\frac{\sqrt{4p-1}}{2} \leq \left( 1+\frac{4p-1}{p^2(2-6p+4p^2)} \right)\epsilon$ implying $c_L^-\geq1$ and $c_L^+\leq 1+\frac{4p-1}{p^2(2-6p+4p^2)}$. From our previous estimate on $\mathcal I_{\beta}$ we infer $c_{\mathcal I}^-\geq 1$ and $c_{\mathcal I}^+\leq 1+ \frac{4p-1}{2p^2(1-2p-2p^2)}$.
   The diameters of the covering domains \ref{item:1}-\ref{item:3} above are found to be bounded from above by $\sqrt{[(1-2p)p^{k-1}]^2 + \left(\epsilon + p^{k-1}\frac{\sqrt{4p-1}}{2}\right)^2}$ so that $c_{\diam} ^+ \leq \sqrt{\frac{4p-1}{p^4} + \left( 1-\frac{4p-1}{2p^2(1-2p)} \right)^2}$. Moreover, $c_{\diam} ^-\geq 1$.\par
   To \ref{item:zweidef:3} and \ref{item:zweidef:4}: By minimising $\lambda_2 ^N(E_{\text{lR}})\epsilon^2$ as a function of $\epsilon \in J_k ^{(p)}$ we obtain    $c_E(\Rohde) =\min\left(1, \frac{4(1-2p)^2p^2}{(3-2p)^2(4p-1)} \right)\pi^2$. 
   Using the above constants we compute $C_1(\Rohde_{-\epsilon})$ as given in Cor.\ref{cor:ewestimate} so that 
   $\lambda_2 ^N(D_i ^\epsilon) \geq \epsilon^{-2} C_1(\Rohde_{-\epsilon})$.
  See Fig.~\subref{fig:plotC1} for a plot of $C_1(\Rohde_{-\epsilon})$.
  \begin{figure}[h!]
    \centering
    \begin{subfigure}[t]{0.45\textwidth}
        \centering
        \includegraphics[width=\textwidth]{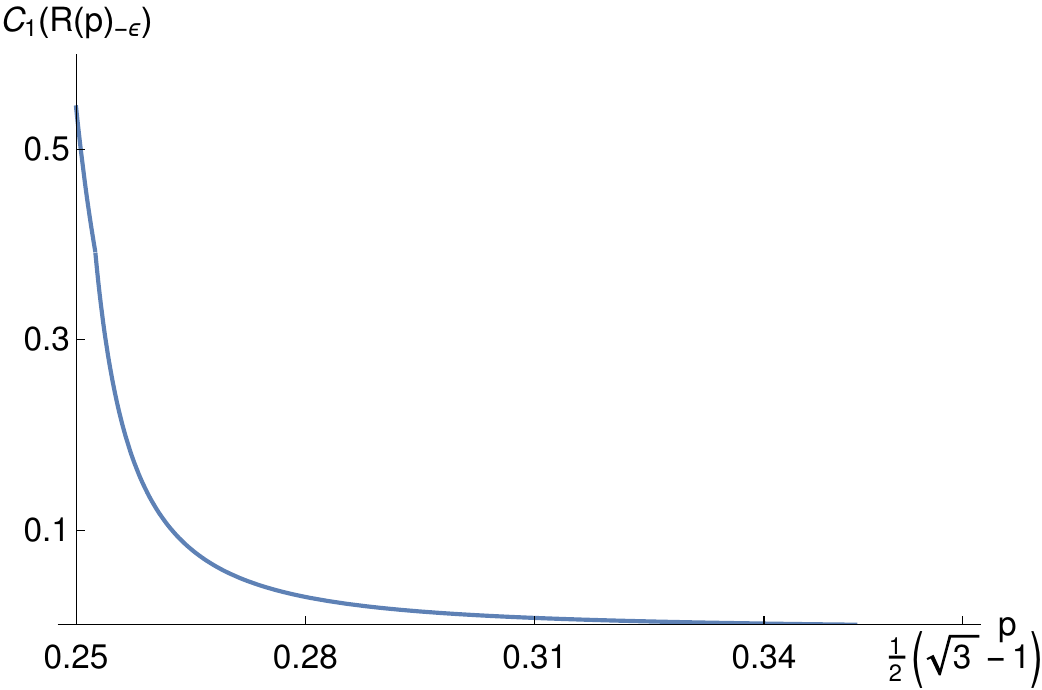}
        \subcaption{Values of $C_1(\Rohde_{-\epsilon})$ for values for $p \in (\frac{1}{4},\frac{\sqrt{3}-1}{2})$ as in Cor.\ref{cor:ewestimate}. The values approach $0$ as $p \nearrow \frac{\sqrt{3}-1}{2}$.}\label{fig:plotC1}
    \end{subfigure}%
    \qquad
    \begin{subfigure}[t]{0.45\textwidth}
        \centering
        \includegraphics[width=\textwidth]{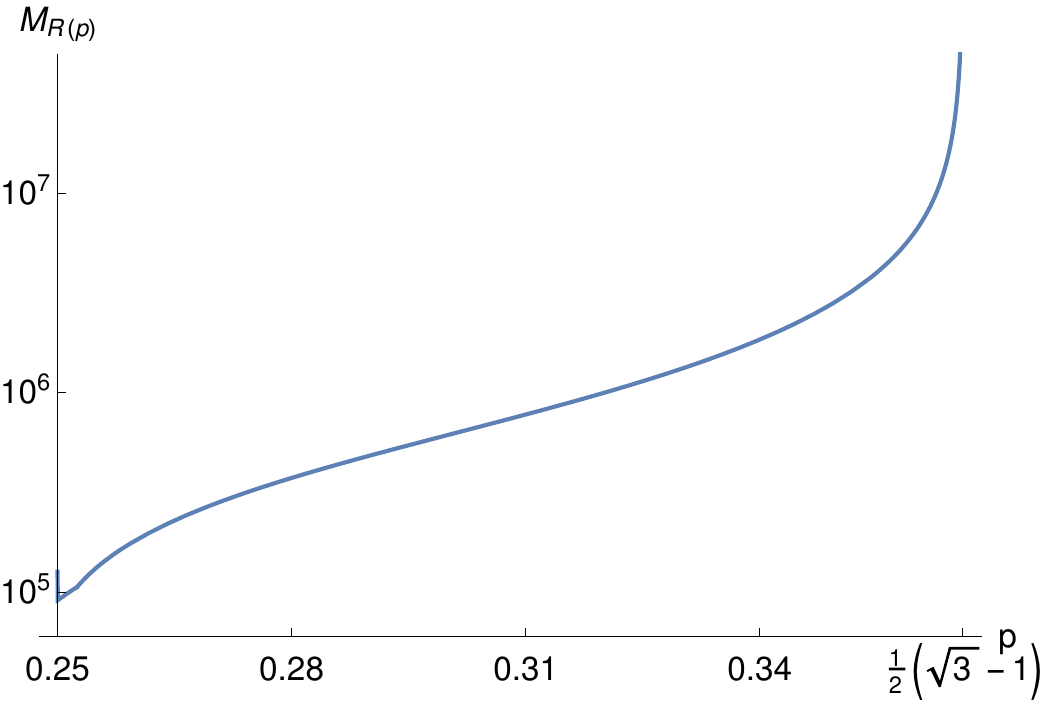}
        \caption{Values of $M_{K(p)}$ for values for $p \in (\frac{1}{4},\frac{\sqrt{3}-1}{2})$ as given by Thm.~\ref{thm:satz1}.}
      \label{fig:plotC4}
    \end{subfigure}
\end{figure}
  In particular in the case of the classic Koch snowflake one finds $\lambda_2 ^N(D) \geq 0,0031\epsilon^{-2}$ for any element of a cover of $K_{-\epsilon}$ as given by \ref{item:1}-\ref{item:3} above.\par
  Based on the above considerations, one now obtains expressions for $M_{\Rohde}$ as well as coefficients following Rem.~\ref{rem:countingfunctions}.\ref{item:allelambda}. As expected, the coefficients $M_{\Rohde}$ diverge as $p \nearrow \frac{\sqrt{3}-1}{2}$ and remain regular as $p \searrow 1/4$, see Fig.~\subref{fig:plotC4}.\par
  In the case of the classic Koch snowflake $K$, a slighty better estimate is possible based on an estimate of the inner tube area by Lapidus and Pearse in \cite{LapidusPearse}:
  \begin{align*}
   \vol_2(K_{-\epsilon}) \leq 3\left[\epsilon^{2-\delta} 4^{-\{x\}}\left( \frac{3\sqrt{3}}{40}9^{\{x\}} + \frac{\sqrt{3}}{2}3^{\{x\}} +\frac{1}{6}\left( \frac{\pi}{3}-\sqrt{3} \right) \right) - \frac{\epsilon^2}{3}\left( \frac{\pi}{3} + 2\sqrt{3} \right)\right],
  \end{align*}
  where $\delta = \log_3 4 = \overline{\dim_M}(\partial K,K)$ and $\{x\}$ is the fractional part of $x:=-\log_3 (\epsilon \sqrt{3})$.
  Therefore,
  \begin{align*}
   \overline{\mathcal{M}}_\delta(\partial K,K) \leq \frac{1}{480} \left( 723\sqrt{3} + 20\pi \right).
  \end{align*}
  Moreover, the monotonicity of the above upper bound shows that $\epsilon' \leq 0$ in our estimate and hence $\mathfrak{M}_K \leq 11.61$.
  Based on this, $C_3(K) \approx 1354$ and $S_1 ^\epsilon(t) \leq 104282t^{\delta/2}$. Computing $M_{K}$ we finally obtain an asymptotic upper bound for the remainder term of the Neumann counting function of the Koch snowflake of 
  \[N_N(K,t)-C_W ^{(n)}\vol_2(K)t \leq 104325{.}5t^{\delta/2}.\]
  Alternatively, an absolute upper bound can be found following Rem.~\ref{rem:countingfunctions}.\ref{item:allelambda}:
  \begin{align*}
      N_N(K,t) - C_W ^{(2)} \vol(K)t \leq C_W ^{(2)}\left(3.537 \cdot 10^6t^{\delta/2} -353 - 911\sqrt{t}\right).
  \end{align*}
  Since $\epsilon < 1/9$ is more than sufficient to satisfy Def.~\ref{def:wellcovered}, this holds true for all $t \geq 0.1$. Some limited computational results on the Neumann (or Dirichlet) spectrum of the Koch Snowflake can be found in \cite{NeubergerSiebenSwift2006,stwi2020}. A few estimations in the above application were deliberately chosen sub-optimally for the benefit of the presentation so that some improvements in these constants are expected to be possible.
\begin{rem}\label{rem:Rohde}
    Fix $p\in\left(\frac14,\frac12\right)$. By replacing each of the four sides of a unit square $S_0$ with either an outwards pointing copy of the curve $A_p$ depicted in Fig.~\ref{fig:Rohde-p}  or with $A_{1/4}$ one arrives at an object $S_1$ which is made up of $4\cdot 4$ intervals of lengths $p$ or $\frac14$. Applying either substitution individually to each of the resulting intervals, and iterating the procedure leads to an object $S_n$ which is made up of $4\cdot 4^n$ intervals. This construction is presented in \cite{rohde2001} and we will refer to the limiting objects as  \emph{general snowflake-domains of Rohde type}.
    For such snowflake-domains, estimates for Poincar\'{e} constants were obtained in \cite{goldstein2018}.
    The snowflake-domains $\Rohde$ that we consider in the present section arise as the special cases where the curve $A_p$ is chosen in every replacement step.
    \par
    Rohde showed in \cite{rohde2001} that the general snowflake-domains of Rohde type form an exhaustive family of quasicircles up to bi-Lipschitz maps. Besides a more involved computation of the upper inner Minkowski dimension and upper inner Minkowski content of the boundary,   the construction of a foliation and the arguments applied in the present section carry over to the general snowflake-domains of Rohde type for $p\in\left( \frac14,\frac{\sqrt{3}-1}{2} \right)$. The restriction $p<\frac{\sqrt{3}-1}{2}$ ensures that $\int_0^{\len\gamma} \beta(\gamma,t)dt<\infty$ in \eqref{eq:beta}. Since the above argument for Lem.~\ref{lem:ewestimate} and hence Thm.~\ref{thm:satz1} is not affected by bi-Lipschitz transformations (Rem.~\ref{rem:question}), knowledge of the upper inner Minkowski content would lead to estimates for the Neumann counting function for a large family of quasicircular domains. 
\end{rem}
  
  \section{Constants}\label{sec:constants}
  The following constants are used within this document. $\mathfrak{M}_{\Omega}$ and $A_{\Omega}$ are defined in Prop.~\ref{prop:whitneycardinality} and Prop.~\ref{prop:whitneyumfang}, respectively.
  \begin{align*}
   C_1(\Omega_{-\epsilon} ) 
   &:=\left[ c_E ^{-1} (c_r ^+)^2 \left(1+\frac{c_r ^+ + \sqrt{c_E c_L ^+ c_r ^-}}{c_r ^+ \inf_{i \in I_\epsilon} \beta_{\inf}} \frac{c_\mathcal{I} ^+}{c_r ^-} \right) + \frac{1+\frac{c_r ^+}{\sqrt{c_E c_L ^+ c_r ^-}}}{\inf_{i \in I_\epsilon} \beta_{\inf}} c_L ^+ c_{\mathcal{I}} ^+ \right]^{-1}\\
   C_2(\Omega) &:= \min \left\{ \inf_{\epsilon>0} C_1(\Omega_{-\epsilon}), \frac{1}{2}\left( \sqrt{n}\pi/c_{\diam} ^+ \right)^2 \right\}\\
   C_3(\Omega) &:= C(\Omega) + {\mathfrak{M}_{\Omega}} \frac{(40\sqrt{n})^\delta }{2^\delta-1} \\
   M_{\Omega} &:= C_3(\Omega)\left( \frac{\mu+1}{C_2(\Omega)}\right)^{\delta/2} + \frac{C_W ^{(n-1)}}{4}A_\Omega \left( \frac{\mu+1}{C_2(\Omega)}\right)^{\frac{\delta-(n-1)}{2}}.
  \end{align*}

\begin{thebibliography}{10}

\bibitem{ahlfors1963}
L.~V. Ahlfors, \emph{{Q}uasiconformal reflections}, Acta Mathematica \textbf{109} (1963), 291--301.

\bibitem{ahlfors2006}
\bysame, \emph{{L}ectures on {Q}uasiconformal {M}appings}, 2nd ed., University Lecture Series, vol.~38, Amer. Math. Soc., 2006.

\bibitem{ahrend2009}
W.~Ahrendt, R.~Nittka, W.~Peter, and F.~Steiner, \emph{{W}eyl’s {L}aw: {S}pectral {P}roperties of the {L}aplacian in {M}athematics and {P}hysics}, ch.~1, Wiley-VCH, 2009.

\bibitem{berry1979}
M.~V. Berry, \emph{{D}istribution of modes in fractal resonators}, Structural Stability in Physics (1979), 51--53.

\bibitem{berry1980}
\bysame, \emph{{S}ome geometric aspects of wave motion: wavefront dislocations, diffraction catastrophes, diffractals}, Proc. Symp. Pure Math. \textbf{36} (1980), 13--38, in 'Geometry of the Laplace Operator'.

\bibitem{brolin1965}
H.~Brolin, \emph{{I}nvariant sets under iteration of rational functions}, Arkiv för Matematik \textbf{6} (1965), no.~6, 103--144.

\bibitem{brca1986}
J.~Brossard and R.~Carmona, \emph{{C}an one hear the dimension of a fractal?}, Comm. Math. Phys. \textbf{104} (1986), 103--122.

\bibitem{chenslee2000}
W.~Chen and B.~D. Sleemann, \emph{{O}n nonisometric isospectral connected fractal domains}, Revista Matem\'{a}tica Iberoamericana \textbf{16} (2000), no.~2, 351--361.

\bibitem{CourantHilbert1924}
R.~Courant and D.~Hilbert, \emph{{M}ethoden der mathematischen {P}hysik {I}}, 1st ed., Die Grundlehren der mathematischen Wissenschaften, no. XII, Julius Springer, 1924.

\bibitem{teplyaev2022}
A.~Dekkers, A.~Rozanova-Pierrat, and A.~Teplyaev, \emph{{M}ixed boundary valued problems for linear and nonlinear wave equations in domains with fractal boundaries}, Calculus of Variations and Partial Differential Equations \textbf{61} (2022), 1--44.

\bibitem{SafarovFilonov2010}
N.~D. Filonov and Yu.~G. Safarov, \emph{{A}symptotic {E}stimates of the {D}ifference {B}etween the {D}irichlet and {N}eumann {C}ounting {F}unctions}, Functional Analysis and Its Applications \textbf{44} (2010), no.~4, 286--294.

\bibitem{goldstein2018}
V.~Gol’dshtein, V.~Pchelintsev, and A.~Ukhlov, \emph{{I}ntegral estimates of conformal derivatives and spectral properties of the {N}eumann-{L}aplacian}, J. Math. Anal. Appl. \textbf{463} (2018), 19--39.

\bibitem{gordonwebbwolpert1992}
C.~Gordon, D.~Webb, and S.~Wolpert, \emph{{I}sospectral plane domains and surfaces via {R}iemannian orbifolds}, Inventiones mathematicae \textbf{110} (1992), no.~1, 1--22.

\bibitem{hajlasz2008}
P.~Haj\l{}asz, P.~Koskela, and H.~Tuominen, \emph{{M}easure density and extendability of {S}obolev functions}, Rev. Mat. Iberoamericana \textbf{24} (2008), no.~2, 645--669.

\bibitem{hesesi1990}
R.~Hempel, L.~A. Seco, and B.~Simon, \emph{{T}he {E}ssential {S}pectrum of {N}eumann {L}aplacians on {S}ome {B}ounded {S}ingular {D}omains}, Journal of Functional Analysis \textbf{102} (1991), 448--483.

\bibitem{ivrii1980}
V.~Ivrii, \emph{{S}econd term of the spectral asymptotic expansion of the {L}aplace-{B}eltrami operator on manifolds with boundary}, Functional Analysis and Its Applications \textbf{14} (1980), no.~2, 98–106, Magnitogorsk Mining-Metallurgical Institute. Translated from Funktsional'nyi Analiz i Ego Prilozheniya, Vol. 14, No. 2, pp. 25-34, April-June, 1980. Original article submitted November 15, 1979.

\bibitem{ivrii2019I}
\bysame, \emph{{M}icrolocal {A}nalysis, {S}harp {S}pectral {A}symptotics and {A}pplications {I}}, Springer, 2019.

\bibitem{jones1981}
P.~W. Jones, \emph{{Q}uasiconformal {M}appings and {E}xtendability of {F}unctions in {S}obolev {S}paces}, Acta Mathematica \textbf{147} (1981), no.~1, 71--88.

\bibitem{kac1966}
M.~Kac, \emph{{C}an one hear the shape of a drum?}, Amer. Math. Monthly \textbf{73} (1966), no.~4.

\bibitem{DOKUMENT2}
S.~Kombrink and L.~Schmidt, \emph{Eigenvalue counting functions and parallel volumes for examples of fractal sprays generated by the koch snowflake}, preprint (2023).

\bibitem{antti2013}
A.~Käenmäki, J.~Lehrbäck, and M.~Vuorinen, \emph{{D}imensions, {W}hitney {C}overs, and {T}ubular {N}eighborhoods}, Indiana University Mathematics Journal \textbf{62} (2013), no.~6, 1861--1889.

\bibitem{la1991}
M.~L. Lapidus, \emph{{F}ractal drum, inverse spectral problems for elliptic operators and a partial resolution of the {W}eyl-{B}erry conjecture}, Trans. Amer. Math. Soc. \textbf{325} (1991), 465--529.

\bibitem{la1993}
\bysame, \emph{{V}ibrations of {F}ractal {D}rums, the {R}iemann {H}ypothesis, {W}aves in {F}ractal {M}edia and the {W}eyl-{B}erry {C}onjecture}, Ordinary and Partial Differential Equations IV (Dundee, Scotland, UK) (B.D. Sleeman and R.J. Jarvis, eds.), Proc. Twelth International Conference on 'The Theory of Partial Differential Equations', vol. 289, Longman Scientific and Technical, 1993, pp.~126--209.

\bibitem{LapidusPearse}
M.~L. Lapidus and E.~P.~J. Pearse, \emph{{A} tube formula for the {K}och snowflake curve, with applications to complex dimensions}, J. London Math. Soc \textbf{74} (2006), no.~2, 397--414.

\bibitem{lapo1993}
M.~L. Lapidus and C.~Pomerance, \emph{{T}he {R}iemann {Z}eta-{F}unction and the {O}ne-{D}imensional {W}eyl-{B}erry {C}onjecture for {F}ractal {D}rums}, Proceedings of the London Mathematical Society \textbf{s3-66} (1993), no.~1, 41--69.

\bibitem{lapo1996}
\bysame, \emph{{C}ounterexamples to the modified {W}eyl-{B}erry conjecture on fractal drums}, Mathematical Proceedings of the Cambridge Philosophical Society \textbf{119} (1996), 167--178.

\bibitem{ma1968}
V.~Maz'ya, \emph{{O}n {N}eumann's problem in domains with nonregular boundaries}, Sibirian Mathematical Journal \textbf{9} (1968), 990--1012.

\bibitem{mazya2011}
\bysame, \emph{{S}obolev {S}paces}, 2. ed., Grundlehren der mathematischen Wissenschaften, vol. 342, Springer, 2011.

\bibitem{netrusov2007}
Yu. Netrusov, \emph{{S}harp remainder estimates in the {W}eyl formula for the {N}eumann {L}aplacian on a class of planar regions}, Journal of Functional Analysis \textbf{250} (2007), no.~1, 21--41.

\bibitem{NetrusovSafarov2005}
Yu. Netrusov and Yu.~G. Safarov, \emph{{W}eyl {A}symptotic {F}ormula {F}or {T}he {L}aplacian on {D}omains {W}ith {R}ough {B}oundaries}, Commun. Math. Phys. \textbf{253} (2005), no.~1, 481--509.

\bibitem{NeubergerSiebenSwift2006}
J.~M. Neuberger, N.~Sieben, and J.~W. Swift, \emph{{C}omputing eigenfunctions on the {K}och {S}nowflake: {A} new grid and symmetry}, Journal of Computational and Applied Mathematics \textbf{191} (2006), no.~1, 126--142.

\bibitem{polya1960}
G.~P\'{o}lya, \emph{{O}n the {E}igenvalues of vibrating {M}embranes}, Proceedings of the London Math. Soc. \textbf{s3-11} (1961), no.~1, 419--433.

\bibitem{rellich1948}
F.~Rellich, \emph{{D}as {E}igenwertproblem von {$\Delta u + \lambda u = 0$}}, Interscience Publishers, Inc., 1948, in Studies and Essays Presented to R. Courant.

\bibitem{rickman1966}
S.~Rickman, \emph{{C}haracterisation of {Q}uasiconformal arcs}, Annales Academiae Scientiarum Fennicae -- Series A I. Mathematics \textbf{395} (1966), Suomalainen Tiedeakatemia.

\bibitem{rohde2001}
S.~Rohde, \emph{{Q}uasicircles modulo bilipschitz maps}, Rev. Mat. Iberoam. \textbf{17} (2001), 643--659.

\bibitem{rozenbljum1972}
G.~V. Rosenbljum, \emph{{O}n the eigenvalues of the first boundary value problem in unbounded domains}, Math. USSR-Sb \textbf{18} (1972), 235--248.

\bibitem{rozenbljum1976}
\bysame, \emph{{T}he computation of the spectral asymptotics for the laplace operator in domains of infinite measure}, J. Sov. Math. \textbf{6} (1976), 64--71.

\bibitem{vassilievsafarov1996}
Yu. Safarov and D.~Vassiliev, \emph{{T}he {A}symptotic {D}istribution of {E}igenvalues of {D}ifferential {O}perators}, Spectral theory of PDEs, XIX Winter Mathematical School, Voronezh, Russia, Jan. 1987.

\bibitem{safarovvassiliev1996}
\bysame, \emph{{T}he {A}symptotic {D}istribution of {E}igenvalues of {P}artial {D}ifferential {O}perators}, Translations of Mathematical Monographs, vol. 155, Amer. Math. Soc., 1996.

\bibitem{stein1970}
E.~M. Stein, \emph{{S}ingular {I}ntegrals and {D}ifferentiability {P}roperties of {F}unctions}, Princeton Univerity Press, 1970.

\bibitem{stwi2020}
R.~S. Strichartz and S.~C. Wiese, \emph{{S}pectral {P}roperties of {L}aplacians on {S}nowflake {D}omains and {F}illed {J}ulia {S}ets}, Experimental Mathematics (2020).

\bibitem{szego1954}
G.~Szeg\H{o}, \emph{{I}nequalities for {C}ertain {E}igenvalues of a {M}embrane of {G}iven {A}rea}, Journal of Rational Mechanics and Analysis \textbf{3} (1954), 343--356.

\bibitem{vandenBerg2001}
M.~van~den Berg and M.~Lianantonakis, \emph{{A}symptotics for the {S}pectrum of the {D}irichlet {L}aplacian on {H}orn-shaped {R}egions}, Indiana University Mathematics Journal \textbf{50} (2001), no.~1, 299--333.

\bibitem{vassiliev86}
D.~Vassiliev, \emph{{T}wo-{T}erm {A}symptotics of the {S}pectrum of a {B}oundary {V}alue {P}roblem in the {C}ase of a piecewise smooth {B}oundary}, Sov. Math. Dokl. \textbf{22} (1986), no.~1, 227--230, English translation from Uspekhi Mat. Nauk 40, no.5, p.233 (in Russian).

\bibitem{vodo1979}
S.~K. Vodop'yanov, V.~M. Gol'dshtein, and T.~G. Latfullin, \emph{{C}riteria of functions of the class ${L_2}^1$ from unbounded plane domains}, Siberian Mathematical Journal \textbf{20} (1979), 298--301.

\bibitem{weinberger1956}
H.~F. Weinberger, \emph{{A}n isoperimetric inequality for the n-dimensional free membrane problem}, Arch. Ration. Mech. Anal. \textbf{5} (1956), 633--636.

\bibitem{weyl1911}
H.~Weyl, \emph{{U}eber die asymptotische {V}erteilung der {E}igenwerte}, Nachrichten von der Königlichen Gesellschaft der Wissenschaften zu Göttingen (1911), 110--117 (German), Mathematisch Physikalische Klasse.

\bibitem{whitney1934}
H.~Whitney, \emph{{A}nalytic extensions of differentiable functions defined in closed sets}, Transactions of the Amer. Math. Soc. \textbf{36} (1934), no.~1, 63--89.

\end{thebibliography}
 \bibliographystyle{amsplain}
 \providecommand{\bysame}{\leavevmode\hbox to3em{\hrulefill}\thinspace}
\providecommand{\MR}{\relax\ifhmode\unskip\space\fi MR }

\end{document}